\title[Invariants of Finite Orthogonal Groups]{Invariants of Finite Orthogonal Groups of Plus Type in Odd Characteristic}
\author{H.E.A. Campbell, R. James Shank and David L. Wehlau}
\address{Department of Mathematics and Statistics, Queen's 
University, Kingston ON, Canada}
\email{eddy@unb.ca}
\address{School of Engineering, Mathematics and Physics, 
University of Kent,
Canterbury, United Kingdom}
\email{R.J.Shank@kent.ac.uk}
\address{Department of Mathematics and Computer Science, Royal Military College of Canada, Kingston ON, Canada}
\email{wehlau@rmc.ca}
\subjclass{Primary 13A50; Secondary 20F55}
\keywords{modular invariant theory, finite classical groups, orthogonal group}
\newcommand{\field}{\mathbb{F}}
\newcommand{\gl}[2]{{{\text{GL}_{{#1}}(\field_{#2})}}}
\newcommand{\orthp}[2]{{{\text{O}^{+}_{{#1}}(\field_{#2})}}}
\newcommand{\sorthp}[2]{{{\text{SO}^{+}_{{#1}}(\field_{#2})}}}
\newcommand{\invring}{S_m^{G_m}}
\newcommand{\PP}{\mathcal{P}}
\newcommand{\V}{\mathcal{V}}
\newcommand{\cS}{\mathcal{S}}
\newcommand{\F}{\mathcal{F}}
\newcommand{\HH}{\mathcal{H}}
\newcommand{\B}{\mathcal{B}}
\newcommand{\BH}{\mathcal{B}_H}
\newcommand{\A}{\mathcal{A}}
\newcommand{\R}{\mathcal{R}}
\newcommand{\T}{\mathcal{T}}
\newcommand{\Z}{\mathbb{Z}}
\newcommand{\tat}{t\^ete-a-t\^ete}
\newcommand{\binomial}[2]{\genfrac{(}{)}{0pt}{}{#1}{#2}}
\newtheorem{thm}[subsection]{Theorem}
\newtheorem{lem}[subsection]{Lemma}
\newtheorem{cor}[subsection]{Corollary}
\newtheorem{remark}[subsection]{Remark}
\newtheorem{example}[subsection]{Example}
\DeclareMathOperator{\Span}{Span}
\DeclareMathOperator{\lt}{LT}
\DeclareMathOperator{\LM}{LM}
\DeclareMathOperator{\image}{image}
\DeclareMathOperator{\sgn}{sgn}
\DeclareMathOperator{\Fr}{Fr}
\DeclareMathOperator{\tr}{Tr}
\DeclareMathOperator{\Cat}{{Cat}}
\newcommand{\wdeg}{{S\mbox{-}\deg}}
\newcommand{\rdeg}{{R\mbox{-}\deg}}
\begin{document}
\begin{abstract}
    We describe the rings of invariants for the finite orthogonal groups of plus type
in odd characteristic acting on the defining representations.  
We also describe the invariants of the corresponding Sylow subgroups in the defining characteristic.  
In both cases we construct minimal algebra generating sets and describe the relations among the generators.  
Both rings of invariants are shown to be complete intersections and thus are 
Cohen-Macaulay.  We expect the techniques we use will generalise to give a systematic 
computation for rings of invariants for all of the finite classical groups in odd characteristic.
\end{abstract}

\maketitle
\setcounter{tocdepth}{1}   
\tableofcontents


\section{Introduction}
The fundamental problem in the invariant theory of finite groups is to determine the ring of invariants 
of a representation of a finite group.  The celebrated theorem of Shepherd--Todd and Chevalley proves that
such a ring of invariants in characteristic zero is a polynomial ring if and only if the group action is 
generated by pseudo-reflections (group elements which fix pointwise a hyperplane). 
An excellent survey of the characteristic zero results and theory in characteristic zero
is the article by Stanley~\cite{stanley:79}.

In positive characteristic the situation is much more complex.
In 1911, L.E.~Dickson~\cite{dickson-fundsystinvagene:11} gave an explicit description of the ring of 
invariants of the general linear group over any finite field.  In the past few decades
there has been significant progress in determining the rings of invariants of the defining 
representations of the finite classical groups.  
The rings of invariants for the symplectic groups were computed by David Carlisle and Peter Kropholler in the 1990s, see 
\cite[\S 8.3]{benson-polyinvafinigrou:93}.
The invariants in the unitary case were computed by 
Huah Chu and Shin-Yao Jow \cite{Chu+Jow-polyinvunitary:06}.
For the orthogonal groups, while there are computations in a number of special cases (see 
\cite{chiang+hung-invorth:93},
\cite{chu-polyinvorth:01},
\cite{cohen-ratfuncortho:90},
\cite{Kropholler+Rajaei+Segal:05} and
\cite{smith-ringinvorth:99}
)
there is no general result.  
Almost all of the defining representations for these groups are generated by pseudo-reflections, but the rings of invariants are rarely polynomial rings. 

We work over the finite field $\field_q$ of order $q$ and characteristic $p$ where $p$ is an odd prime.
We compute the ring of invariants for the
finite orthogonal group $\orthp{2m}{q}$ of plus type
acting on its defining representation $V$ of dimension $n=2m$.  

As an intermediate result, we compute the ring of invariants for 
a Sylow $p$-subgroup. 
For both of these rings we construct a minimal set of algebra generators and describe
generators for the ideal of relations. 
Both of these rings are complete intersections, see Theorems~\ref{oplus_thm_v2} and \ref{sylow_thm}, and are thus Cohen-Macaulay.  We note that for modular 
representations, rings of invariants are rarely Cohen-Macaulay, let alone 
complete intersections.

  For a Sylow $p$-subgroup $P$ of $\orthp{2m}{q}$ we find a set $\HH_0$ consisting
of $n$ orbit products of linear forms such that 
$\HH_0$ forms a homogeneous system of parameters.  Using a family of 
$\orthp{2m}{q}$ invariants $\xi_1,\xi_2,\dots,\xi_{n-1}$, we show that
$$\field_{q}[V]^{P} = \bigoplus_{\gamma\mid\Gamma_0} \field_{q}[\HH_0]\gamma$$ 
where the sum is over all monomials $\gamma$ dividing 
$$\Gamma_0=\prod_{i=1}^{n-2} \xi_{i}^{e_i} \text{ where } e_i = q^{m-\lceil i/2\rceil}-1$$
(see Corollary~\ref{Sylow_basis}).
In particular, $\field_{q}[V]^{P}$is Cohen-Macaulay and this implies that
$\field_{q}[V]^{\orthp{2m}{q}}$ is Cohen-Macaulay.
We also give a Khovanskii basis for the Sylow invariants
(see the definition at the end of the next section).

We show that there are homogeneous orthogonal invariants 
$d_{1,m}, d_{2,m},\ldots,d_{m,m}$ such that
$\HH=\{\xi_1,\ldots,\xi_{m},d_{1,m},\ldots,d_{m,m}\}$
is a homogeneous system of parameters.   
Furthermore, we show that 
$$\field_{q}[V]^{\orthp{2m}{q}} = \bigoplus_{\gamma\in\B} \field_q[\HH]\gamma$$ 
where $\B$ is the set of monomials dividing 
$$\Gamma= \prod_{i=m+1}^{n-1}\xi_{i}^{q^{n-i}-1}$$
(see Theorem~\ref{oplus_thm_v2}).
For the orthogonal invariants, the ring is generated by two elements as 
an algebra over the Steenrod algebra (see Section \ref{st_sec} and 
Theorem \ref{st_alg_gen}).

Since the $P$-action is triangular,  there exists a finite Khovanskii basis for $\field_q[V]^{P}$ (see \cite{shank+wehlau-compmoduinva:02a}).  Hence we approximate $\field_q[V]^{P}$ using its lead term algebra.  However, in general, rings of invariants do not have finite Khovanskii bases.
For example, for a permutation representation, the ring of invariants has a finite 
Khovanskii basis if and only if the group is generated by reflections, see \cite{thiery+thomasse:04}.
We do not expect $\field_{q}[V]^{\orthp{2m}{q}}$ to have a finite Khovanskii basis.
Instead we approximate $\field_{q}[V]^{\orthp{2m}{q}}$ 
    by the associated graded algebra arising from a filtration given by a certain valuation.

For both the orthogonal group and its Sylow $p$-subgroup, we express the ring of invariants
as a free module over a homogeneous system of parameters with basis given by all monomial
factors of a single monomial.  Such a basis is called a block basis (see \cite{campbell+hughesetal-baseringcoin:96}).  
Since the rings are Cohen-Macaulay, having block
bases implies that these rings are both complete intersections. 

We believe that the methods we use here generalise to give a systematic computation for rings of invariants for all of the finite classical groups, at least for odd characteristic; this is work in progress. 
For background on the finite classical groups we suggest
\cite{taylor-classicalgroups:92}.
For background on invariant theory see 
\cite{benson-polyinvafinigrou:93},
\cite{Campbell+Wehlau:MIT11},
\cite{derksen+kemper-compinvatheo:02} or
\cite{Neusel+Smith-Invatheofinigrou:02}.

\section{The Setting}\label{setting_section}
For a symmetric bilinear form $\beta$ on a vector space $V$,
the orthogonal group ${\rm O}(\beta)$ is the subgroup of the the general linear group ${\rm GL}(V)$ consisting of linear transformations $g$
satisfying $\beta(gv,gu)=\beta(v,u)$ for all $u,v\in V$.
In this paper we work over the finite field $\field_q$
of order $q$ and characteristic $p$ where $p$ is an odd prime.

In this context, if the dimension of $V$ is even, there are two equivalence classes of symmetric non-degenerate
bilinear forms, one of plus type and one of minus type
(see Chapter 11 of \cite{taylor-classicalgroups:92}).
In this paper we focus on the plus type. 
We expect the generalisation of our methods to the minus type to be relatively straightforward. 

We denote the dimension of $V$ by $n=2m$.
The quadratic form $\xi_1$ associated to the bilinear form $\beta$ is defined by $\xi_1(v):=\beta(v,v)/2$ for $v\in V$.
We choose an ordered basis for $V$, 
$[e_m,e_{m-1}\ldots,e_1,f_1,f_2,\ldots, f_m]$, 
so that $\beta(e_i,f_i)=1$, $\beta(e_i,e_j)=\beta(f_i,f_j)=0$
and, if $i\not=j$, $\beta(e_i,f_j)=0$, in other words,
the basis consists of $m$ orthogonal hyperbolic pairs
(see pages 56 and 139 of \cite{taylor-classicalgroups:92}).
We note that $m$ is the Witt index and the basis vectors are isotropic.
We let $[y_m,y_{m-1},\ldots,y_1,x_1,x_2,\ldots,x_m]$ denote the dual basis and observe that the quadratic form is given by 
$$\xi_1=y_mx_m+\cdots +y_1x_1\ .$$

The left action of ${\rm GL}(V)$ on $V$ induces a right action on the dual $V^*$ given by $(\phi\cdot g)(v)=\phi(g\cdot v)$ for $\phi\in V^*$, 
$g\in {\rm GL}(V)$ and $v\in V$. This action extends to an action by algebra automorphisms on $\field_q[V]$, the symmetric algebra on $V^*$.
Choosing a basis for $V$ allows us to identify ${\rm GL}(V)$
with the matrix group $\gl{n}{q}$ and
identify $\field_q[V]$ with
the polynomial algebra on $n=2m$ variables
$$S_m:=\field_q[y_m,\ldots,y_1,x_1,\ldots,x_m]\ .$$
The right action of 
$\gl{n}{q}$ on $S_m$ is given by identifying $y_m$ with 
$[1\, 0 \,\cdots\, 0]$, $y_{m-1}$ with $[0\, 1\, 0 \,\cdots\, 0]$, etc.
The orthogonal group of plus type is 
$$\orthp{2m}{q}=\{g\in \gl{n}{q}\mid \xi_1\cdot g=\xi_1\}.$$
Its order is  given by 
$$|\orthp{2m}{q}| = 2 q^{m(m-1)}(q^m-1)\prod_{i=1}^{m-1}(q^{2i}-1),$$
see \cite[page 141]{taylor-classicalgroups:92}.
For notational simplicity, we write $G_m:=\orthp{2m}{q}$.
The ring of invariants is 
$$\invring=\{f\in S_m\mid f\cdot g=f, \,\forall g\in G_m\}.$$
Define an algebra map $\psi_{[m,j]}:S_m\to S_m$ by
$$\psi_{[m,j]}(a):=\prod_{u\in U_j}(a-u)$$
for $\deg(a)=1$, where $U_j = \Span_{\field_q}\{x_{m-j+1},\ldots,x_m \}$.
Note that $\psi_{[m,1]}(a)=a^q-ax_m^{q-1}$ and 
the height of $\ker(\psi_{[m,j]})$ is $j$.

Clearly $\xi_1\in \invring$.
For $i\geq 2$, define 
$$\xi_{i}:=y_m^{q^{i-1}}x_m+y_mx_m^{q^{i-1}}+\cdots +y_1^{q^{i-1}}x_1 +y_1x_1^{q^{i-1}}\ .$$
Since taking the $q^{th}$ power is $\field_q$-linear, $\xi_{i}\in \invring$. 
To avoid cluttering the notation we suppress the $m$ from the notation for the $\xi_{i}$.

For an integral domain $A$, let $\F(A)$ denote the field of fractions. The following is due to Carlisle and Kropholler \cite{carlisle+kropholler-ratiinvacertorth:92}.

\begin{thm} $\F(\invring)=\field_q(\xi_1,\ldots,\xi_{n})$.
\end{thm}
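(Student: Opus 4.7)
Set $K=\field_q(\xi_0,\ldots,\xi_{n-1})$, $L=\F(\invring)$, and $M=\F(S_m)$. Because every $\xi_i$ lies in $\invring$, we have the tower $K\subseteq L\subseteq M$. Since the action of $G_m$ on $S_m$ is faithful, the standard fact $[M:L]=|G_m|$ reduces the theorem to the inequality $[M:K]\le|G_m|$; combined with $[M:K]\ge[M:L]=|G_m|$ this will force $L=K$.

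I would first verify algebraic independence of $\xi_0,\ldots,\xi_{n-1}$ by computing the Jacobian with respect to the variables $(y_1,\ldots,y_m,x_m,\ldots,x_1)$. Because $q^i\equiv 0\pmod{p}$ for $i\ge 1$, every partial derivative collapses to a single monomial: $\partial\xi_i/\partial y_j=x_j^{q^i}$ and $\partial\xi_i/\partial x_j=y_j^{q^i}$ for $i\ge 1$, and $\partial\xi_0/\partial y_j=x_j$, $\partial\xi_0/\partial x_j=y_j$. After relabeling the columns, the Jacobian becomes the Moore matrix $(z_j^{q^i})_{0\le i\le n-1,\,1\le j\le n}$, whose determinant is nonzero because the $z_j$ are $\field_q$-linearly independent. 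Thus $M/K$ is finite, and the morphism $\Phi\colon V\to\mathbb{A}^n$ defined by $v\mapsto(\xi_0(v),\ldots,\xi_{n-1}(v))$ is generically \'etale, so that $[M:K]$ equals the size of a generic fibre of $\Phi$.

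The geometric content rests on the identity $\xi_i(v)=\beta(\sigma^iv,v)$ for $0\le i\le n-1$, where $\sigma$ denotes the Frobenius on $V\otimes_{\field_q}\overline{\field_q}$; this is immediate from the explicit formulas, since $y_j(\sigma^iv)=y_j(v)^{q^i}$ and $x_j(\sigma^iv)=x_j(v)^{q^i}$. Using $\field_q$-bilinearity of $\beta$, one obtains $\beta(\sigma^iv,\sigma^jv)=\sigma^{\min(i,j)}\bigl(\xi_{|i-j|}(v)\bigr)$ for $0\le i,j\le n-1$. Hence, for any $w\in V\otimes\overline{\field_q}$ with $\xi_i(v)=\xi_i(w)$ for all $i=0,\ldots,n-1$, the Gram matrices of $(\sigma^iv)_{i=0}^{n-1}$ and $(\sigma^iw)_{i=0}^{n-1}$ coincide. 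For generic $v$, the Moore determinant of the previous paragraph shows that $\{\sigma^iv\}_{i=0}^{n-1}$ is an $\overline{\field_q}$-basis of $V\otimes\overline{\field_q}$; since the common Gram matrix is non-degenerate, $\{\sigma^iw\}_{i=0}^{n-1}$ is a basis as well. The unique $\overline{\field_q}$-linear map $g$ sending $\sigma^iv\mapsto\sigma^iw$ is then a $\beta$-isometry. Once one shows that $g$ commutes with $\sigma$, then $g$ descends to a $\field_q$-linear isometry of $V$, i.e.\ an element of $\orthp{2m}{q}=G_m$ with $gv=w$; consequently the generic fibre of $\Phi$ is contained in $G_m\cdot v$, giving $[M:K]\le|G_m|$ and therefore $L=K$.

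The main obstacle is the commutativity $g\sigma=\sigma g$. This reduces to showing that the scalars $c_0,\ldots,c_{n-1}\in\overline{\field_q}$ appearing in the basis expansion $\sigma^nv=\sum_{i=0}^{n-1}c_i\sigma^iv$ also satisfy $\sigma^nw=\sum c_i\sigma^iw$; equivalently, $\xi_n$ must be expressible as a regular function on $V$ in terms of $\xi_0,\ldots,\xi_{n-1}$. This Cayley--Hamilton-type identity, forced by $\dim V=n$ together with the Frobenius origin of the $\xi_i$, is the essential technical step: once it is in hand, the fibre analysis above closes and the field-degree argument completes the proof.
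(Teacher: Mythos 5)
The paper does not prove this theorem at all: it is stated as Theorem~2.1 and attributed to Carlisle and Kropholler \cite{carlisle+kropholler-ratiinvacertorth:92}, so there is no internal proof to compare against. Your overall strategy — reduce to bounding $[\F(S_m):K]$ by $|G_m|$, compute the Moore-type Jacobian to get algebraic independence and generic \'etaleness, observe that the Gram matrix of $(\sigma^i v)_{i<n}$ with respect to $\beta$ is expressible through $\xi_0,\ldots,\xi_{n-1}$, and then build an isometry $g$ carrying one generic fibre point to another — is sound, and very much in the spirit of the Carlisle--Kropholler argument. All of the computations you do carry out (Jacobian entries, the identity $\beta(\sigma^i v,\sigma^j v)=\sigma^{\min(i,j)}\beta(\sigma^{|i-j|}v,v)$, basis-ness of $(\sigma^iv)$ and $(\sigma^iw)$) are correct, apart from the harmless factor of $2$ in $\beta(v,v)=2\xi_0(v)$.

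However, the final step — that $g$ commutes with $\sigma$, which you reformulate as a ``Cayley--Hamilton-type identity'' asserting that the coefficients $c_i$ in $\sigma^n v=\sum c_i\sigma^i v$ depend only on $\xi_0,\ldots,\xi_{n-1}$ — is genuinely not established in your write-up. This is not a minor bookkeeping point: showing that $c_i\in K$ implies, via $\xi_n(v)=\sum_i c_i\beta(\sigma^i v,v)$, that $\xi_n\in K$, which is already a nontrivial piece of the statement being proved; appealing to ``$\dim V=n$'' alone is circular. The gap can be closed, but it takes work: writing $\sigma(M_v)=CM_v$ for the companion matrix $C$ with last row $(c_0,\ldots,c_{n-1})$ and $M_v$ the matrix with rows $\sigma^i v$, take determinants to get $c_0=(-1)^{n-1}(\det M_v)^{q-1}$; since $q$ is odd and $(\det M_v)^2=\det\Gamma/\det B$ is a function of the Gram matrix $\Gamma$, this determines $c_0$ from $\xi_0,\ldots,\xi_{n-1}$. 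The remaining $c_1,\ldots,c_{n-1}$ are then pinned down by the linear part of the identity $\sigma(\Gamma)=C\Gamma C^T$, whose coefficient matrix is the lower-right $(n-1)\times(n-1)$ block of $\Gamma$, generically invertible since a generic codimension-one subspace of $V\otimes\overline{\field_q}$ is nondegenerate for $\beta$. Without something like this, your proposal is an outline with a real hole at exactly the step you flag as ``the essential technical step.''
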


Throughout, we order the variables taking
$y_m>y_{m-1}>\cdots >y_1>x_1>\ldots >x_m$.
There are many ways of extending this ordering to a monomial order on $S_m$. We will use both the lexicographic and the graded reverse lexicographic (referred to as grevlex) orders. 
For $f\in S_m$, we denote the lead term of $f$, with respect to whichever order is being considered, by $\lt(f)$.
For a subalgebra $A\subset S_m$, $\lt(A)$ denotes the algebra generated by the lead terms of the elements of $A$.
A Khovanskii basis for $A$ is a subset of $A$, say $\B$, such that
$\{\lt(f)\mid f\in \B\}$ is a generating set for $\lt(A)$.
A Khovanskii basis is a nice generating set for $A$.
Khovanksii bases were previously known as SAGBI bases.
For more on Khovanskii(SAGBI) bases see 
\cite[\S 5.1]{{Campbell+Wehlau:MIT11}}.

\section{Missing Invariants}

Define $\overline{V}:=V\otimes\overline{\field}_q$
where $\overline{\field}_q$ is the algebraic closure of
$\field_q$. For $X\subset S_m$, the variety associated to $X$ is given by
$$\V(X):=\{v\in \overline{V}\mid f(v)=0, \, \forall f\in X\} \subseteq \overline{V}.$$
Let $(\invring)_+$ denote the augmentation ideal, i.e., the set of  homogeneous invariants of positive degree. Since $G_m$ is a finite group, 
$\V((\invring)_+)=\{\underline{0}\}$.
Recall that $[e_m,\ldots,e_1,f_1,\ldots,f_m]$ is the ordered basis for $V$ 
dual to the ordered basis for $V^*$ given by $[y_m,\ldots,y_1,x_1,\ldots,x_m]$. 
In the following, we write
$\V(\xi_1,\ldots,\xi_{i})$ for 
 $\V(\{\xi_1,\ldots,\xi_{i}\})$.

\begin{thm}\label{orth_var} For $s\geq m$,
$$\V(\xi_1,\ldots,\xi_s)=\bigcup_{g\in G_m}
g\cdot\Span_{\overline{\field}_q}\{e_1,\ldots,e_m\}.$$
\end{thm}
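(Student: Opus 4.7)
The easy direction $\supseteq$ holds for any $s$: each $\xi_i$ lies in the ideal $(x_1,\ldots,x_m)\subset S_m$ by inspection, so it vanishes on $U:=\Span_{\overline{\field}_q}\{e_1,\ldots,e_m\}=\V(x_1,\ldots,x_m)$; by $G_m$-invariance it vanishes on each translate $g\cdot U$. Since $\V(\xi_0,\ldots,\xi_s)\subseteq\V(\xi_0,\ldots,\xi_{m-1})$ whenever $s\geq m-1$, it suffices to prove equality at $s=m-1$.

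Set $\V:=\V(\xi_0,\ldots,\xi_{m-1})$. The plan is a dimension and degree count. To bound $\dim\V\leq m$, use the linear projection $\pi\colon\overline{V}\to U$ killing the $f_k$-coordinates. For a generic $a=\sum a_k e_k\in U$ with $a_1,\ldots,a_m$ linearly independent over $\field_q$, the fiber $\pi^{-1}(a)\cap\V$ is cut out in $b\in\overline{\field}_q^m$ by polynomials whose linear parts in $b$ form the Moore matrix $(a_k^{q^i})_{i,k}$, nonsingular by Moore's theorem. A short argument (Bezout in $b$-space, bounding the fiber size by $\prod_i\deg_b(\xi_i)=q^{\binom{m}{2}}$) shows the fiber is then finite, so $\dim\V\leq m$. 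Combined with Krull's height theorem, $\dim\V=m$; moreover the same Jacobian calculation shows $U$ (and by $G_m$-equivariance each $g\cdot U$) is a smooth, multiplicity-one component of $\V$.

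Bezout in the ambient space now yields $\deg\V\leq\prod_{i=0}^{m-1}\deg\xi_i=2\prod_{i=1}^{m-1}(q^i+1)$. On the other hand, Witt's theorem over $\field_q$ shows $G_m$ acts transitively on the $\field_q$-rational maximal totally isotropic subspaces of $V$, and the standard count for a plus-type hyperbolic quadric of dimension $2m$ gives exactly $\prod_{i=0}^{m-1}(q^i+1)$ such subspaces, matching the Bezout bound. Since each translate $g\cdot U$ is an irreducible degree-one linear component of multiplicity one, these $N$ distinct translates must exhaust $\V$, proving the theorem. The main obstacle is the dimension estimate $\dim\V\leq m$ via the projection: this hinges on the generic nonvanishing of the Moore determinant and on verifying that the higher-degree-in-$b$ contributions from $\xi_i$ ($i\geq 1$) do not produce positive-dimensional fibers, which is handled by the complete-intersection property in $b$-space.
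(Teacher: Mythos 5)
Your argument is genuinely different from the paper's. The paper proves the nontrivial inclusion by an explicit recursive construction: given $v\in\V(\xi_0,\ldots,\xi_{m-1})$, it either reduces directly to $G_{m-1}$ (when $x_m(v)=0$) or first passes to $w=v-\Fr(v)$, uses the $\beta(u,\Fr^i u)$ identities to show $\xi_0(w)=\cdots=\xi_{m-2}(w)=0$, applies induction to $w$ to force the $x_j(gv)$ into $\field_q$, and then hands over explicit elements $h,\sigma\in G_m$ that clear the remaining coordinates. Your route is a non-constructive dimension-and-degree count: bound $\dim\V\leq m$ by generic finiteness of a linear projection, get $\dim\V=m$ from Krull, deduce $\deg\V\leq\prod_{i=0}^{m-1}(q^i+1)$ from Bezout for the complete intersection, show each translate $g\cdot U$ is a multiplicity-one linear component via the Moore-matrix Jacobian, and then observe that the number of $\field_q$-rational Lagrangians, $\prod_{i=0}^{m-1}(q^i+1)$, already saturates the bound. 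The paper's version is self-contained, characteristic-aware, and produces the group element moving $v$ into $W_m$; yours is shorter and more conceptual but leans on Bezout, unmixedness, the count of maximal totally isotropic subspaces, and generic smoothness.

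One step needs tightening. To conclude that the fiber over a generic $a=\sum a_k e_k$ is finite, nonsingularity of the \emph{linear} parts $\sum_k a_k^{q^i}b_k$ is not by itself enough, and Bezout ``in $b$-space'' only bounds the fiber once you already know it is $0$-dimensional. What you actually need is that the \emph{leading} forms $\sum_k a_k b_k^{q^i}$ ($i=0,\ldots,m-1$) have only the trivial common zero. This does hold: writing $L(b)=\sum_k a_k b_k$, these leading forms are $L(\Fr^i b)=\Fr^i\bigl(\sum_k a_k^{q^{-i}}b_k\bigr)$, so they vanish iff the twisted Moore matrix $(a_k^{q^{-i}})_{i,k}$ annihilates $b$, and that matrix is nonsingular exactly when $a_1,\ldots,a_m$ are $\field_q$-linearly independent -- the same genericity condition you already impose. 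With that replacement, the complete-intersection property of the fiber, hence its finiteness and the count $q^{\binom{m}{2}}$, follows, and the rest of your degree argument goes through.
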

\begin{proof} 
Observe that the variety $\V(\xi_1,\ldots,\xi_s)$ is closed under scalar multiplication 
since it is defined by homogeneous polynomials.
Let $W_m$ denote 
$\Span_{\overline{\field}_q}\{e_1,\ldots,e_m\}$.
Since $\xi_{i}(v)=0$ for $v\in W_m$ and $\xi_{i}\in \invring$, 
$$\bigcup_{g\in G_m}g\cdot W_m\subseteq \V(\xi_1,\ldots,\xi_s).$$ 
Therefore, it is sufficient to prove that for 
$v\in \V( \xi_1,\ldots,\xi_{m})$, there exists $g\in G_m$ such that
$gv\in W_m$.

The proof is by induction on $m$. For $m=1$, we have $x_1(v)y_1(v)=0$. If $x_1(v)\not=0$, then take $g$ to be the transposition which interchanges $x_1$ and $y_1$.

For $m>1$, define $\bar{v}:=v-y_m(v)e_m-x_m(v)f_m$, let
$\bar{\xi_{i}}$ denote the restriction of $\xi_{i}$ to 
$\Span_{\overline{\field}_q}\{e_1,\ldots,e_{m-1},f_{m-1},\ldots f_1\}$  and
identify $G_{m-1}$ with the point-wise stabiliser of  
${\rm Span}_{\field_q}\{e_m,f_m\}$ in $G_m$.

If $x_m(v)=0$ then $\bar{\xi_{i}}(\bar{v})=0$ for $i=1,\ldots,m$.
By induction, there is an element $g\in G_{m-1}< G_m$
with $g\bar{v}\in W_{m-1}$. Observe that 
$gv\in W_m$.

For $u\in \overline{V}$ we use $\Fr^i(u)$ to denote the $i^{th}$ iteration of the Frobenius map on $u$.
Recall that $\beta(u,u)=2\xi_1(u)$ and note that, for $i\geq2$,
$\beta(u,\Fr^{i-1}(u))=\xi_{i}(u)$.

If $x_m(v)\not=0$ then we scale $v$ so that $x_m(v)=1$ and define $w:=v-\Fr(v)$.
Observe that
\begin{eqnarray*}
\xi_1(w)&=&
\beta(v-\Fr(v),v-\Fr(v))/2
=\left(\beta(v,v)+\beta(\Fr(v),\Fr(v))-2\beta(v,\Fr(v))\right)/2\\
   &=&\xi_1(v)+\xi_1(v)^q-\xi_2(v)=0
\end {eqnarray*}
and, for $i\geq2$,
\begin{eqnarray*}
\xi_{i}(w)&=&\beta(v-\Fr(v),\Fr^{i-1}(v-\Fr(v)))=\beta(v-\Fr(v),\Fr^{i-1}(v)-\Fr^{i}(v))\\
&=&\beta(v,\Fr^{i-1}(v))-\beta(\Fr(v),\Fr^{i-1}(v))-\beta(v,\Fr^{i}(v))+\beta(\Fr(v),\Fr^{i}(v))\\
   &=&\xi_{i}(v)-\xi_{i-1}(v)^q-\xi_{i+1}(v)+\xi_{i}(v)^q=0.
\end {eqnarray*}
Therefore $\xi_{i}(w)=0$ for $i=1,\ldots,m-1$.
Since $x_m(w)=0$,
we have $\bar{\xi_{i}}(\bar{w})=0$ for $i=1,\ldots, m-1$ and so by induction
there is an element $g\in G_{m-1}<G_m$
with $g\bar{w}\in W_{m-1}$ and $gw\in W_m$.
Hence $(x_j-x_j^q)(gv)=x_j(gw)=0$ for $j=1,\ldots,m$ and $x_j(gv)\in\field_q$.
Since $g$ stabilises ${\rm Span}_{\field_q}\{e_m,f_m\}$, we have $x_m(gv)=1$.
For convenience, define $c_j=x_j(gv)\in\field_q$ for $j=1,\ldots,m-1$ and
let $h$ denote the linear transformation
given by $y_jh=y_j$ for $j=1,\ldots,m-1$, 
and $y_mh=y_m+\sum_{j=1}^{m-1} c_jy_j$, and $x_jh=x_j-c_jx_m$ for $j=1,\ldots,m-1$
and $x_mh=x_m$. Observe that $h\in G_m$
and, since  $\xi_1(gv)=0$, we have $y_m(hgv)=y_m(gv)+\sum_{j=1}^{m-1} c_jy_j(gv)=0$.
Let $\sigma $ denote the transposition which exchanges $x_m$ and $y_m$. We then have
$x_m(\sigma h g v)=0$ and we can apply the induction argument as above.
\end{proof}

It follows from this theorem that we are missing at least $m$ invariants.

\begin{remark} A vector $v\in\overline{V}$ is isotropic if $\beta(v,v)=0$.
Since $q$ is odd, $\xi_1(v)=\beta(v,v)/2$. Therefore the set of isotropic vectors
in $\overline{V}$ is $\V(\xi_1)$. For $m>1$, using Theorem~\ref{orth_var}, 
$\V(\xi_1,\ldots,\xi_m)$ is a proper subset of $\V(\xi_1)$.
The group $G_m$ acts transitively on the isotropic vectors of $V$;
for $m>1$ this follows from \cite[Lemma 11.27]{taylor-classicalgroups:92}.
Therefore $G_me_1=\V(\xi_1)\cap V$. It then follows from Theorem~\ref{orth_var}
that the set of isotropic vectors in $V$ is
$$\V(\xi_1,\ldots,\xi_m)\cap V=\bigcup_{g\in G_m}
g\cdot\Span_{{\field}_q}\{e_1,\ldots,e_m\}.$$
\end{remark}

\subsection{Steenrod Operations} \label{st_sec}

The Steenrod operations are cohomology operations widely used in algebraic topology.
They can be sensibly defined on $S_m$ and restrict
to operations on $\invring$,  see, for example, \cite{smith-algintrosteenrod:07}. 
Thus applying a Steenrod operation to an invariant produces a potentially new invariant.
The {\it complete Steenrod operator} $\PP(t):S_m\to S_m[t]$ is the algebra homomorphism determined by $\PP(t)(v)=v+v^q t$ for $v$ homogeneous of degree one. Since the map is linear in degree one, $\PP(t)$ is well-defined. For $f$ homogeneous of degree $d$, the Steenrod operations 
$\PP^i(f)$ are defined by
$$\PP(t)(f)=\sum_{i=0}^d\PP^i(f)t^i.$$
Note that for $i>d$ or $i<0$, $\PP^i(f)=0$.
It is clear that $\PP^0(f)=f$ and $\PP^d(f)=f^q$,
i.e., the {\it stability} property is satisfied.
The Steenrod operations satisfy the {\it Cartan identity}:
for $f_1,f_2\in S_m$
$$\PP^i(f_1f_2)=\sum_{j=0}^i \PP^{j}(f_1)\PP^{i-j}(f_2).$$
The Steenrod operations also satisfy the {\it Adem relations}: 
for $i<qj$
$$\PP^i\PP^j=\sum_k (-1)^{i+k}
\binom{(q-1)(j-k) -1}{i-qk}\PP^{i+j-k}\PP^{k}.$$

We can extend the action of $\gl{n}{q}$ to $S_m[t]$
by taking $tg=t$ for all $g$. Using this action,
since taking a $q^{th}$ power is linear in $S_m$, 
we see that $\PP(t)$ commutes with the $\gl{n}{q}$-action
and $\PP^ig=g\PP^i$ for all $i$.

The following lemma is a consequence of the Cartan identity.

\begin{lem}\label{steenrod on q powers} For $f\in S_m$, we have $\PP^i(f^q)=0$ unless $q$ divides $i$,
in which case $\PP^i(f^q)=(\PP^{i/q}(f))^q$.
\end{lem}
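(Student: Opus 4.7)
The plan is to exploit the fact that $\PP(t)$ is, by definition, an algebra homomorphism $S_m \to S_m[t]$, and then combine this with the additivity of the Frobenius map in characteristic $p$. The Cartan identity quoted in the statement is exactly the coefficient-by-coefficient expression of multiplicativity of $\PP(t)$, so invoking it is legitimate here.

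First I would observe that since $\PP(t)$ is an algebra homomorphism,
\[
\PP(t)(f^q) = \PP(t)(f)^q = \Bigl(\sum_{j=0}^d \PP^j(f)\, t^j\Bigr)^q,
\]
where $d=\deg f$. Next, since $q$ is a power of $p$ and $S_m[t]$ has characteristic $p$, the $q$-th power map is $\field_q$-linear (the Frobenius is additive, and $t^q$ commutes with all coefficients), so the right-hand side equals
\[
\sum_{j=0}^d \bigl(\PP^j(f)\bigr)^q t^{jq}.
\]
On the other hand, the left-hand side unpacks as $\sum_{i\geq 0} \PP^i(f^q)\, t^i$ by the definition of the Steenrod operations.

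Finally I would compare coefficients of $t^i$ in the two expressions. Only the exponents $i = jq$ occur on the right, so $\PP^i(f^q) = 0$ whenever $q \nmid i$, and when $i = jq$ we read off $\PP^i(f^q) = (\PP^{i/q}(f))^q$, as claimed.

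There is essentially no obstacle here: the only point requiring minor care is justifying that the $q$-th power distributes over the sum $\sum_j \PP^j(f)\, t^j$ in $S_m[t]$, which uses that $q$ is a power of the characteristic so that all cross terms in the multinomial expansion vanish. Everything else is a direct comparison of coefficients.
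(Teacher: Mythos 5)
Your proof is correct and is essentially the route the paper has in mind: the paper cites the lemma as ``a consequence of the Cartan identity'' without writing out the argument, and the Cartan identity is precisely the coefficient-by-coefficient form of the multiplicativity of $\PP(t)$ that you use directly. Applying the $q$-th power homomorphism (Frobenius) to the generating function and comparing coefficients is exactly the intended argument, cleanly packaged.
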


\begin{lem}\label{steenrod on linear forms}
    Suppose $v,f\in S_m$ with $v$ homogeneous of degree one.
    Then $v$ divides $\PP^i(vf)$ for all $i$. Moreover, if $f$ is a product of pairwise relatively prime linear forms, then $f$ divides $\PP^i(f)$.
\end{lem}
\begin{proof}
    By definition, $v$ divides $\PP^j(v)$. Therefore, using the Cartan identity, $v$ divides $\PP^i(vf)$.  The second assertion follows.
\end{proof}

 \begin{cor} \label{com_st}(a) $\PP(t)(\xi_1)=\xi_1+\xi_2 t+\xi_1^q t^2$.\\
 (b) $\PP(t)(\xi_2)=\xi_2+2\xi_1^q t+\xi_3 t^q+\xi_2^q t^{q+1}$.\\
(c) For $i\geq 3$, 
 $\PP(t)(\xi_{i})=\xi_{i}+\xi_{i-1}^q t+\xi_{i+1} t^{q^{i-1}}+\xi_{i}^q t^{q^{i-1}+1}$.
 \end{cor}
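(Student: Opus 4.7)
The plan is to exploit the two structural features of $\PP(t)$: it is an algebra homomorphism, and it commutes with $q$th powers. Since every summand of every $\xi_i$ has the form $y_k^{q^a} x_k^{q^b}$ with $a+b=i$, the entire computation reduces to evaluating $\PP(t)$ on a single such monomial and then summing.

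First I would record the formula $\PP(t)(v^{q^i}) = v^{q^i} + v^{q^{i+1}} t^{q^i}$ for a degree-one $v$, obtained by raising $\PP(t)(v) = v + v^q t$ to the $q^i$th power and using Frobenius (this is essentially the preceding lemma). Applying this to $y$ and $x$ separately and multiplying gives, for any $a,b \ge 0$,
\begin{equation*}
\PP(t)(y^{q^a} x^{q^b}) = y^{q^a}x^{q^b} + y^{q^a}x^{q^{b+1}}t^{q^b} + y^{q^{a+1}}x^{q^b}t^{q^a} + y^{q^{a+1}}x^{q^{b+1}}t^{q^a+q^b}.
\end{equation*}

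Part (a) follows by specialising to $a=b=0$, summing over the hyperbolic pairs $(y_k,x_k)$, and recognising $\sum_k(y_k^q x_k + y_k x_k^q) = \xi_1$ and $\sum_k y_k^q x_k^q = \xi_0^q$. For parts (b) and (c), I would apply the formula to each of the two kinds of summands $y_k^{q^i}x_k$ (with $(a,b)=(i,0)$) and $y_k x_k^{q^i}$ (with $(a,b)=(0,i)$) in $\xi_i$ and add. The coefficients of $1$, $t^{q^i}$, and $t^{q^i+1}$ immediately combine to give $\xi_i$, $\xi_{i+1}$, and $\xi_i^q$, respectively, using $\xi_{i+1} = \sum_k(y_k^{q^{i+1}}x_k + y_k x_k^{q^{i+1}})$ and the Frobenius identity on $\xi_i$.

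The one place that needs care, and the only real subtlety in the argument, is the coefficient of $t$. For $i \ge 2$ it is $\sum_k(y_k^{q^i}x_k^q + y_k^q x_k^{q^i}) = \bigl(\sum_k(y_k^{q^{i-1}}x_k + y_k x_k^{q^{i-1}})\bigr)^q = \xi_{i-1}^q$, giving (c). For $i=1$, however, both cross terms collapse to $y_k^q x_k^q$, producing the coefficient $2\sum_k y_k^q x_k^q = 2\xi_0^q$ rather than $\xi_0^q$; this accounts for the anomalous factor of $2$ in (b) and is the reason (b) must be stated separately from (c).
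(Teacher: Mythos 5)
Your proposal is correct and is exactly the direct computation that the paper implicitly intends (the corollary is stated without proof). You apply the algebra-homomorphism property of $\PP(t)$ together with Frobenius to each summand $y_k^{q^a}x_k^{q^b}$ of $\xi_i$, and you correctly isolate the one subtlety: for $i=1$ the two cross terms in $t$ coincide as $y_k^qx_k^q$, producing the factor $2\xi_0^q$ and explaining why (b) is stated separately from (c).
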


\begin{lem} \label{sub_max} Suppose $b\in S_m$ is homogeneous of degree $j>0$ and $a\in S_m$ is homogeneous of degree $i>0$.\\
 (a) $\PP^{i+j-1}(ab)=a^q\PP^{j-1}(b)+b^q\PP^{i-1}(a)$.\\
 (b) $\PP^{j+1}(\xi_1b)=\xi_2b^q+\xi_1^q\PP^{j-1}(b)$.\\
 (c) $\PP^{j+3}(\xi_1^2b)=2\xi_1^q\xi_2b^q+\xi_1^{2q}\PP^{j-1}(b)$.\\
 (d) $\PP^{2q+j-1}(\xi_1^qb)=\xi_1^{q^2}\PP^{j-1}b$.
 \end{lem}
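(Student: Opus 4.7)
The plan is to prove (a) directly from the Cartan identity, and then derive (b), (c), (d) as specialisations by computing the relevant Steenrod operations on powers of $\xi_0$.

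For (a), the Cartan identity gives
\[
\PP^{i+j-1}(ab)=\sum_{k=0}^{i+j-1}\PP^k(a)\PP^{i+j-1-k}(b).
\]
Since $a$ is homogeneous of degree $i$ and $b$ of degree $j$, the stability property forces $\PP^k(a)=0$ for $k>i$ and $\PP^{i+j-1-k}(b)=0$ for $k<i-1$. Only $k=i-1$ and $k=i$ survive, yielding $\PP^{i-1}(a)\cdot b^q + a^q\cdot\PP^{j-1}(b)$ after applying stability at the top term on each side. This is exactly (a).

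Parts (b), (c), (d) are then immediate once we compute $\PP^{i-1}$ of the appropriate power of $\xi_0$. For (b), $a=\xi_0$ has degree $i=2$, and $\PP^1(\xi_0)=\xi_1$ by Corollary~\ref{com_st}(a); substituting into (a) gives the claim. For (c), $a=\xi_0^2$ has degree $i=4$, and a second application of the Cartan identity (together with $\PP^0(\xi_0)=\xi_0$, $\PP^1(\xi_0)=\xi_1$, $\PP^2(\xi_0)=\xi_0^q$ and $\PP^3(\xi_0)=0$) gives
\[
\PP^3(\xi_0^2)=2\,\PP^1(\xi_0)\PP^2(\xi_0)=2\xi_1\xi_0^q,
\]
and substituting into (a) gives (c). For (d), $a=\xi_0^q$ has degree $i=2q$, and by the preceding lemma $\PP^{2q-1}(\xi_0^q)=0$ since $q\nmid 2q-1$; substituting into (a) leaves only the $\xi_0^{q^2}\PP^{j-1}(b)$ term.

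There is no real obstacle here — every step is a direct calculation once (a) is in hand, and (a) itself is a one-line degree-counting argument in the Cartan identity. The only point requiring a small amount of care is the coefficient $2$ in (c), which comes from the two symmetric summands $\PP^1(\xi_0)\PP^2(\xi_0)$ and $\PP^2(\xi_0)\PP^1(\xi_0)$ in the Cartan expansion of $\PP^3(\xi_0\cdot\xi_0)$; this is where the hypothesis that $p$ is odd is implicitly used, since otherwise the coefficient would vanish.
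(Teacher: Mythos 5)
Your proposal is correct and follows essentially the same approach as the paper: part (a) from the Cartan identity together with stability, and parts (b)--(d) by specialising (a) to $a=\xi_0,\xi_0^2,\xi_0^q$ and using the known values $\PP^1(\xi_0)=\xi_1$, $\PP^2(\xi_0)=\xi_0^q$ (Corollary~\ref{com_st}) and $\PP^{2q-1}(\xi_0^q)=0$. The paper's proof is a one-line citation of these same ingredients, so you have merely filled in the routine details; the remark about the coefficient $2$ surviving because $p$ is odd is a correct (if incidental) observation.
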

 
 \begin{proof} Part (a) follows from the Cartan identity and the stability condition. Parts (b), (c) and (d) follow from (a) and Corollary \ref{com_st}.
 \end{proof}

\subsection{New Invariants}
Let $P_m$ denote the intersection of $G_m$ with the upper triangular unipotent subgroup of $\gl{n}{q}$. 
$P_m$ is a choice of Sylow $p$-subgroup for $G_m$.
For $v\in S_m$ homogenous of degree one, define $N(v)$ to be the orbit product of $v$ over $P_m$. Since $x_m\in S_m^{P_m}$, $N(x_m)=x_m$.
We will see in Section \ref{sylow_section} that 
$\deg(N(x_i))=q^{m-i}$ and $\deg(N(y_i))=q^{m+i-2}$.
Define $$u_m:=\prod_{i=1}^m N(y_i)N(x_i)$$
and observe that $\deg(u_m)=(q^{m-1}+1)(1+q+\cdots +q^{m-1})$.  
\begin{remark}
Observe that the linear factors of $u_m$ lie in a single
$G_m$-orbit. Furthermore, that orbit consists of the non-zero scalar
multiples of the linear factors of $u_m$ 
(see the proof of Theorem~\ref{min_poly_thm}).
\end{remark}

Since $u_m$ is a product of pairwise relatively  prime linear forms, Lemma~\ref{steenrod on linear forms}
implies that $u_m$ divides $\PP^j(u_m)$ in $S_m$.
Define $e(i,m):=\sum_{j=1}^i q^{n-1-j}$ with $n=2m$  and, for $1\leq i \leq m$, define $d_{i,m}\in S_m$ by 
$$u_m d_{i,m}=\PP^{e(i,m)}(u_m ).$$
For convenience, define $d_{0,m}=1$. Observe that $e(i,m)=q^{n-2}+q e(i-1,m-1)$ and that the degree of $d_{i,m}$ is $q^{n-1}-q^{n-1-i}$.

Let $\psi:S_m\to S_m[t]$ denote the algebra homomorphism determined by $\psi(v)=v^q-vt^{q-1}$ for $v$ homogeneous of degree one.
Since the map is linear in degree one, $\psi$ is well-defined.
By comparing $\psi$ and the complete Steenrod operator $\PP(t)$, we see that, for $f$ homogeneous of degree $d$,
$$\psi(f)=\sum_{\ell=0}^d\PP^{d-\ell}(f)(-t^{q-1})^{\ell}.$$
Since $u_m$ divides $\PP^i(u_m)$ in $S_m$, we see that $\psi(u_m)/u_m\in S_m[t]$ is monic with degree $(q-1)\deg(u_m)$ as a polynomial in $t$.

\begin{thm} \label{min_poly_thm} $\psi(u_m)/u_m$ is the minimal polynomial of $x_m$ over the field $\F(S_m^{G_m})$.
In particular, $d_{i,m}\in S_m^{G_m}$ for $i=1,\ldots,m$.
\end{thm}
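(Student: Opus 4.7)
The plan is to factor $\psi(u_m)/u_m$ completely as a polynomial in $t$, identify its roots with the $G_m$-orbit of $x_1$, and then apply the standard Galois theory of the extension $\F(S_m)/\F(S_m^{G_m})$ to conclude both that this polynomial is the minimal polynomial of $x_1$ and that its coefficients (including the $d_{i,m}$) are invariant.

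Since $\psi$ is an algebra homomorphism and $u_m$ factors as a product of distinct linear forms $\ell$, I would first write
\[
\psi(u_m) = \prod_{\ell} \psi(\ell) = \prod_{\ell} \ell\,(\ell^{q-1} - t^{q-1}) = u_m\prod_{\ell}(\ell^{q-1} - t^{q-1}),
\]
so $\psi(u_m)/u_m = \prod_{\ell}(\ell^{q-1} - t^{q-1})$. Applying the factorization $t^{q-1} - c^{q-1} = \prod_{\zeta\in\field_q^*}(t-\zeta c)$ over $\field_q$ refines this to
\[
\psi(u_m)/u_m = (-1)^{\deg u_m}\prod_{\ell}\prod_{\zeta\in\field_q^*}(t-\zeta\ell),
\]
which is monic of degree $(q-1)\deg u_m$ in $t$; the sign is $+1$ because $q$ odd makes $q^{m-1}+1$ even, and this is a factor of $\deg u_m = (q^{m-1}+1)(1+q+\cdots+q^{m-1})$.

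By the Remark preceding the theorem, the set of products $\zeta\ell$ as $\ell$ ranges over the linear factors of $u_m$ and $\zeta$ over $\field_q^*$ is exactly the set $\mathcal{I}^{\times}$ of nonzero isotropic vectors in $V^*$; a direct count confirms each such vector is hit exactly once, since $(q-1)\deg u_m = (q^m-1)(q^{m-1}+1)$ equals the known number of nonzero isotropic vectors in a $2m$-dimensional plus-type space. Witt's extension theorem then gives that $G_m$ acts transitively on $\mathcal{I}^{\times}$, and since $x_1$ is a nonzero isotropic form, its $G_m$-orbit is all of $\mathcal{I}^{\times}$. Now $\F(S_m)$ is Galois over $\F(S_m^{G_m})$ with group $G_m$, so the minimal polynomial of $x_1$ is $\prod_{g\in G_m/\mathrm{Stab}(x_1)}(t-g\cdot x_1)$, which coincides with $\psi(u_m)/u_m$ as computed above. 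The coefficients lie in $\F(S_m^{G_m})\cap S_m = S_m^{G_m}$; reading off the coefficient of $t^{(q-1)(\deg u_m - e(i,m))}$ in the expansion $\psi(u_m)/u_m = \sum_j (-1)^j (\PP^{d-j}(u_m)/u_m)\, t^{(q-1)j}$ recovers $d_{i,m}$ up to sign, hence each $d_{i,m}$ is invariant.

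The main obstacle is the geometric/combinatorial identification in the preceding Remark: verifying that the linear factors of $u_m$, together with their $\field_q^*$-multiples, exhaust all nonzero isotropic directions in $V^*$. Once that is established (via a count matched with the orbit structure of the Sylow $p$-subgroup $P_m$ on isotropic forms), the Galois-theoretic step is essentially formal, requiring only care with the sign $(-1)^{\deg u_m}$ and the index-shift between $\PP^j(u_m)/u_m$ and the coefficient of $t^{(q-1)(\deg u_m - j)}$.
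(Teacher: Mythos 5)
Your proof is correct and follows essentially the same route as the paper's: show that $\psi(u_m)/u_m$ is a monic polynomial in $t$ whose roots are precisely the nonzero scalar multiples of the linear factors of $u_m$, identify this set with the $G_m$-orbit of $x_1$, and conclude by Galois theory that it is the minimal polynomial with coefficients in $S_m^{G_m}$. The only deviation is in how the orbit is identified: you count nonzero isotropic vectors and invoke Witt's extension theorem for transitivity, whereas the paper computes the orbit size directly from the index of the point stabiliser $G_{m-1}H$ of $x_1$; both are standard computations yielding $(q-1)\deg(u_m)$, and both rest on the geometric fact recorded in the Remark preceding the theorem that the factors of $u_m$ exhaust the isotropic directions.
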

\begin{proof} If we identify $G_{m-1}$ with the pointwise stabiliser of $\{y_m,x_m\}$ in $G_m$, then the pointwise stabiliser of $x_m$ coincides
with $G_{m-1}H$, where $H$ is the associated Hook group (see Section \ref{HgroupSec}). From this we see that the size the orbit of $x_m$ is
$$\frac{|\orthp{n}{q}|}{|\orthp{n-2}{q}| |H|}=\frac{(q^m-1)(q^{2m-2}-1)}{q^{m-1}-1}=(q^m-1)(q^{m-1}+1)=(q-1)\deg(u_m)$$
and this orbit consists of the non-zero scalar multiples of the linear factors of $u_m$. If $v$ is a linear factor of $u_m$, then the roots of $\psi(v)$ are the non-zero scalar multiples of $v$. From this, using the fact that $\psi(u_m)/u_m$ is monic of degree $(q-1)\deg(u_m)$,
 we conclude that 
$\psi(u_m)/u_m=\prod\{t-x_m g\mid g\in G_m\}\in S_m^{G_m}[t]$. Therefore, $\psi(u_m)/u_m$ is the minimal polynomial of $x_m$ over $\F(\invring)$ and, since the $d_{i,m}$
are coefficients of $\psi(u_m)/u_m$ up to sign, we have $d_{i,m}\in S_m^{G_m}$.
\end{proof}

\begin{lem}\label{lex_lt_lem} Using the lexicographic order on $S_m$, 
$$\lt(u_m)=\prod_{j=1}^m y_j^{q^{m+j-2}}x_j^{q^{m-j}}$$ 
and $\lt(d_{1,m})=y_m^{q^{n-1}-q^{n-2}}$.
\end{lem}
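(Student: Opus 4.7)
The plan is to combine the explicit linear factorization of $u_m$ with a Cartan-style expansion of $\PP^{q^{n-2}}(u_m)$. First I would pin down the lex lead of each individual factor in $N(y_i)$ and $N(x_i)$. Because $P_m$ sits inside the upper triangular unipotent subgroup of $\gl{n}{q}$, and because under the paper's convention $y_k$ is identified with the $k$-th standard row vector while $x_k$ corresponds to the $(n{+}1{-}k)$-th, for $g\in P_m$ one has $y_k\cdot g=y_k+(\text{terms in variables strictly later in the order})$ and analogously $x_k\cdot g=x_k+(\text{terms in }x_{k-1},\ldots,x_1)$. Hence every linear factor of $N(y_i)$ has lex lead $y_i$ and every factor of $N(x_i)$ has lex lead $x_i$. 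Multiplying, using the stated orbit sizes $q^{n-i-1}$ and $q^{i-1}$, yields $\lt(u_m)=\prod_{i=1}^m y_i^{q^{n-i-1}}x_i^{q^{i-1}}$.

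For the second assertion, I would write $u_m=\prod_j\ell_j$ with the $\ell_j$ its linear factors. Iterating $\PP(t)(\ell_j)=\ell_j+\ell_j^q t$ across the product gives
$$\PP^{q^{n-2}}(u_m)=\sum_{|S|=q^{n-2}}\prod_{j\in S}\ell_j^q\prod_{j\notin S}\ell_j=u_m\sum_{|S|=q^{n-2}}\prod_{j\in S}\ell_j^{q-1}.$$
Since $e(1,m)=q^{n-2}$, dividing by $u_m$ yields the explicit formula
$$d_{1,m}=\sum_{|S|=q^{n-2}}\prod_{j\in S}\ell_j^{q-1}.$$
The lex lead of the $S$-summand is $\prod_{j\in S}\lt(\ell_j)^{q-1}$, and among all size-$q^{n-2}$ subsets this is lex-maximized by scooping up as many $y_1$-leaded factors as possible. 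Because exactly $q^{n-2}$ of the $\ell_j$ have lex lead $y_1$, namely the factors of $N(y_1)$, the unique maximizing $S$ is precisely that set, producing lead $y_1^{(q-1)q^{n-2}}=y_1^{q^{n-1}-q^{n-2}}$ with coefficient $1$ (each $\ell_j$ in $S$ equals $y_1+(\text{lower terms})$, so the coefficient of $y_1^{q-1}$ in $\ell_j^{q-1}$ is $1$).

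The only real subtlety is to rule out cancellation from other summands. Any competing $S$ includes at most $q^{n-2}-1$ factors with lead $y_1$ and then picks up some factor whose lead is $\leq y_2$, so its lex contribution is bounded above by a monomial of the form $y_1^{(q-1)(q^{n-2}-1)}y_2^{q-1}\cdot(\cdots)$, which is strictly smaller in lex than $y_1^{(q-1)q^{n-2}}$. Hence no cancellation can reach the maximal monomial, and $\lt(d_{1,m})=y_1^{q^{n-1}-q^{n-2}}$. The main obstacle is thus not conceptual but a careful counting check that the number of linear factors of $u_m$ with lex lead $y_1$ is exactly $\deg N(y_1)=q^{n-2}$; once that is in place, the argument is forced.
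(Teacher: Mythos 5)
Your proof is correct and follows essentially the same route as the paper: both arguments apply the Cartan (Steenrod) expansion to the factorization of $u_m$ and identify the $y_1$-dominant contribution as coming from fully hitting $N(y_1)$ with $\PP^{q^{n-2}}$. The paper does this at the level of the norms $N(y_i), N(x_i)$ (invoking stability so that $\PP^{q^{n-2}}(N(y_1))=N(y_1)^q$), while you carry the same expansion down to the individual linear factors, which gives the explicit elementary-symmetric formula for $d_{1,m}$ and spells out the no-cancellation check that the paper leaves implicit — a finer presentation of the same underlying idea.
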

\begin{proof} The expression for $\lt(u_m)$ follows directly from the definition of $u_m$.
We apply $\PP^{q^{n-2}}$ to $u_m$ to get $u_m d_{1,m}$. Using the Cartan identity this gives
$$u_m d_{1,m}=\PP^{q^{n-2}}\left(N(y_m)\right)x_m\prod_{i=1}^{m-1} N(y_i)N(x_i)+F$$
where $\deg_{y_m}(F)<q^{n-1}$. Therefore, since $\lt(\PP^{q^{n-2}}(N(y_m))=y_m^{q^{n-1}}$,
we see that $\lt(u_md_{1,m})=y^{q^{n-1}-q^{n-2}}\lt(u_m)$.
Dividing by $u_m$ gives $\lt(d_{1,m})=y_m^{q^{n-1}-q^{n-2}}$.
\end{proof}

Define $\HH:=\{\xi_1,\ldots,\xi_{m},d_{1,m},\ldots,d_{m,m}\}$.

\begin{thm}  \label{ortho_hsop_thm} $\HH$ is a homogeneous system of parameters.
\end{thm}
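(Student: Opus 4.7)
The plan is to verify $\V(\HH) = \{0\}$ in $\overline V$; since $|\HH| = 2m$ equals the Krull dimension of $S_m$, this is equivalent to $\HH$ being an hsop. By Theorem~\ref{orth_var}, $\V(\xi_0,\ldots,\xi_{m-1}) = \bigcup_{g \in G_m} g\cdot W_m$ with $W_m := \Span_{\overline{\field}_q}\{e_1,\ldots,e_m\}$, and by $G_m$-invariance of the $d_{i,m}$ it suffices to prove that $d_{1,m}|_{W_m},\ldots,d_{m,m}|_{W_m}$ have $\{0\}$ as their only common zero on $W_m$.

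The first step is to compute these restrictions. I factor $u_m = v_m \cdot U_m$ where $v_m := x_1 N(x_2)\cdots N(x_m) \in \field_q[x_1,\ldots,x_m]$ and $U_m := N(y_1)\cdots N(y_m)$. Expanding $d_{i,m} = \PP^{e(i,m)}(u_m)/u_m$ via the Cartan identity and observing that $\PP^a(v_m)/v_m$ lies in $\field_q[x_1,\ldots,x_m]$ and has positive degree for $a > 0$, every such summand restricts to zero on $W_m$, leaving
$$d_{i,m}|_{W_m} = \bigl(\PP^{e(i,m)}(U_m)/U_m\bigr)\big|_{W_m}.$$

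The second step exploits the parabolic structure. The pointwise stabiliser of $W_m$ in $P_m$ is the unipotent radical of the parabolic fixing $W_m$, so $P_m$ acts on $W_m^*$ through its quotient, isomorphic to the upper unitriangular subgroup of $\gl{m}{q}$. Comparing orbit sizes $q^{n-j-1}$ versus $q^{m-j}$ shows each linear form in the unitriangular orbit of $Y_j := y_j|_{W_m}$ appears with multiplicity $q^{m-1}$ in $N(y_j)|_{W_m}$, so $U_m|_{W_m} = L_m^{q^{m-1}}$, where $L_m \in \field_q[Y_1,\ldots,Y_m]$ is the product of one representative per $\field_q$-line in $\Span_{\field_q}(Y_1,\ldots,Y_m)$. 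Using Frobenius to commute $q^{m-1}$-th powers with $\PP(t)$, and noting that $q^{m-1} \mid e(i,m)$, one obtains
$$d_{i,m}|_{W_m} = \bigl(\tilde\PP^{\tilde e(i,m)}(L_m)/L_m\bigr)^{q^{m-1}},$$
with $\tilde e(i,m) := e(i,m)/q^{m-1}$ and $\tilde\PP$ the analogous Steenrod operator on $\field_q[Y_1,\ldots,Y_m]$.

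Finally, the identity $\psi(L_m)/L_m = \prod_{w \in \Span_{\field_q}(Y_1,\ldots,Y_m)\setminus\{0\}}(t-w)$ combined with the classical Dickson factorisation shows that this polynomial in $t$ has non-zero coefficients only at $t^{q^j - 1}$ for $j = 0,\ldots,m$, and those coefficients are (up to sign) the Dickson invariants $c_{m,j}$ of $\gl{m}{q}$ acting on $\field_q[Y_1,\ldots,Y_m]$. Matching the exponents $\tilde e(i,m) = q^{m-1} + \cdots + q^{m-i}$ against $q^j + \cdots + q^{m-1}$ identifies $\{\tilde\PP^{\tilde e(i,m)}(L_m)/L_m\}_{i=1}^m$ with the full set $\{c_{m,j}\}_{j=0}^{m-1}$. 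Hence if all $d_{i,m}|_{W_m}$ vanish at some $v \in W_m$, so do all $c_{m,j}(v)$ with $j < m$, forcing $\prod_{w \neq 0}(t - w(v)) = t^{q^m - 1}$; this requires $w(v) = 0$ for every non-zero $w \in \Span_{\field_q}(Y_1,\ldots,Y_m)$, and hence $v = 0$. The principal difficulty I anticipate is the middle step: establishing the multiplicity $q^{m-1}$ via the Levi/unipotent decomposition of the parabolic stabiliser of $W_m$ inside $G_m$, and carefully matching the Steenrod exponents $\tilde e(i,m)$ with the classical Dickson exponent pattern $q^j + q^{j+1} + \cdots + q^{m-1}$.
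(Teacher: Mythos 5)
Your argument is correct and arrives at exactly the same conclusion as the paper's proof: after reducing modulo $I=\langle x_1,\ldots,x_m\rangle$, the restrictions of $d_{i,m}$ are (up to sign) the $q^{m-1}$-th powers of the Dickson invariants of $\gl{m}{q}$ acting on $\field_q[y_1,\ldots,y_m]$, and these have only the origin as common zero. The two routes differ in implementation. The paper reduces the minimal polynomial $\psi(u_m)/u_m=\prod\{t-x_1g\mid g\in G_m\}$ modulo $I$ to get $t^{q^m-1}\bigl(\prod_{w\in W_1\setminus\{0\}}(t-w)\bigr)^{q^{m-1}}$ and then matches coefficients against the expansion of $\psi$ in terms of $\PP^\ell$; it also runs a separate argument for $\bar d_1$, using the Borel invariants and the Levi embedding $\gl{m}{q}\hookrightarrow G_m$, to pin down the exact equality $\bar d_1=d_1(W_1)^{q^{m-1}}$. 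You instead split $u_m=v_m U_m$ into its isotropic $x$-part and $y$-part, use the Cartan identity to show only the $U_m$-part survives restriction, and derive the multiplicity $q^{m-1}$ from the observation that restriction maps the $P_m$-orbit of $y_j$ equivariantly onto the orbit of $y_j|_{W_m}$ under the unitriangular quotient, so has constant fibres of size $q^{n-j-1}/q^{m-j}=q^{m-1}$. This is a cleaner conceptual derivation of the congruence the paper asserts, $N(y_i)\equiv_I\prod_{w\in W_{i+1}}(y_i+w)^{q^{m-1}}$, and it lets you treat all $i$ uniformly from the start, skipping the separate $\bar d_1$ computation (which the paper needs only to fix a sign, immaterial for the variety claim). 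The two points you flagged as worrisome --- the multiplicity count and the matching of $\tilde e(i,m)=q^{m-1}+\cdots+q^{m-i}$ with the Dickson exponent $q^{m-i}+\cdots+q^{m-1}$ --- both hold, and are precisely the pivots of the paper's own argument; the exponent matching is just the change of index $j=m-i$ in the coefficient of $t^{q^j-1}$ in $\psi(L_m)/L_m$.
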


\begin{proof} We will show that the variety  in $\overline{V}=\overline{\field}_q\otimes V$
cut out by the ideal generated by $\HH$ is $\{\underline{0}\}$. 
Using Theorem \ref{orth_var},
$$\V( \xi_1,\ldots,\xi_{m})=\bigcup_{g\in \orthp{2m}{q}}g\V(x_1,x_2,\ldots,x_m).$$
For $v\in \V(\xi_1,\ldots,\xi_{m})$, choose $g\in \orthp{2m}{q}$ so that $gv\in \V(x_1,x_2,\ldots,x_m)$. 
From Theorem \ref{min_poly_thm}, $d_{i,m}\in S_m^{G_m}$.
Therefore $d_{i,m}(gv)=d_{i,m}(v)$. Hence to show $\V(\HH)=\{\underline{0}\}$, it is sufficient to show
$\V(x_1,\ldots,x_m,d_{1,m}, \ldots,d_{m,m})=\{\underline{0}\}$.
To do this we consider $d_{i,m}$  modulo the ideal $I:=\langle x_1,x_2,\ldots,x_m\rangle$.

Define $\bar{d}_i\in\field_q[y_1,\ldots,y_m]$ by $\bar{d}_i\equiv_I d_{i,m}$. We will show that $\{\bar{d}_1, \ldots, \bar{d}_m\}$ is an homogeneous system of parameters in
$\field_q[y_m,y_{m-1},\ldots,y_1]$. 
Define $W_m:={\rm Span}_{\field_q}\{y_m,\ldots,y_1\}$.
Let $d_i(W_m)$ denote the $i^\text{th}$ Dickson invariant
in the variables $y_m,\ldots,y_1$.
For a definition and background material on the Dickson invariants see
\cite{Wilkerson-primDickinva:83} or \cite{dickson-fundsystinvagene:11} or
\cite[\S 3.3]{{Campbell+Wehlau:MIT11}}.
We claim that $$d_{i,m}\equiv_I\pm d_i(W_m)^{q^{m-1}}$$ for all $i$.
From this, it follows that $\{\bar{d}_1,\ldots,\bar{d}_m\}$ is an homogeneous system of parameters for $\field_q[y_m,\ldots,y_1]$, as required.

To see that $d_{i,m}\equiv_I\pm d_i(W_m)^{q^{m-1}}$, we look at $\psi(u_m)/u_m$ modulo $I$.
Using Theorem~\ref{min_poly_thm}, its proof and the action of the Hook group given in Section~\ref{sylow_section}, we have
$$\psi(u_m)/u_m =\prod\{t-x_m g\mid g\in G_m\} \equiv_I t^{q^m-1}\left(\prod_{w\in W_m\setminus \{0\}} (t-w)\right)^{q^{m-1}}.$$
On the right-hand side of this equivalence, $d_i(W_m)^{q^{m-1}}$ is the coefficient of
$$t^{q^m-1}(t^{q^{m-i}-1})^{q^{m-1}}=t^{q^{2m-i-1}+q^m-q^{m-1}-1}.$$
On the left-hand side of the equivalence, $d_{i,m}$ is the coefficient of $(-t^{q-1})^\ell$ with
\begin{eqnarray*}
\ell(q-1)&=&(q^{m-1}+1)(q^m-1)-(q-1)e(i,m)\\
&=&q^{2m-1}+q^m-q^{m-1}-1-(q^{n-1}-q^{n-i-1})\\
&=&q^{n-i-1}+q^m-q^{m-1}-1
\end{eqnarray*}
from which we conclude $d_{i,m}\equiv_I\pm d_i(W_m)^{q^{m-1}}$, where the parity is determined by $(-1)^{\ell}=(-1)^{i+1}$.
\end{proof}

\begin{remark}
    We observe that the orthogonal invariants $d_{i,m}$ 
are in certain ways analogous to the Dickson invariants $d_i(V^*)$
which generate
$\field_q[V]^{\text{GL}(V)}$.  In particular, in both cases
these invariants are certain elementary symmetric functions 
in the orbit of a linear form.  Furthermore (see Theorem~\ref{st_alg_gen})
these two families of invariants may each be viewed as arising from
a single invariant under the action of the Steenrod algebra.  In the analogous calculation for the invariants of the symplectic 
group the $d_i(V^*)$ play the role of the $d_{i,m}$.   
\end{remark}

\begin{remark} It follows from the proof of Theorem \ref{ortho_hsop_thm} that using the grevlex order on $S_m$,
$$\LM(d_{i,m})=\LM(d_i(W_m))^{q^{m-1}}=\prod_{j=m-i+1}^m y_j^{q^{m+j-1}-q^{m+j-2}}.$$
(The lead monomial of $d_i(W_m)$ can be computed using 
\cite[Proposition~1.3(b)]{{Wilkerson-primDickinva:83}}). 
\end{remark}

\section{Formulating the Main Theorem}
Define $R_i:=\field_q[T_i,T_{i-1},\ldots, T_1]$ and 
let $\Phi_{i,m}:R_i\to S_m$ denote the algebra map defined 
by $\Phi_{i,m}(T_j)=\xi_{j}$.
If we grade $R_i$ by assigning $T_j$ degree $q^{j-1}+1$, then $\Phi_{i,m}$ preserves degree.
Since $\{\xi_1,\ldots,\xi_{2m}\}$ is algebraically independent in $S_m$, $\Phi_{i,m}$ is injective for $i<2m+1$.

\begin{lem} The kernel of $\Phi_{2m+1,m}$ is a principal ideal. Furthermore, the generator is of the form $aT_{2m+1}-b$ with $a,b\in R_{2m}\setminus\{0\}$.
\end{lem}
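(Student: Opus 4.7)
The plan is to combine the Carlisle--Kropholler theorem stated above with the fact that a height-one prime in a unique factorisation domain is principal. The key observation is that $\xi_{2m}\in\invring$, and by Carlisle--Kropholler $\F(\invring)=\field_q(\xi_0,\ldots,\xi_{2m-1})$, so $\xi_{2m}$ is a rational function in $\xi_0,\ldots,\xi_{2m-1}$. Clearing denominators yields an identity $Q(\xi_0,\ldots,\xi_{2m-1})\,\xi_{2m}=P(\xi_0,\ldots,\xi_{2m-1})$ for some nonzero $P,Q\in R_{2m-1}$, and so $QT_{2m}-P$ is a nonzero element of $\ker\Phi_{2m,m}$ of degree exactly one in $T_{2m}$, with both coefficients nonzero in $R_{2m-1}$.

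Next I would argue that $\ker\Phi_{2m,m}$ is principal. It is a prime ideal because $\image\Phi_{2m,m}$ is a subring of the domain $S_m$. Its height is one: the fraction field of the image equals $\field_q(\xi_0,\ldots,\xi_{2m-1})$ (using that $\xi_{2m}$ already lies in this field), so the image has Krull dimension $2m$, while $R_{2m}$ has dimension $2m+1$. Since $R_{2m}$ is a polynomial ring over a field it is a UFD, and every height-one prime in a UFD is principal; let $f$ denote a generator.

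Finally I would pin down the form of $f$. Since $f$ divides the relation $QT_{2m}-P$ constructed in the first step, we have $\deg_{T_{2m}}f\leq 1$. If $f$ lay in $R_{2m-1}$ then the algebraic independence of $\xi_0,\ldots,\xi_{2m-1}$ in $S_m$ would force $f=0$, so in fact $\deg_{T_{2m}}f=1$ and we may write $f=aT_{2m}-b$ with $a\in R_{2m-1}\setminus\{0\}$ and $b\in R_{2m-1}$. To see $b\neq 0$: otherwise $f=aT_{2m}$, and irreducibility of $f$ would force $a$ to be a unit in $R_{2m}$, so $T_{2m}\in\ker\Phi_{2m,m}$, whence $\xi_{2m}=0$ in $S_m$, a contradiction. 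I do not anticipate any serious obstacle once the Carlisle--Kropholler theorem is invoked; the only place to take a little care is verifying that $\dim\image\Phi_{2m,m}=2m$, which follows immediately from the algebraic independence of $\xi_0,\ldots,\xi_{2m-1}$.
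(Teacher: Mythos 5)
Your proof is correct, but it follows a genuinely different route from the paper's. Both arguments begin the same way: invoke Carlisle--Kropholler, write $\xi_{2m}$ as a ratio of elements of $\field_q[\xi_0,\ldots,\xi_{2m-1}]$, and pull back to get a nonzero degree-one relation $QT_{2m}-P\in\ker\Phi_{2m,m}$. After that you diverge. The paper's proof is constructive and self-contained: it chooses $\overline a,\overline b$ relatively prime at the outset (so that $aT_{2m}-b$ is prime in the UFD $R_{2m}$), and then shows directly that any $F\in\ker\Phi_{2m,m}$ is divisible by $aT_{2m}-b$ via a division argument in $R_{2m-1}[T_{2m}]$ (multiplying $F$ by a large enough power of $a$ to make the division possible, then using injectivity of $\Phi_{2m-1,m}$ on the remainder, and finally using primality of $aT_{2m}-b$ to strip off the power of $a$). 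Your proof instead proves the kernel is a height-one prime by a dimension count and then appeals to the general commutative-algebra fact that height-one primes in a UFD are principal; you then pin down the shape of a generator $f$ by observing that $f$ must divide $QT_{2m}-P$ and cannot lie in $R_{2m-1}$, and rule out $b=0$ via irreducibility of $f$. Your route is shorter once the dimension-theory input is taken as known (that $\dim\image\Phi_{2m,m}=\operatorname{trdeg}\F(\image\Phi_{2m,m})=2m$ and that $\dim(R_{2m}/\mathfrak p)+\operatorname{ht}\mathfrak p=\dim R_{2m}$ for a prime $\mathfrak p$ in a polynomial ring over a field), whereas the paper's argument avoids dimension theory entirely and hands you the generator explicitly in terms of the $\xi_i$. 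Both are valid; the tradeoff is abstraction versus constructiveness.
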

\begin{proof} 
The field of fractions of $S_m^{\orthp{2m}{q}}$ is $\field_q(\xi_1,\ldots,\xi_{2m})$. Write $\xi_{2m+1}=\overline{b}/\overline{a}$ with 
$\overline{a},\overline{b}\in\field_q[\xi_1,\ldots,\xi_{2m}]=\image(\Phi_{2m,m})$.
We may assume that $\overline{a}$ and $\overline{b}$ are relatively prime. Since $\Phi_{2m,m}$ is injective, there are unique $a,b\in R_{2m}$ with
$\Phi_{2m,m}(a)=\overline{a}$ and   $\Phi_{2m,m}(b)=\overline{b}$. Evaluating $\Phi_{2m+1,m}(aT_{2m+1}-b)$ gives $\overline{a}\xi_{2m+1}-\overline{b}=0$.
Therefore $aT_{2m+1}-b$ lies in the kernel of $\Phi_{2m+1,m}$. 
For any $F\in R_{2m+1}$, for $\ell$ sufficiently large we can divide by $aT_{2m+1}-b$ to get $a^{\ell}F=(aT_{2m+1}-b)f+r$ with $f,r\in R_{2m}$.
Applying $\Phi_{2m+1,m}$ gives $$\Phi_{2m+1,m}(a^{\ell}F)=\overline{a}^\ell\Phi_{2m+1,m}(F)=\Phi_{2m,m}(r).$$
Since $\Phi_{2m,m}$ is injective we see that $a^{\ell}F$ is in
$\ker(\Phi_{2m+1,m})$ if and only if $a^{\ell}F=(aT_{2m+1}-b)f$.
Since $\overline{a}$ and $\overline{b}$ are relatively prime, $aT_{2m+1}-b$ is prime in $R_{2m+1}$.   Hence $F\in\ker(\Phi_{2m+1,m})$ if and only if $F$ is divisible by $aT_{2m+1}-b$. 
\end{proof} 

Let $\overline{\Phi}_{i,m}$ denote the composition of $\Phi_{i,m}$ followed by projection onto $S_m/x_1S_m$. 
Let $\sigma$ denote the inclusion of $S_{m-1} \subset S_m$.
Recall that $\psi_{[m,1]}$ is the algebra homomorphism from $S_m$ to $S_m$ which takes $a$ to $a^q-ax_m^{q-1}$ for $a$ homogeneous of degree one.

\begin{lem}\label{phi_bar_lemma} (a) $\ker(\psi_{[m,1]})=x_mS_m$ and $\psi_{[m,1]}\circ\psi_{[m,1]}(f)=\psi_{[m,1]}(f)^q$.\\
(b) $\ker(\overline{\Phi}_{i,m})=
\ker(\Phi_{i,m-1})=\ker(\psi_{[m,1]}\circ\Phi_{i,m}).$\\
(c) If $f\in\ker(\overline{\Phi}_{i,m})$ then $u_m$ divides  $\Phi_{i,m}(f)$.
\end{lem}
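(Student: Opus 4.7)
The three parts chain naturally, so I plan to prove them in order.

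For (a), the inclusion $x_1 S_m \subseteq \ker(\psi_1)$ is immediate from $\psi_1(x_1) = x_1^q - x_1 \cdot x_1^{q-1} = 0$ and the fact that $\psi_1$ is an algebra homomorphism. For the reverse inclusion, I would write each $f \in S_m$ uniquely as $f = \sum_k f_k x_1^k$ with coefficients $f_k$ in $A := \field_q[y_1,\ldots,y_m,x_m,\ldots,x_2]$, so that $\psi_1(f) = \psi_1(f_0)$. It then suffices to check that $\psi_1|_A$ is injective, which I would verify by post-composing with the specialization $x_1 \mapsto 0$: that sends $\psi_1(v) = v^q - v x_1^{q-1}$ to $v^q$, i.e., to the $q$-th power Frobenius on $A$, which is injective. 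The identity $\psi_1\circ\psi_1(f) = \psi_1(f)^q$ then follows because both sides are algebra homomorphisms that agree on degree-one generators: for linear $v$, $\psi_1(\psi_1(v)) = \psi_1(v)^q - \psi_1(v)\psi_1(x_1)^{q-1} = \psi_1(v)^q$ since $\psi_1(x_1) = 0$.

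Part (b) is essentially formal given (a). Since $\overline{\Phi}_{i,m}$ is the composition of $\Phi_{i,m}$ with the quotient map $\pi\colon S_m \to S_m/x_1 S_m$, and $\ker(\pi) = x_1 S_m = \ker(\psi_1)$ by (a), we have $\ker(\overline{\Phi}_{i,m}) = \Phi_{i,m}^{-1}(\ker(\psi_1)) = \ker(\psi_1 \circ \Phi_{i,m})$. For the equality with $\ker(\Phi_{i,m-1})$, I would directly compute $\xi_j \bmod x_1$: every monomial of $\xi_j$ containing $y_1$ also contains $x_1$, so modulo $x_1$ one finds $\xi_j \equiv \sum_{k=2}^m (y_k^{q^j} x_k + y_k x_k^{q^j})$ (and $\xi_0 \equiv \sum_{k=2}^m y_k x_k$). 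Under the natural identification $S_m/x_1 S_m \cong \field_q[y_1, y_2,\ldots,y_m,x_m,\ldots,x_2]$, this reduction equals $\sigma(\Phi_{i,m-1}(T_j))$, so $\overline{\Phi}_{i,m} = \sigma \circ \Phi_{i,m-1}$. Since $\sigma$ is injective, the two kernels coincide.

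For (c), the plan is to combine (a) and (b) with the orbit description from Theorem \ref{min_poly_thm}. If $f \in \ker(\overline{\Phi}_{i,m})$, then (b) yields $\Phi_{i,m}(f) \in \ker(\psi_1) = x_1 S_m$, so $x_1$ divides $\Phi_{i,m}(f)$. The key observation is that $\Phi_{i,m}(f)$ actually lies in $S_m^{G_m}$, since it is a polynomial in the $G_m$-invariants $\xi_0,\ldots,\xi_i$. Applying any $g \in G_m$ to the divisibility $x_1 \mid \Phi_{i,m}(f)$ therefore gives $x_1 g \mid \Phi_{i,m}(f)$. By the orbit count in the proof of Theorem \ref{min_poly_thm}, the $G_m$-orbit of $x_1$ consists of the nonzero scalar multiples of the linear factors of $u_m$; in particular these linear forms are pairwise non-associate primes in the UFD $S_m$, so their product $u_m$ divides $\Phi_{i,m}(f)$. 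The subtlest step here is the invocation of the orbit description (and the pairwise non-associativity of the factors of $u_m$), so I would expect that to be the main obstacle.
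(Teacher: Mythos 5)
Your proposal is correct and follows essentially the same route as the paper. The only real variation is in the injectivity step of (a): you compose $\psi_1|_A$ with the specialization $x_1\mapsto 0$ to reduce to the Frobenius, whereas the paper simply observes that $\psi_1$ carries $\field_q[y_1,\ldots,x_2]$ injectively onto $\field_q[y_1^q-y_1x_1^{q-1},\ldots,x_2^q-x_2x_1^{q-1}]$; both are clean and equivalent. Parts (b) and (c) match the paper: identifying $\overline{\Phi}_{i,m}=\sigma\circ\Phi_{i,m-1}$ via the explicit reduction of $\xi_j$ modulo $x_1$, and then in (c) using invariance of $\Phi_{i,m}(f)$, the orbit description from Theorem~\ref{min_poly_thm}, and the fact that $u_m$ is a product of pairwise non-associate linear forms from that orbit.
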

\begin{proof} (a) Clearly $x_m S_m\subseteq \ker(\psi_{[m,1]})$. 
To see that $x_mS_m=\ker(\psi_{[m,1]})$, identify $S_m/x_mS_m$ with $\field_q[y_m,y_{m-1},\ldots, x_{m-1}]$ and observe that $\psi_{[m,1]}$ maps this ring
injectively to $\field_q[y_m^q-y_mx_m^{q-1},y_{m-1}^q-y_{m-1}x_m^{q-1},\ldots, x_{m-1}^q-x_{m-1}x_m^{q-1}]$.
For $a\in S_m$ with $\deg(a)=1$, we have $\psi_{[m,1]}\circ\psi_{[m,1]}(a)=\psi_{[m,1]}(a^q-ax_m^{q-1})=\psi_{[m,1]}(a)^q$. 
The general result follows from the fact that taking the $q^{th}$ power is an algebra monomorphism on $S_m$.

(b) If we identify $S_m/x_mS_m$ with $\sigma(S_{m-1})[y_m]$, then $\overline{\Phi}_{i,m}=\sigma\circ\Phi_{i,m-1}$. 
Since $\sigma$ is a monomorphism, we see that $\ker(\overline{\Phi}_{i,m})=\ker(\Phi_{i,m-1})$.
From (a) we have $\ker(\psi_{[m,1]})=x_mS_m$. Therefore $\ker(\overline{\Phi}_{i,m})=\ker(\psi_{[m,1]}\circ\Phi_{i,m})$ follows from the definition of $\overline{\Phi}_{i,m}$.  

(c) If $f\in\ker(\overline{\Phi}_{i,m})$ then $x_m$ divides $\Phi_{i,m}(f)$. Since $\Phi_{i,m}(f)$ is invariant, 
if $x_m$ divides $\Phi_{i,m}(f)$,
each element in the orbit of $x_m$ must divide $\Phi_{i,m}(f)$. The result follows from the fact that $u_m$ is a product of relatively prime linear factors from the orbit of $x_m$ 
\end{proof}

In the following, for $i<2m+1$, we identify $R_i$ and $\Phi_{i,m}(R_i)$. 
Recall that $\psi:S_m\to S_m[t]$ is the algebra homomorphism determined by $\psi(a)=a^q-at^{q-1}$ for $a$ homogeneous of degree one. 
Note that $\psi$ followed by evaluating $t$ at $x_m$ gives $\psi_{[m,1]}:S_m\to S_m$.
Using Corollary~\ref{com_st} and the relationship between $\psi$ and the complete Steenrod operator gives 
\begin{equation}\label{psi_equation:1}
    \psi(\xi_1)=\xi_1^q-\xi_2 t^{q-1}+\xi_1 t^{2(q-1)},
\end{equation}
\begin{equation}\label{psi_equation:2}
\psi(\xi_2)=\xi_2^q-\xi_3 t^{q-1}-2\xi_1^qt^{q(q-1)}+\xi_2 t^{(q+1)(q-1)}
\end{equation}
and, for $i\geq 3$,
\begin{equation}\label{psi_equation:3}
\psi(\xi_{i})=\xi_{i}^q-\xi_{i+1} t^{q-1}-\xi_{i-1}^qt^{q^{i-1}(q-1)}+\xi_{i} t^{(q^{i-1}+1)(q-1)}\, .
\end{equation}
Therefore $\psi$ restricts to a map from $R_{n-1}$ to $R_{n}[t]$.

For $f\in R_{n}$, define $\rdeg(f)$ to be the degree of $f$ as a polynomial in 
the variables $\xi_1,\ldots,\xi_{n}$ 
and define $\wdeg(f)$ to be the degree of $f$ as an element of $S_m$.
For example, $\wdeg(\xi_{i})=q^{i-1}+1$ while $\rdeg(\xi_{i})=1$.
We will use the weighted reverse lexicographic order on $R_{n}$ with $\wdeg$
as the weight. This means we first compare $\wdeg$ and if two monomials
have the same $\wdeg$, we use reverse lex to determine the order
(see \cite[Example 1.2.8]{greuel+pfister:2008}).
In Magma \cite{magma:97}, this order is called Graded Reverse Lexicographical (Weighted)
and is implemented using the command {\tt grevlexw}
(see the online Magma handbook).  
Using this order $\xi_{i+1}>\xi_i$ and
$\xi_2^{q+1}>\xi_1^q\xi_3>\xi_2^{q-1}\xi_1^{q+1}$
(all three monomials have $\wdeg$ equal to $1+2q+q^2$ and
larger exponents on $\xi_1$ gives smaller monomials). 

Define $\nu:R_{n}\to \Z\cup\{\infty\}$ by $\nu(0)=\infty$ and, for $f\not=0$, $\nu(f)$ is the minimum value of $\rdeg$ on the non-zero terms of $f$.
For example, $\nu(\xi_2^q)=q$ and $\nu(\xi_2^q-\xi_3\xi_2^{(q-1)/2})=(q+1)/2$.
Note that if $\nu(f)=\ell$, then $f\in(R_{n})_+^{\ell}\setminus (R_{n})_+^{\ell+1}$
where $(R_{n})_+$ denotes the ideal of 
$R_{n}$ generated by $\{\xi_{n},\xi_{n-1},\dots,\xi_1\}$.  
It is easy to verify that for $f,h\in R_{n}$, we have $\nu(fh)=\nu(f)\nu(h)$ and
$\nu(f+h)\geq{\rm min}\{\nu(f),\nu(h)\}$.
It follows from \cite[Ch IV, \S 9, Theorem 14]{zariski+samuel-commalge:75}, that 
defining $\nu(f/h)=\nu(f)-\nu(h)$ extends 
$\nu$ to a discrete valuation on $\F(R_{n})$.
Note that the associated Discrete Valuation Ring, $\{a\in\F(R_{n})\mid \nu(a)\geq 0\}$, properly contains $R_{n}$, i.e.,
$R_{n}$ is not the associated Discrete Valuation Ring.

The following is a consequence of Corollary \ref{com_st}.

\begin{lem} \label{nu_st_lem} For $f\in R_{n}$, we have $\nu(\PP^i(f))\geq \nu(f)$.
\end{lem}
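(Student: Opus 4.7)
The plan is to reduce the inequality step by step down to the single-generator case and then read it off directly from Corollary~\ref{com_st}. I would first decompose $f$ into its $R$-degree homogeneous pieces, writing $f=\sum_{\ell} f_\ell$ with $f_\ell$ the sum of the monomials of $f$ of $R$-degree exactly $\ell$. Since distinct monomials in the algebraically independent $\xi_0,\ldots,\xi_{n-1}$ cannot cancel, $\nu(f)=\min\{\ell:f_\ell\neq 0\}$. Because $\PP^i$ is $\field_q$-linear and $\nu$ satisfies $\nu(g+h)\geq\min(\nu(g),\nu(h))$, proving the bound reduces to showing that $\nu(\PP^i(M))\geq\ell$ for every monomial $M$ of $R$-degree exactly $\ell$.

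For such a monomial $M=\xi_{j_1}\xi_{j_2}\cdots\xi_{j_\ell}$, I would expand via iterated Cartan
\begin{equation*}
\PP^i(M)=\sum_{i_1+\cdots+i_\ell=i}\prod_{k=1}^{\ell}\PP^{i_k}(\xi_{j_k}).
\end{equation*}
Since $\nu$ is additive on products, each summand in the outer sum has $\nu$-value $\sum_k\nu(\PP^{i_k}(\xi_{j_k}))$, and then subadditivity on the outer sum yields the required bound provided that $\nu(\PP^s(\xi_j))\geq 1$ holds for every $s\geq 0$ and every $0\leq j\leq n-1$. That single-generator statement is the base case to which the whole lemma reduces.

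The base case can be read straight off Corollary~\ref{com_st}: every non-zero coefficient of $t^k$ in $\PP(t)(\xi_j)$ is, up to a $\field_q$-scalar, one of $\xi_j$, $\xi_{j-1}^q$, $\xi_{j+1}$, or $\xi_j^q$, each a non-zero polynomial in the $\xi_\bullet$'s of $R$-degree at least $1$. The one step requiring extra care---and the main obstacle in making the chain of reductions rigorous---is the boundary term at $j=n-1$, where $\PP^{q^{n-1}}(\xi_{n-1})=\xi_n$ lies in $S_m^{G_m}$ but not literally in $R_{n-1}$. I would handle this by passing to the extension of $\nu$ already built on $\F(R_{n-1})=\F(S_m^{G_m})$ and observing that $\xi_n$, being a positive-degree homogeneous invariant, lies in the associated DVR with $\nu$-value at least $1$ (verifiable by comparing numerator and denominator in the expression $\xi_n=\overline{b}/\overline{a}$ supplied by the preceding lemma on $\ker\Phi_{2m,m}$). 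Once this boundary case is secured, chaining the reductions back up establishes the lemma.
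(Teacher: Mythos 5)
Your core reduction --- decomposing $f$ into $\rdeg$-homogeneous pieces, expanding a monomial $M$ via the iterated Cartan identity, and using multiplicativity and subadditivity of $\nu$ to boil the claim down to the single-generator inequality $\nu(\PP^s(\xi_j))\geq 1$, read off from Corollary~\ref{com_st} --- is exactly the argument the paper intends when it describes the lemma as "a consequence of Corollary~\ref{com_st}," and you are right to flag the one place where that corollary does not literally stay inside $R_{n-1}$.

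Your treatment of that boundary term, however, has a gap. You justify $\nu(\xi_n)\geq 1$ by saying $\xi_n$ is "a positive-degree homogeneous invariant." That inference is not valid in this setting: the $d_{i,m}$ are positive-degree homogeneous invariants, yet by Theorem~\ref{oplus_thm_v2}(b) one has $\nu(u_m)=\nu(u_m d_{i,m})$, so $\nu(d_{i,m})=\nu(u_m d_{i,m})-\nu(u_m)=0$. Thus positive degree does not force $\nu\geq 1$, and the parenthetical "verifiable by comparing numerator and denominator in $\xi_n=\overline{b}/\overline{a}$" is carrying the whole weight of the argument without actually being carried out --- the lemma on $\ker\Phi_{2m,m}$ supplies no control on $\nu(\overline a)$ or $\nu(\overline b)$. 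Worse, the statement $\nu(\xi_n)\geq 1$ is precisely the $j=n-1$, $s=q^{n-1}$ instance of the very reduction you set up, so you cannot conclude it from the machinery you are using to prove the lemma; some genuinely new input is required. The cleanest way out is to notice that the paper only ever invokes the lemma for $f\in R_{n-2}$ (it is applied to $\delta_{0,m}\in R_{n-2}$ in the proof of Theorem~\ref{oplus_thm_v2}(b)); for such $f$, Corollary~\ref{com_st} keeps $\PP^i(f)$ inside $R_{n-1}$, the term $\xi_n$ never appears, and your reduction closes with no boundary case to worry about.
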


\begin{lem} \label{nu_ord_lem}
Suppose $\beta$ is a monomial in $R_{n}$ with $\wdeg(\beta)=\wdeg(\xi_{i}^b\xi_{i+1}^a)$.
If $\nu(\beta)>\nu(\xi_{i}^b\xi_{i+1}^a)$ then $\lt(\beta)<\lt(\xi_{i}^b\xi_{i+1}^a)$ using the weighted revlex order.
\end{lem}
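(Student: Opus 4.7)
The plan is to convert the two hypotheses on $\beta$ into arithmetic identities on its exponent vector, then exploit the grevlex tiebreak — which orders two monomials of equal weighted degree by the sign of the exponent difference at the smallest differing index — to reduce the desired inequality to a linear-algebraic contradiction. I would begin by writing $\beta = \prod_j \xi_j^{c_j}$ and introducing the discrepancies
$$e_j := c_j - b\,\delta_{j,i} - a\,\delta_{j,i+1},$$
which satisfy $e_j \geq 0$ for every $j \notin \{i,i+1\}$. The hypothesis on $\wdeg$ becomes $\sum_j e_j(q^j+1)=0$; setting $\epsilon := \nu(\beta)-a-b$, the hypothesis on $\nu$ becomes $\sum_j e_j = \epsilon > 0$. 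Subtracting yields $\sum_j e_j q^j = -\epsilon$. Since the two monomials have equal weighted degree, showing $\lt(\beta)<\lt(\xi_i^b\xi_{i+1}^a)$ reduces to showing $e_{j^*}>0$, where $j^*$ is the least index with $e_{j^*}\neq 0$.

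I would then argue by contradiction, assuming $e_{j^*}<0$. The sign constraints on $e_j$ force $j^* \in \{i,i+1\}$ and $e_j = 0$ for every $j<j^*$. Multiplying the identity $\sum_j e_j=\epsilon$ by $q^{j^*+1}$ and subtracting $\sum_j e_j q^j=-\epsilon$ produces
$$\sum_{j \geq j^*} e_j\bigl(q^{j^*+1}-q^j\bigr) = \epsilon\bigl(q^{j^*+1}+1\bigr),$$
whose right side is positive. On the left, the $j=j^*$ term is $e_{j^*}q^{j^*}(q-1)<0$, the $j=j^*+1$ term cancels exactly, and every $j>j^*+1$ contributes non-positively, because in either case of $j^*$ such $j$ lies outside $\{i,i+1\}$, forcing $e_j \geq 0$ while $q^{j^*+1}-q^j<0$. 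The resulting strict inequality is the desired contradiction.

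The only delicate choice is the multiplier $q^{j^*+1}$; it is designed to annihilate the $j=j^*+1$ summand, which is essential since $e_{i+1}=c_{i+1}-a$ can be negative (in the case $j^*=i$) and would otherwise derail the sign analysis. Aside from identifying the correct multiplier, the argument is routine bookkeeping, and I do not anticipate any further obstacles; the lemma is fundamentally a combinatorial statement about non-negative exponent vectors constrained by a weighted-degree equation and an $R$-degree inequality.
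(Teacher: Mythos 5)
Your proof is correct, and it follows the same underlying strategy as the paper: argue by contradiction, playing the hypothesis $\nu(\beta)>a+b$ against the weighted-degree equality and against the non-negativity of every exponent other than those of $\xi_i$ and $\xi_{i+1}$. The bookkeeping differs. The paper first reduces the claim to showing $\beta\in\langle\xi_0,\dots,\xi_{i-1},\xi_i^{b+1}\rangle$ (since membership in this monomial ideal is exactly what forces the grevlex tiebreak in favour of $\xi_i^b\xi_{i+1}^a$) and then gets the contradiction by counting factors: at least $a+b+1$ factors in all, at most $b$ of them equal to $\xi_i$ and none smaller, hence at least $a+1$ factors strictly larger than $\xi_i$, which pushes $\wdeg(\beta)$ strictly above $\wdeg(\xi_i^b\xi_{i+1}^a)$. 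Your version instead encodes everything in the discrepancy vector $e_j=c_j-b\delta_{j,i}-a\delta_{j,i+1}$, unpacks the grevlex tiebreak explicitly as the condition $e_{j^*}>0$ at the least nonzero index, and derives the contradiction from the single identity $\sum_{j\geq j^*}e_j(q^{j^*+1}-q^j)=\epsilon(q^{j^*+1}+1)$, whose left side is manifestly nonpositive. The two arguments are interchangeable; yours has the virtue of making the grevlex mechanics fully explicit rather than leaving the ideal-membership reduction to the reader, while the paper's factor-counting phrasing is shorter and arguably more transparent about where the weighted degrees actually clash.
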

\begin{proof} It is sufficient to show that $\beta$ is in the ideal  
$\langle \xi_1,\ldots,\xi_{i-1},\xi_{i}^{b+1}\rangle \subset R_{n}$.
Suppose, by way of contradiction, that this is not the case. Since $\nu(\beta)>b+a$, $\beta$ has at least $a+b+1$ factors.  Since $\beta$ is not in the ideal 
$\langle \xi_1,\ldots,\xi_{i-1},\xi_{i}^{b+1}\rangle$, 
at least $a+1$ of these factors must be greater than $\xi_{i}$.
The smallest weighted degree of such a monomial is
$$b(q^{i-1}+1)+(a+1)(q^{i}+1)>b(q^{i-1}+1)+a(q^{i}+1)=\wdeg(\xi_{i}^b\xi_{i+1}^a), $$
contradicting the hypothesis that $\wdeg(\beta)=\wdeg(\xi_{i}^b\xi_{i+1}^a)$.
\end{proof}

Define an $m\times (m+1)$ matrix
$$M_m:=\begin{pmatrix}
\xi_{n} & \xi_{n-1} && \cdots & \xi_{m}\\
\xi_{n-1}^q & \xi_{n-2}^q && \cdots & \xi_{m-1}^q\\
&&\vdots&& \\
\xi_{m+1}^{q^{m-1}} & \xi_{m} ^{q^{m-1}} && \cdots & \xi_1^{q^{m-1}} 
\end{pmatrix}
$$ and let $M(i,m)$ denote the minor formed by removing column $i+1$ from $M_m$.
Note that $\nu(M(i,m))=1+q+\cdots +q^{m-1}$.

\begin{lem} \label{nu_min_lem} $\nu(\PP^{e(i,m)}(M(0,m))-M(i,m))>1+q+\cdots +q^{m-1}$.
\end{lem}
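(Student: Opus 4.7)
My plan is to expand $M(0,m)$ by the Leibniz formula, apply $\PP^{e(i,m)}$ using the Cartan identity, and identify the minimal-$\nu$ contributions with $M(i,m)$; every other term in the expansion will then be shown to have $\nu>1+q+\cdots+q^{m-1}$. Indexing the columns of $M_m$ by $0,1,\dots,m$ so that $M(i,m)$ uses the columns $\{0,\dots,m\}\setminus\{i\}$, write
$$
M(0,m)=\sum_{\pi}\sgn(\pi)\prod_{k=1}^m\xi_{n-1-k-\pi(k)}^{q^{\pi(k)}},
$$
where the sum runs over bijections $\pi\colon\{1,\dots,m\}\to\{0,\dots,m-1\}$. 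Iterated Cartan then gives
$$
\PP^{e(i,m)}(M(0,m))=\sum_\pi\sgn(\pi)\sum_{\sum_k s_k=e(i,m)}\prod_{k=1}^m\PP^{s_k}\bigl(\xi_{n-1-k-\pi(k)}^{q^{\pi(k)}}\bigr).
$$

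By Lemma~\ref{nu_st_lem} each factor $\PP^{s_k}(\xi_l^{q^{\pi(k)}})$ satisfies $\nu\geq q^{\pi(k)}$. Inspection of Corollary~\ref{com_st}, treating $l=0$, $l=1$ and $l\geq 2$ separately, shows equality holds only when $\PP^{s_k}$ either leaves the factor unchanged (so $s_k=0$) or produces the up-shift $\xi_{l+1}^{q^{\pi(k)}}$, which in every case occurs precisely for $s_k=q^{\pi(k)+l}=q^{n-1-k}$ (since $\PP^{q^l}(\xi_l)=\xi_{l+1}$ for $l\geq 1$ and $\PP^1(\xi_0)=\xi_1$ collapse to the same uniform formula). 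The minimal-$\nu$ terms are therefore parametrised by subsets $S\subseteq\{1,\dots,m\}$ with $\sum_{k\in S}q^{n-1-k}=e(i,m)=\sum_{a=1}^{i}q^{n-1-a}$. Because the exponents $n-1-k$ for $k\in\{1,\dots,m\}$ are distinct and lie in $\{n-2,\dots,n-1-m\}$, uniqueness of the $q$-adic expansion forces $S=\{1,2,\dots,i\}$.

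Carrying out this prescribed shift in a given permutation term replaces $\xi_{n-1-k-\pi(k)}^{q^{\pi(k)}}$ by $\xi_{n-k-\pi(k)}^{q^{\pi(k)}}$ for $1\leq k\leq i$, which is exactly the row-$\pi(k)$ entry of column $k-1$ of $M_m$, while columns $i+1,\dots,m$ are untouched. The order-preserving re-indexing $k\mapsto k-1$ on $\{1,\dots,i\}$ identifies the result with the Leibniz expansion of the $m\times m$ minor using columns $\{0,1,\dots,i-1,i+1,\dots,m\}$ of $M_m$ — that is, $M(i,m)$ — and preserves $\sgn(\pi)$. Summing over $\pi$ therefore recovers $M(i,m)$, while all other terms in the Cartan expansion contribute $\nu>1+q+\cdots+q^{m-1}$. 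The main obstacle I anticipate is keeping the sign bookkeeping consistent under the column re-indexing and verifying that the degenerate cases ($l=0$ and $l=1$ in Corollary~\ref{com_st}) obey the same uniform shift formula $s_k=q^{n-1-k}$; once these are settled, the $q$-adic uniqueness step pins down $S$ uniquely and the remainder of the argument is routine.
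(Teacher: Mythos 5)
Your proof is correct and takes essentially the same route as the paper's: expand $M(0,m)$ via the Leibniz formula, push $\PP^{e(i,m)}$ through by the Cartan identity, use the uniqueness of the base-$q$ expansion of $e(i,m)=\sum_{j=1}^i q^{n-1-j}$ to isolate the terms of minimal valuation (those with $s_k\in\{0,q^{n-1-k}\}$), and recognise their sum as $M(i,m)$. The edge cases $l=0,1$ in Corollary~\ref{com_st} and the sign bookkeeping under the column re-indexing check out exactly as you anticipated, so there is no gap.
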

\begin{proof} Let $m_{j,k}$ denote the row $j$ column $k$ entry of $M_m$.
Then
$$M(0,m)=\sum_{\sigma\in\Sigma_m}\sgn(\sigma)\left(\prod_{k=1}^m m_{\sigma(k),k+1}\right)$$
and, for $0<i\leq m$,
$$M(i,m)=\sum_{\sigma\in\Sigma_m}\sgn(\sigma)\left(\prod_{k=1}^{i} m_{\sigma(k),k}\right)\left(\prod_{k=i+1}^m m_{\sigma(k),k+1}\right).$$
Note that $\nu(m_{j,k})=q^{j-1}$. For $k>1$, it follows from Lemma \ref{com_st} that $\PP^{q^{n-k}}(m_{j,k})=m_{j,k-1}$,
and $\nu(\PP^{\ell}(m_{j,k}))>q^{j-1}$ unless $\ell=0$ or $\ell=q^{n-k}$.
Therefore, since $e(i,m)=q^{n-2}+q^{n-3}+\cdots +q^{n-i-1}$, the Cartan identity yields
$$\PP^{e(i,m)}\left(\prod_{k=1}^m m_{\sigma(k),k+1}\right)
=\left(\prod_{k=1}^{i} m_{\sigma(k),k}\right)\left(\prod_{k=i+1}^m m_{\sigma(k),k+1}\right)+\delta$$
with $\nu(\delta)>1+q+\cdots +q^{m-1}$.
Hence $\nu(\PP^{e(i,m)}(M(0,m))-M(i,m))>1+q+\cdots +q^{m-1}$.
\end{proof}

Finally we are ready to formulate the main theorem.
We will show that $S_m^{G_m}$ is generated by the $3m-1=n+m-1$ invariants
$$\cS_m:=\{\xi_1,\ldots,\xi_{n-1},d_{1,m},\ldots,d_{m,m}\}.$$
The proof is by induction on $m$ and relies on the following lemma.


\begin{lem}\label{oplus_tech-lem} Suppose $m>1$ and 
$S_{m-1}^{G_{m-1}}$ is generated by 
$\cS_{m-1}$.\\
(a) $u_m\in R_{n-1}$, $\ker(\Phi_{n-1,m-1})=u_m R_{n-1}$, 
and $u_m d_{i,m}\in R_{n}$.\\
(b) $\PP^i(u_m)=0$ for $0<i<q^{m-1}$ 
and $\PP^i(u_m)/u_m\in R_{n-1}$ for $q^{m-1}\leq i<q^{n-2}$.
For $i=\ell q^{n-2}+r$ with $0\leq\ell<q$ and $0\leq r<q^{n-2}$, $u_m^{\ell+1}\PP^i(d_{j,m})\in R_{n}$ 
with $\xi_{n}$-degree at most $\ell+1$. If $\ell=q-1$ and $i\not\equiv_{(q)} 0$ then 
$u_m^q\PP^i(d_{j,m})$ has $\xi_{n}$-degree at most $q-1$.\\
(c) Interpreting $u_{m-1}$ and $u_{m-1} d_{i,m-1}$ as elements of $R_{n-2}\subset S_m^{G_m}$, we have
$$u_m=(\xi_{n-1}+c_{n-1,m})u_{m-1}^q+\sum_{i=1}^{m-1}(-1)^i(\xi_{n-1-i}+c_{n-1-i,m})(u_{m-1}d_{i,m-1})^q$$
with $c_{j,m}\in R_{j-1}$ and $u_m d_{i,m}-(u_{m-1}d_{i-1,m-1})^q\xi_{n}\in R_{n-1}$. \\
(d) Using the weighted revlex order on $R_{n}$, 
$$\lt(u_m)=(-1)^{\lfloor m/2 \rfloor}\xi_{m}^{q^{m-1}+q^{m-2}+\cdots+1}$$ and 
$\lt(u_m d_{i,m})=(-1)^{\lfloor m/2 \rfloor}\xi_{m}^{q^{m-1-i}+\cdots+1}\xi_{m+1}^{q^{m-1}+\cdots+q^{m-i}}$.
Note that for $i=m$, $\lt(u_m d_{m,m})$ is a power of $\xi_{m+1}$.\\
Furthermore $$\nu(u_m)=\nu(u_md_{i,m})=1+q+\cdots +q^{m-1},$$
$u_m=M(0,m)+\delta_{0,m}$ with $\delta_{0,m}\in R_{n-1}$,
$u_md_{i,m}=M(i,m)+\delta_{i,m}$ with $\delta_{i,m}\in R_{n}$ and, for $0\leq i\leq m$, $\nu(\delta_{i,m})>1+q+\cdots +q^{m-1}$.\\ 
(e) There exists $c_{n,m}\in R_{n-1}$ such that
 $$\xi_{n}=\left(\sum_{i=1}^{m}(-1)^{i+1}(\xi_{n-i}+c_{n-i,m})d_{i,m}\right)-c_{n,m}.$$
(f) For $1\leq i\leq m-1$ there exist $\gamma_{k,m}^{(i)}\in R_{n-1}$ with $\nu(\gamma_{k,m}^{(i)})>q^i$
such that
$$\xi_{n-i}^{q^i}=\left(\sum_{j=1}^m (-1)^{j+1}(\xi_{n-i-j}^{q^i}+\gamma_{n-i-j,m}^{(i)})d_{j,m}\right)-\gamma_{n-i,m}^{(i)}.$$
\end{lem}

We postpone the proof of Lemma~\ref{oplus_tech-lem} to 
Section~\ref{proof_of_tech-lem}.

\begin{remark}\label{um_div_rem}
It follows from Lemma~\ref{phi_bar_lemma}(b) and Lemma~\ref{oplus_tech-lem}(a)
that if $u_m$ divides $f$ in $S_m$ and $f\in R_{n-1}$ then $f/u_m\in R_{n-1}$.
To see this, suppose $u_m$ divides $f$ in $S_m$ and $f\in R_{n-1}$. 
Then $x_m$ divides $f$ in $S_m$, which means that $f\in\ker(\psi_{[m,1]})$.
Since $f\in R_{n-1}$, we have $f\in\ker(\psi_{[m,1]}\circ \Phi_{n-1,m})$.
Using  Lemma~\ref{phi_bar_lemma}(b) gives
$\ker(\Phi_{n-1,m-1})=\ker(\psi_{[m,1]}\circ \Phi_{n-1,m})$. 
From Lemma~\ref{oplus_tech-lem}(a), we have
$\ker(\Phi_{n-1,m-1})=u_m R_{n-1}$. Therefore $f\in u_mR_{n-1}$, as required.
\end{remark}

Recall from Theorem~\ref{ortho_hsop_thm} the homogeneous system of parameters
$$\HH=\{\xi_1,\ldots,\xi_{m},d_{1,m},\ldots,d_{m,m}\}$$ 
where $\deg(\xi_i)=q^{i-1}+1$ and $\deg(d_{i,m})=q^{n-1}-q^{n-1-i}$. Let 
$\B$ denote the set of monomial factors of 
$$\Gamma=\prod_{i=m+1}^{n-1}\xi_{i}^{q^{n-i}-1}= \prod_{j=1}^{m-1}\xi_{n-j}^{q^j-1}\ .$$ 

We now state our main theorem.

\begin{thm} \label{oplus_thm_v2} 
Suppose $m>1$. $S_m^{G_m}$ is generated by 
$$\cS_m=\HH\cup\{\xi_{m+1},\ldots,\xi_{n-1}\}
=\{\xi_1,\ldots,\xi_{n-1},d_{1,m},\ldots,d_{m,m}\}.$$ 
Furthermore, $S_m^{G_m}$ is the free $\field_q[\HH]$-module with basis $\B$ and is the complete intersection with the relations given in part (f) of 
Lemma~\ref{oplus_tech-lem}.
\end{thm}

\begin{proof} 
We assume the conclusions of Lemma~\ref{oplus_tech-lem} hold for $S_m^{G_m}$.
Note that $S_1^{G_1}$ is the polynomial algebra generated by $\xi_1=x_1y_1$
and $\xi_2/\xi_1=y_1^{q-1}+x_1^{q-1}$.

    Let $A$ denote the subalgebra  of $S_m^{G_m}$ generated by $\cS_m$. 
    Using Theorem~\ref{ortho_hsop_thm},
$A\subseteq S_m^{G_m}$ is an integral extension.
Using part (e) of Lemma~\ref{oplus_tech-lem}, $R_{n}\subset A$. Therefore 
$\F(A)=\F(R_{n})=\F(S_m^{G_m})$.
Thus to show $A=S_m^{G_m}$, it is sufficient to show that $A$ is integrally closed in its field of fractions.
We will use \cite[Proposition 1.1]{Kropholler+Rajaei+Segal:05} to prove that $A$ is a UFD. 
Recall that a UFD is integrally closed in its field of fractions (see, for example, \cite[Proposition 4.10]{eisenbud:95}).

Using the definition and part (f) of Lemma~\ref{oplus_tech-lem}, $A$ is the $\field_q[\HH]$-module generated by $\B$.
Note that the number of elements of the set $\B$ is $\prod_{i=1}^{m-1}q^i=q^{m(m-1)/2}$.
From Theorem~\ref{sylow_thm}, we know that $S_m^{P_m}$ is Cohen-Macaulay.
Therefore $S_m^{G_m}$ is Cohen-Macaulay
and $S_m^{G_m}$ is a free $\field_q[\HH]$-module of rank
    $$
        \frac{\prod_{i=0}^{m-1}(q^i+1)\prod_{j=1}^m(q^{n-1}-q^{n-1-j})}{2q^{m(m-1)}(q^m-1)\prod_{k=1}^{m-1}(q^{2k}-1)},
    $$
which simplifies to $q^{m(m-1)/2}$. 
Therefore  the field extension $\F(\field_q[\HH])\subset \F(S_m^{G_m})$ is of degree $q^{m(m-1)/2}$. 
Since $\F(A)=\F(S_m^{G_m})$, the degree of the field extension $\F(\field_q[\HH])\subset \F(A)$ is $q^{m(m-1)/2}$.
We know that $\B$ is a spanning set for $\F(A)$ over $\F(\field_q[\HH])$. By comparing the order of $\B$ with the degree of the extension, 
we see that $\B$ is linearly independent over  $\F(\field_q[\HH])$. Therefore $A$ is a free $\field_q[\HH]$-module with basis $\B$ and $A$ is Cohen-Macaulay. 
Since $A$ is a free $\field_q[\HH]$-module with basis $\B$, the Hilbert series for $A$ is
$$HS(A,t)=\left(\prod_{i=1}^m\frac{1}{1-t^{q^{n-1}-q^{n-1-i}}}\right)
\left(\prod_{i=1}^{m}\frac{1}{1-t^{q^{i-1}+1}}\right)
\left(\prod_{i=m+1}^{n-1}\frac{1-t^{q^{n-1}+q^{n-i}}}{1-t^{q^{i-1}+1}}\right).$$

The complete intersection with relations given by 
Lemma~\ref{oplus_tech-lem}(f) surjects onto $A$.
Since the the Hilbert series of the complete intersection coincides with the Hilbert series for $A$, the surjection is a bijection.
Hence $A$ is the complete intersection with relations given in Lemma~\ref{oplus_tech-lem}(f). Therefore, to complete the proof of the theorem, 
we need only show that $A=S_m^{G_m}$.

From Lemma~\ref{oplus_tech-lem}(d), for $m$ and $m-1$, we have 
$$\lt(u_m)=(-1)^{\lfloor m/2 \rfloor}\xi_{m}^{q^{m-1}+q^{m-2}+\cdots+1}$$ and $\lt(u_{m-1})=(-1)^{\lfloor (m-1)/2 \rfloor}\xi_{m-1}^{q^{m-2}+q^{m-3}+\cdots+1}$ (for $m=2$, $u_{m-1}=u_1=x_1y_1=\xi_1$).
Therefore, since $\HH$ is an homogeneous system of parameters, $\{u_{m-1},u_m\}$ is a partial homogeneous system of parameters. Since $A$ is Cohen-Macaulay, this means that 
$[u_{m-1},u_m]$ is a regular sequence in $A$. From 
Lemma~\ref{oplus_tech-lem}(a) for $m-1$, when $m>2$, we have $\ker(\Phi_{n-3,m-2})=u_{m-1} R_{n-3}$. Therefore $u_{m-1}$ is prime in $R_{n-3}$. If $m=2$, $u_1=\xi_1$ is prime in $R_1=\field_q[\xi_1]$.
Since $R_{n}$ is a polynomial extension of $R_{n-3}$, $u_{m-1}$ is prime in $R_{n}$ and in its localisation $R_{n}[u_m^{-1}]$.
Using part (e) of Lemma~\ref{oplus_tech-lem} , $R_{n}\subset A$ and by definition 
$u_m d_{i,m}\in R_{n}$. Therefore $A[u_m^{-1}]=R_{n}[u_m^{-1}]$ and
$u_{m-1}$ is prime in $A[u_m^{-1}]$. To satisfy the hypotheses of \cite[Proposition 1.1]{Kropholler+Rajaei+Segal:05} and complete the proof, we need to show that
$A[u_{m-1}^{-1}]$ is a UFD. 

From Lemma~\ref{oplus_tech-lem}(c), we have $u_m d_{1,m}-u_{m-1}^q\xi_{n}\in R_{n-1}$. 
Therefore $R_{n}\subset R_{n-1}[d_{1,m}, u_{m-1}^{-1}]$ and
$\F(R_{n})=\F(R_{n-1}[d_{1,m}])$. Hence $R_{n-1}[d_{1,m}]$ is a polynomial algebra and 
$R_{n-1}[d_{1,m}, u_{m-1}^{-1}]$ is a UFD.
We prove $A[u_{m-1}^{-1}]$ is a UFD by showing  $A[u_{m-1}^{-1}]=R_{n-1}[d_{1,m},u_{m-1}^{-1}]$. By definition, $R_{n-1}[d_{1,m}]\subset A$.
From Lemma~\ref{oplus_tech-lem}(c), we have $u_m d_{i,m}-(u_{m-1}d_{i-1,m-1})^q\xi_{n}\in R_{n-1}$ for $i>1$. Cross-multiplying to eliminate $\xi_{n}$ gives
$$u_{m-1}^q u_m d_{i,m}- (u_{m-1}d_{i-1,m-1})^qu_m d_{1,m}\in R_{n-1}.$$ 
Using $\ker(\Phi_{n-1,m-1})=u_m R_{n-1}$ (Lemma~\ref{oplus_tech-lem}(a)) and
Remark~\ref{um_div_rem}, we see that 
$$u_{m-1}^q d_{i,m}- (u_{m-1}d_{i-1,m-1})^q d_{1,m}\in R_{n-1}$$ (with $u_{m-1}d_{i-1,m-1}\in R_{n-2}$). Thus $d_{i,m}\in R_{n-1}[d_{1,m},u_{m-1}^{-1}]$, as required.
\end{proof}

\begin{remark}
    The rank of $\invring$ as a free $\field_q[\HH]$-module is $q^{m(m-1)/2}$.
\end{remark}

\begin{remark}
  Note that the relations in the associated graded algebra are given by
  $$\xi_{n-i}^{q^i}=\sum_{j=1}^m (-1)^{j+1}\xi_{n-i-j}^{q^i}d_{j,m} \quad
  \text{ for } i=1,2,\dots,m-1.$$
  This formula is also true for $i=0$.
\end{remark}

\begin{cor}\label{orthogonal minimality}
    $\invring$ is minimally generated by 
    $$\{\xi_1,\xi_2,\dots,\xi_{n-1},d_{1,m},d_{2,m},\dots,d_{m,m}\}\ .$$
\end{cor}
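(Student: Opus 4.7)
The plan is to invoke a graded Nakayama argument. A homogeneous generating set for the finitely generated graded algebra $\invring$ (with $(\invring)_0=\field_q$) is minimal exactly when its image forms a $\field_q$-basis of $(\invring)_+/(\invring)_+^2$. Because I will first verify that the $3m-1$ proposed generators have pairwise distinct degrees, their images automatically lie in different graded pieces, and the problem reduces to showing that each generator is indecomposable, that is, not in $(\invring)_+^2$.

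First I would tabulate the degrees. From Lemma~\ref{lex_lt_lem} and the remark following Theorem~\ref{ortho_hsop_thm}, $\deg(\xi_i)=q^i+1$ for $0\le i\le n-2$ and $\deg(d_{j,m})=q^{n-1}-q^{n-j-1}$ for $1\le j\le m$. Each family is strictly increasing in its index, and for $m\ge 2$ and $q\ge 3$ the smallest $d$-degree $q^{n-2}(q-1)$ strictly exceeds the largest $\xi$-degree $q^{n-2}+1$ (since $q^{n-2}(q-2)>1$). Hence the two families occupy disjoint degree ranges and all $3m-1$ degrees are distinct.

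For each $\xi_i$ with $0\le i\le n-2$, a hypothetical decomposition $\xi_i=\sum f_k g_k$ with homogeneous $f_k,g_k\in(\invring)_+$ would force $\deg(f_k)+\deg(g_k)=q^i+1$; since $\invring$ contains no elements of degree one, both factors have degree at most $q^i-1$. Every generator contributing to such a factor has degree at most $q^i-1<q^i+1$, and the degree bookkeeping above shows that the only eligible generators are $\xi_0,\dots,\xi_{i-1}$. Therefore $\xi_i$ would lie in $\field_q[\xi_0,\dots,\xi_{i-1}]$, contradicting the algebraic independence of $\xi_0,\dots,\xi_{n-1}$; this independence is immediate from the Carlisle--Kropholler identification $\F(\invring)=\field_q(\xi_0,\dots,\xi_{n-1})$, since the transcendence degree of $\F(\invring)$ over $\field_q$ equals $\dim\invring=n$.

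For each $d_{i,m}$ the plan is to work modulo the ideal $I=\langle x_1,\dots,x_m\rangle\subset S_m$. Every $\xi_j$ lies in $I$, and the proof of Theorem~\ref{ortho_hsop_thm} already established the congruence $d_{j,m}\equiv_I\pm d_j(W_1)^{q^{m-1}}$; since the Dickson invariants $d_1(W_1),\dots,d_m(W_1)$ are algebraically independent, so are their $q^{m-1}$-th powers. Thus the image of $\invring$ in $S_m/I$ is the polynomial ring $\field_q[\bar d_{1,m},\dots,\bar d_{m,m}]$, in which no variable lies in the square of the augmentation ideal. A decomposition $d_{i,m}\in(\invring)_+^2$ would push down to an expression of the variable $\bar d_{i,m}$ as a sum of products of positive-degree polynomials in the $\bar d_{j,m}$, which is impossible. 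I expect no real obstacle here: the crucial algebraic-independence inputs are already packaged into the preceding results, leaving only routine degree accounting.
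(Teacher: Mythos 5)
Your proposal is correct, and for the $\xi_i$ it essentially expands the paper's own reasoning (algebraic independence plus the fact that the $\xi_i$ are the only generators of small degree), but for the $d_{i,m}$ you take a genuinely different route. The paper finishes by appealing to Theorem~\ref{orth_var}: the variety $\V(\xi_0,\dots,\xi_{n-2})$ has dimension $m$, so at least $m$ invariants in addition to the $\xi_i$ are needed to cut it down to a point, and since the proposed set supplies exactly $m$ more, minimality follows by counting. You instead reduce modulo the ideal $I=\langle x_1,\dots,x_m\rangle$, use the congruence $d_{i,m}\equiv_I\pm d_i(W_1)^{q^{m-1}}$ established in the proof of Theorem~\ref{ortho_hsop_thm}, and observe that the image of $\invring$ in $S_m/I$ is a polynomial algebra in which no $\bar d_{i,m}$ is decomposable. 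This makes the indecomposability of each $d_{i,m}$ visible one at a time via graded Nakayama, rather than via a global dimension count; the paper's counting argument is shorter, while yours more explicitly shows which algebraic-independence facts are doing the work and would adapt more readily to settings where the $d$'s had unequal degrees that could collide. Both approaches rely on Theorem~\ref{oplus_thm_v2}(f) (that the proposed set does generate): the paper implicitly, you implicitly when concluding that $f_k,g_k$ of degree $\le q^i-1$ are polynomials in $\xi_0,\dots,\xi_{i-1}$, and when identifying the image of $\invring$ in $S_m/I$. You may also want to state explicitly that the minimum positive degree in $\invring$ is $2$ (equivalently that there are no degree-one invariants, which holds since $-I\in G_m$ for odd $q$), since this is what lets you bound $\deg f_k,\deg g_k\le q^i-1$.
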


\begin{proof}
    The invariants $\xi_1,\xi_2,\dots,\xi_{n-1}$ are algebraically independent 
and of degree less than $\deg (d_{i,m})$ for all $i$.  Hence none of these $\xi_{i}$
is redundant.  But Theorem~\ref{orth_var} shows that at least $m$ other invariants 
are required to generate $\invring$.
\end{proof}

\section{Sylow Invariants} \label{sylow_section}
The order of $\orthp{2m}{q}$ is $2q^{m(m-1)}(q^m-1)\prod_{j=1}^{m-1}(q^{2j}-1)$,
see \cite[page 141]{taylor-classicalgroups:92}. 
Therefore a Sylow $p$-subgroup has order $q^{m(m-1)}$.
The field of fractions for the Sylow invariants was calculated in 
\cite{Ferreira+Fleischmann-fields:16}
 and the ring of invariants for the $m=2$ case was calculated in \cite{Ferreira+Fleischmann-rings:17}.

Let $H$ denote the {\it Hook} subgroup of $\orthp{2m}{q}$ consisting of unipotent matrices which differ from the identity only in the first row and the last column.
A direct calculation shows that for $h\in H$ we have$$y_m\cdot h= y_m+\sum_{i=1}^{m-1} (b_iy_i+a_ix_i-a_ib_ix_m),$$ $x_m\cdot h=x_m$, and for $j<m$,
  $y_j\cdot h=y_j-a_jx_m$, $x_j\cdot h=x_j-b_jx_m$ where $a_i,b_i\in\field_q$.
Hence the order of $H$ is $q^{2m-2}$. Note that $H$ is a normal subgroup of $P_m$. Identify $P_{m-1}$ with pointwise stabiliser of $\{y_m,x_m\}$ in $P_m$.
Then $P_m=P_{m-1}H$.
(Note that $(m-1)(m-2)+(2m-2)=m(m-1)$.)
Therefore $S_m^{P_m}=(S_m^H)^{P_{m-1}}$.
We will calculate $S_m^{P_m}$ by first computing the Hook invariants. 
Since the orbit product of $y_m$ over $H$ coincides with the orbit product of
 $y_m$ over $P_m$,
we have $N(y_m)=N^H(y_m)$ where $N^H(a)$ is the orbit product of $a$ over $H$.

\subsection{Computing the Hook Invariants} \label{HgroupSec}
Consider the subalgebra $$Q:=\field_q[y_{m-1},\ldots,y_1,x_1,\ldots,x_m]\subset S_m.$$
For $i<m$, define $Y_i:=y_i^q-y_ix_m^{q-1}$ and $X_i=x_i^q-x_ix_m^{q-1}$.
It is easy to see that $$Q^H:=\field_q[x_m][Y_i,X_i\mid i=1,\ldots,m-1].$$
Since $\xi_1$ has degree $1$ in $y_m$, we have $S_m^H[x_m^{-1}]=Q^H[\xi_1,x_m^{-1}]$
(see \cite[Theorem 2.4]{campbell+chuai-local:07}).

Using the lex order $\lt(\xi_j^q)=x_m^qy_m^{q^j}=\lt(\xi_{j+1}x_m^{q-1})$.
To subduct the \tat\ $\xi_j^q-\xi_{j+1}x_m^{q-1}$, we first eliminate $y_m$.
For $j=1$, we get
\begin{equation}\label{ortat:1}
  \xi_1^q-\xi_2x_m^{q-1}+\xi_1x_m^{2q-2}=\psi_{[m,1]}(\xi_1)\in Q^H
  \end{equation}
  (compare with Equation~\ref{psi_equation:1}).
  For $j>1$, we have
$$\xi_{j}^q-\xi_{j+1}x_m^{q-1}-\xi_1^qx_m^{q^j-q}+\xi_1x_m^{q^j+q-2}\in Q^H.$$
Since $Q^H$ is a polynomial algebra whose generators have relatively prime lead terms,
the subduction algorithm can be used to write an element of $Q^H$ as a polynomial in the generators. Therefore the \tat\ $\xi_{j}^q-\xi_{j+1}x_m^{q-1}$ subducts to zero in
$Q^H[\xi_1,\ldots,\xi_{j+1}]$. For $j=1$, the subduction is given by Equation~\ref{ortat:1}. For $j>1$, the subduction is given by the following lemma.

\begin{lem} \label{ortat:2} For $j>1$, we have
$$\xi_{j}^q-\xi_{j+1}x_m^{q-1}=\xi_1^qx_m^{q^j-q}-\xi_1x_m^{q^j+q-2}
+\sum_{i=1}^j\psi_{[m,1]}(\xi_i)x_m^{q^j-q^i}.$$
\end{lem}
\begin{proof} The proof is by induction on $j$.
Using Equation~\ref{psi_equation:2}
 and the fact that $\psi$ followed by evaluating $t$ at $x_m$ gives $\psi_{[m,1]}$, 
 we have
 \begin{eqnarray*} 
 \xi_{2}^q-\xi_{3}x_m^{q-1}
 &=&2\xi_1^qx_m^{q^2-q} -\xi_2x_m^{q^{2}-1}+\psi_{[m,1]}(\xi_{2})\cr
 &=&\psi_{[m,1]}(\xi_{2})+\xi_1^qx_m^{q^2-q}
 +x_m^{q^2-q}(\xi_1^q-\xi_2 x_m^{q-1}).
 \end{eqnarray*}
 Using Equation~\ref{ortat:1} to substitute for $\xi_1^q-\xi_2 x_m^{q-1}$
 gives
 \begin{eqnarray*} \xi_{2}^q-\xi_{3}x_m^{q-1}&=&\psi_{[m,1]}(\xi_{2})+\xi_1^qx_m^{q^2-q}
 +x_m^{q^2-q}(\psi_{[m,1]}(\xi_1)-\xi_1x_m^{2q-2})\cr
 &=&\xi_1^qx_m^{q^2-q}-\xi_1 x_m^{q^2+q-2}+\psi_{[m,1]}(\xi_{2})
 +x_m^{q^2-q}\psi_{[m,1]}(\xi_1),
 \end{eqnarray*} 
 proving the result for $j=2$.

For $j>2$, Equation~\ref{psi_equation:3} gives
\begin{eqnarray*} 
 \xi_{j}^q-\xi_{j+1}x_m^{q-1}&=&\xi_{j-1}^qx_m^{q^{j-1}(q-1)}
 -\xi_jx_m^{(q^{j-1}+1)(q-1)}+\psi_{[m,1]}(\xi_{j})\cr
 &=&\psi_{[m,1]}(\xi_{j})
 +x_m^{q^{j-1}(q-1)}(\xi_{j-1}^q-\xi_jx_m^{q-1}).
 \end{eqnarray*}  
 Using the induction hypothesis
 \begin{eqnarray*}
 x_m^{q^{j-1}(q-1)}(\xi_{j-1}^q-\xi_jx_m^{q-1})
 &=&x_m^{q^{j-1}(q-1)}(\xi_1^q x_m^{q^{j-1}-q}
 -\xi_1x_m^{q^{j-1}+q-2} \cr
 &&+\sum_{i=1}^{j-1}\psi_{[m,1]}(\xi_i)x_m^{q^{j-1}-q^i})\cr
&=&\xi_1^qx_m^{q^j-q}-\xi_1x_m^{q^j+q-2}
+\sum_{i=1}^{j-1}\psi_{[m,1]}(\xi_i)x_m^{q^j-q^i}.
\end{eqnarray*}
Therefore
$$\xi_{j}^q-\xi_{j+1}x_m^{q-1}=\xi_1^qx_m^{q^j-1}-\xi_1x_m^{q^j+q-2}
+\sum_{i=1}^j\psi_{[m,1]}(\xi_i)x_m^{q^j-q^i},$$
as required.
\end{proof}

\begin{lem} \label{ortat:3} $x_mN(y_m)-\xi_{n-1}\in Q^H[\xi_{1},\ldots,\xi_{n-2}]$.
  \end{lem}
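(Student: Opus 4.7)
The plan is to compare two expressions for $\xi_0^{q^{n-2}}$ --- one obtained directly from $N(y_1)$, the other by iterating \eqref{ortat:1} and \eqref{ortat:2} --- and then to reduce the difference.

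For the first expression, I exploit the $H$-invariance of $\xi_0$: for each $h\in H$, since $x_1 h = x_1$, the equation $\xi_0\cdot h = \xi_0$ reads $x_1(y_1 h) + \sum_{i\ge 2}(y_ih)(x_ih) = \xi_0$, giving $x_1(y_1 h) = \xi_0 - \theta_h$ with $\theta_h:=\sum_{i\ge 2}(y_ih)(x_ih)\in Q$. Multiplying over the free $H$-orbit of $y_1$ gives
$$x_1^{q^{n-2}} N(y_1) \;=\; \prod_{h\in H}(\xi_0-\theta_h) \;=\; \sum_{k=0}^{q^{n-2}}(-1)^k e_k(\theta)\,\xi_0^{q^{n-2}-k},$$
which lies in $Q^H[\xi_0]$ because the $\theta_h$ form an $H$-stable subset of $Q$ (indeed $\theta_h\cdot h' = \theta_{hh'}$), so each elementary symmetric polynomial $e_k(\theta)$ is in $Q^H$.

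For the second expression, I would prove by induction on $j\ge 1$ that
$$\xi_0^{q^j} \;=\; \xi_j\,x_1^{q^j-1} + E_j,\qquad E_j\in Q^H[\xi_0,\xi_1,\ldots,\xi_{j-1}];$$
the case $j=1$ is \eqref{ortat:1} rearranged, and the inductive step raises the previous identity to the $q$-th power (cross terms vanish by the Frobenius) and then applies \eqref{ortat:2} to rewrite $\xi_{j-1}^q\cdot x_1^{q^j-q}$ as $\xi_j x_1^{q^j-1}$ plus terms in $Q^H[\xi_0,\xi_1]$. Substituting $j=n-2$ and combining with the first identity yields
$$\bigl(x_1 N(y_1)-\xi_{n-2}\bigr)\,x_1^{q^{n-2}-1} \;=\; E_{n-2} + \sum_{k\ge 1}(-1)^k e_k(\theta)\,\xi_0^{q^{n-2}-k}\;\in\; Q^H[\xi_0,\ldots,\xi_{n-3}].$$

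The main obstacle is the final cancellation of the factor $x_1^{q^{n-2}-1}$ while remaining inside $Q^H[\xi_0,\ldots,\xi_{n-3}]$ rather than merely in its localisation at $x_1$. For this I would note that all $q^{n-2}$ of the $\theta_h$ agree modulo $x_1$ (since $\theta_h - \sum_{i\ge 2}y_ix_i \in x_1 Q$), so Lucas' theorem --- which forces $\binom{q^{n-2}}{k}\equiv 0\pmod p$ for $0<k<q^{n-2}$ --- yields successively higher $x_1$-divisibilities of the $e_k(\theta)$; refining the second induction in parallel to track the $x_1$-divisibility of each $E_j$ should then exhibit the right-hand side above as $x_1^{q^{n-2}-1}$ times an element of $Q^H[\xi_0,\ldots,\xi_{n-3}]$, completing the proof.
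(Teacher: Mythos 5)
Your Steps 1--3 are correct, and they take a genuinely different route from the paper. The paper never expands $x_1^{q^{n-2}}N(y_1)=\prod_{h\in H}(\xi_0-\theta_h)$ in elementary symmetric functions; instead it introduces the notion of a \emph{compliant} polynomial (a digit-sum bound $\nu_1(c_k)\ge \|k\|-1$ on the coefficient of $y_1^k$), proves by induction on $m$ using the $\T$-operator that $N(y_1)$ is compliant (Lemma~\ref{orthcompl}), and then applies the subduction-style Lemma~\ref{orstcomp}. Your observation that $\{\theta_h\}$ is $H$-stable so $e_k(\theta)\in Q^H$, and your induction giving $\xi_0^{q^j}=\xi_jx_1^{q^j-1}+E_j$ with $E_j\in Q^H[\xi_0,\dots,\xi_{j-1}]$, are both sound, and together with Step~1 they do yield
$$(x_1N(y_1)-\xi_{n-2})\,x_1^{q^{n-2}-1}\in Q^H[\xi_i\mid i=0,\dots,n-3].$$
Indeed, using $\prod_h(\xi_0-\theta_h)=x_1^{q^{n-2}}N(y_1)$ the right-hand side of your displayed combination collapses to $E_{n-2}+x_1^{q^{n-2}}N(y_1)-\xi_0^{q^{n-2}}$, and the identity becomes equivalent to Step~2; the useful content is that both Step~1 and Step~2 place their output in $Q^H[\xi_0,\dots,\xi_{n-3}]$.

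The genuine gap is Step~4, and as sketched it does not close. You need to pass from ``$g\,x_1^{q^{n-2}-1}\in Q^H[\xi_0,\ldots,\xi_{n-3}]$'' to ``$g\in Q^H[\xi_0,\ldots,\xi_{n-3}]$'', and the two mechanisms you propose don't deliver this. First, tracking the $x_1$-divisibility of $E_j$ yields nothing: already $E_1=\psi_1(\xi_0)-\xi_0x_1^{2q-2}$ has $\nu_1(E_1)=0$ because $\psi_1(\xi_0)=\sum_{i\ge 2}X_iY_i$ is not divisible by $x_1$, and the same is true of every $E_j$. Second, writing $\theta_h=\theta_0+x_1\phi_h$ and expanding, one gets $e_k(\theta)=\sum_{j\ge 0}x_1^je_j(\phi)\binom{q^{n-2}-j}{k-j}\theta_0^{k-j}$; Lucas kills only the $j=0$ term for $0<k<q^{n-2}$, so the correct conclusion is $\nu_1(e_k(\theta))\ge 1$, not the ``successively higher'' divisibilities you claim. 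The divisibility of the right-hand side by $x_1^{q^{n-2}-1}$ in $S_m$ arises from cancellation \emph{between} $E_{n-2}$ and the $e_k(\theta)\xi_0^{q^{n-2}-k}$ terms, not from each being highly divisible; and the harder question is whether the quotient stays in $Q^H[\xi_0,\ldots,\xi_{n-3}]$, which requires knowing how $x_1$ sits inside that subring --- precisely what Theorem~\ref{orHcal} (which logically depends on this lemma) eventually establishes. This cancellation-sensitive bookkeeping is exactly what the paper's compliance invariant $\nu_1(c_k)\ge\|k\|$ is designed to control, and I do not see how to recover it from the coarse $x_1$-adic information you propose to track.
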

We postpone the proof of the lemma to the end of the section. 
    
  Define $$A:=Q^H[\xi_{1},\ldots,\xi_{n-2}][N(y_m)]$$
  and
  $\mathcal{W}:=\{X_i,Y_i\mid i=1,\ldots,m-1\}$.
\begin{lem} \label{orKbasis} The set $$\mathcal{W}\cup\{N(y_m), x_m,\xi_{1},\dots,\xi_{n-2}\}$$
is a Khovanskii basis for $A$ using the lexicographic order.
Furthermore, $A$ is a complete intersection with relations given by the subductions of the \tat s
$\xi_{n-2}^q-x_m^qN(y_m)$ and $\xi_{j}^q-x_m^{q-1}\xi_{j+1}$ for $j=1,\ldots,n-3$.
\end{lem}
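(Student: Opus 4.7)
The plan is to verify the Khovanskii basis property by enumerating the minimal binomial relations among the leading monomials and showing they lift to the stated tête-à-têtes, then upgrade this to the Complete Intersection conclusion by a leading-form argument.

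First I would compute the leading monomials under the lexicographic order $y_1>\cdots>y_m>x_m>\cdots>x_1$. These are $\lt(Y_i)=y_i^q$, $\lt(X_i)=x_i^q$ for $i\geq 2$, $\lt(x_1)=x_1$, $\lt(N(y_1))=y_1^{q^{n-2}}$, and $\lt(\xi_j)=y_1^{q^j}x_1$ for $0\le j\le n-3$. Since the $y_i^q$ and $x_i^q$ $(i\geq 2)$ contribute freely in coordinates disjoint from $y_1,x_1$, any binomial relation among these lead monomials comes from a relation in the submonoid of $\mathbb{N}^2$ generated by $(q^{n-2},0)$, $(0,1)$ and $\{(q^j,1)\mid 0\le j\le n-3\}$ in the $(y_1,x_1)$-exponent plane. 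Equating coordinates, the minimal syzygies of this submonoid are precisely
$$q\cdot(q^j,1)=(q-1)(0,1)+(q^{j+1},1) \quad (0\le j\le n-4),$$
$$q\cdot(q^{n-3},1)=q\cdot(0,1)+(q^{n-2},0),$$
which lift to the $n-2$ candidate tête-à-têtes
$$T_j:=\xi_j^q-x_1^{q-1}\xi_{j+1}\ (0\le j\le n-4),\qquad T_{n-3}:=\xi_{n-3}^q-x_1^q N(y_1).$$

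Second I would show each $T_j$ subducts to zero, i.e., can be rewritten as a polynomial in the proposed basis whose leading monomial is strictly below $\lt(\xi_j^q)=y_1^{q^{j+1}}x_1^q$. For $T_0$, equation (1) rearranges to $T_0=-\xi_0 x_1^{2(q-1)}+\sum_{i=2}^m X_iY_i$, and both summands are visibly lex-smaller. For $1\le j\le n-4$, equation (2) gives $T_j=\xi_1 x_1^{q^{j+1}-1}-2\xi_0 x_1^{q^{j+1}+q-2}+\psi_1(\xi_j)$; the term $\psi_1(\xi_j)=\sum_{i=2}^m(Y_i^{q^j}X_i+Y_iX_i^{q^j})$ lies in $\field_q[\mathcal{W}]$, and each summand has leading monomial involving no $y_1$ and so is lex-below $y_1^{q^{j+1}}x_1^q$. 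For $T_{n-3}$, Lemma~\ref{ortat:3} rewrites $x_1 N(y_1)=\xi_{n-2}+\alpha$ with $\alpha\in Q^H[\xi_0,\ldots,\xi_{n-3}]$, so $T_{n-3}=(\xi_{n-3}^q-x_1^{q-1}\xi_{n-2})-x_1^{q-1}\alpha$, where the bracket is handled by equation (2) with $j=n-3$. Since every tête-à-tête subducts, the Khovanskii basis property follows from the standard criterion.

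Finally I would upgrade to the Complete Intersection conclusion. The presenting polynomial ring on the $2(m-1)+2+(n-2)=4m-2$ generators maps onto $A$, which has transcendence degree $n=2m$, and we have produced $n-2=2m-2$ relations, matching the CI deficit $(4m-2)-(2m-2)=2m$. To certify that the $T_j$ are genuinely a regular sequence presenting $A$, I would pass to leading forms in the presenting ring: under an appropriate weight order the initial forms of $T_0,\ldots,T_{n-3}$ are the monomials $Z_0^q,\ldots,Z_{n-3}^q$ in the variables presenting the $\xi_j$, which obviously form a regular sequence; by flat degeneration from the initial algebra to $A$ (which is licit here because the Khovanskii basis property guarantees the degeneration is flat with the expected Hilbert function), the $T_j$ themselves form a regular sequence.

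The main obstacle is the bookkeeping in the third step: checking that the right-hand sides produced by equations (1)--(2) and Lemma~\ref{ortat:3} actually live in $\field_q[\mathcal{W}\cup\{x_1,N(y_1),\xi_0,\ldots,\xi_{n-3}\}]$ \emph{and} have strictly smaller leading monomials. Once this verification is complete, both the Khovanskii and the Complete Intersection statements follow from standard criteria.
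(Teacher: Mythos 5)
Your proof follows essentially the same route as the paper: you identify the same $n-2$ tête-à-têtes, subduct them using Equations~\eqref{ortat:1}, \eqref{ortat:2} and Lemma~\ref{ortat:3} (with the substitution $x_1N(y_1)=\xi_{n-2}+\alpha$ exactly as the paper does for $T_{n-3}$), and then obtain the Complete Intersection conclusion by a dimension count. The paper's own proof is terse and omits both the enumeration of tête-à-têtes and any justification of the CI claim, so your expansion is in the spirit of what is implicit there; the only small imprecision is in the flat-degeneration paragraph, where the Khovanskii-compatible weight makes the initial forms of the $T_j$ the \emph{binomial} tête-à-têtes rather than the pure powers $Z_j^q$, but this is harmless since the dimension count (generators minus relations equals $\dim A = n$) combined with the fact that the Khovanskii basis property forces $\mathcal{I}=(T_0,\dots,T_{n-3})$ already delivers the CI conclusion without invoking the monomial regular sequence.
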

\begin{proof}
The \tat s $\xi_{j}^q-x_m^{q-1}\xi_{j+1}$ for $j=1,\ldots, n-3$ subduct to zero using Equation \ref{ortat:1} and Lemma~\ref{ortat:2}.
The subduction of the \tat\ $x_m^qN(y_m)-\xi_{n-2}^q$ is constructed using 
Lemma \ref{ortat:3} and Lemma~\ref{ortat:2} with $j= n-2$. 
Using Lemma~\ref{ortat:3}, define 
$F:=x_mN(y_m)-\xi_{n-1}\in Q^H[\xi_{1},\ldots,\xi_{n-2}]$
so $$\xi_{n-2}^q-x_m^qN(y_m)=\xi_{n-2}^q-x_m^{q-1}\xi_{n-1}-Fx_m^{q-1}.$$
Using Lemma~\ref{ortat:2} with $j=n-2$ gives
$$\xi_{n-2}^q-x_m^qN(y_m)=-Fx_m^{q-1}+\xi_1^qx_m^{q^{n-2}-q}-\xi_1x_m^{q^{n-2}+q-2}
+\sum_{i=1}^{n-2}\psi_{[m,1]}(\xi_i)x_m^{q^{n-2}-q^i}$$
with $\psi_{[m,1]}(\xi_i)\in Q^H$.
Subduction can be used to write
$F$ in terms of the generators of  $Q^H[\xi_{1},\ldots,\xi_{n-2}]$,
completing the calculation. 

Since the non-trivial \tat s subduct to zero, we have a Khovanskii basis.
When we have a Khovanskii basis for an algebra, the ideal of relations is generated by the subductions of the non-trivial \tat s. Therefore $A$ is a complete intersection.
\end{proof}

\begin{thm} \label{orHcal} $S_m^H=A$.
\end{thm}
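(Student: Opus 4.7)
The containment $A\subseteq S_m^H$ is immediate since each of the listed generators is $H$-invariant by construction. For the reverse containment, my plan is to combine a localization argument with a divisibility argument based on showing that $x_1$ is a prime element of $A$.

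First, localize at $x_1$. Since $Q^H\subseteq A$ and $\xi_0\in A$, and since the identification $S_m^H[x_1^{-1}]=Q^H[\xi_0,x_1^{-1}]$ has already been recorded in this section (via \cite[Theorem~2.4]{campbell+chuai-local:07}), we have
\[
S_m^H[x_1^{-1}]=Q^H[\xi_0,x_1^{-1}]\subseteq A[x_1^{-1}]\subseteq S_m^H[x_1^{-1}],
\]
so all three rings coincide. Consequently, for every $f\in S_m^H$ there is some $N\geq 0$ with $x_1^N f\in A$, and it suffices to show $x_1$ is prime in $A$: granted this, taking $N$ minimal gives $N=0$, since otherwise $x_1\mid x_1^N f$ in $S_m$ would force $x_1\mid x_1^N f$ in $A$, contradicting minimality.

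To show $x_1$ is prime in $A$, equivalently that $A/x_1A$ is an integral domain, I would use equations~\eqref{ortat:1}, \eqref{ortat:2}, and Lemma~\ref{ortat:3} to reduce the complete-intersection relations of Lemma~\ref{orKbasis} modulo $x_1$. They collapse to identities of the form $\xi_j^q\equiv\psi_1(\xi_j)\pmod{x_1}$, where each right-hand side is an explicit element of $Q^H/x_1Q^H=\field_q[Y_i,X_i\mid i=2,\dots,m]$. This exhibits $A/x_1A$ as an iterated Artin--Schreier-type extension of the polynomial ring $\field_q[\overline{N(y_1)},Y_i,X_i]$ (where $\overline{N(y_1)}$ denotes the reduction of $N(y_1)$ modulo $x_1$), built by adjoining each $\xi_j$ as a root of a monic polynomial $T^q-\psi_1(\xi_j)|_{x_1=0}$. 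The main obstacle I anticipate is verifying that each such adjunction preserves integrality, that is, that each of these Artin--Schreier-like polynomials remains irreducible over the ring constructed so far; this step should follow from a careful degree and lead-term analysis using the lexicographic Khovanskii basis from Lemma~\ref{orKbasis}, but it is the part of the argument requiring the most care.
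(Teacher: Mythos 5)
Your overall plan — localize at $x_1$ to see that $A$ and $S_m^H$ agree after inverting $x_1$, then prove $x_1$ is prime in $A$ and descend — is the same skeleton the paper uses. But there are two places where the proposal has genuine gaps.

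First, the ``minimal $N$'' descent is not justified as written. From $x_1$ being prime in $A$, it does \emph{not} follow that divisibility by $x_1$ in $S_m$ implies divisibility by $x_1$ in $A$: primality of $x_1$ says that $A/x_1A$ is a domain, whereas what you need is that the map $A/x_1A\to S_m/x_1S_m$ is injective (equivalently $x_1A=x_1S_m\cap A$), which is strictly stronger. You can repair this: since $A$ contains the hsop $Q^H[N(y_1)]$, the ring $S_m^H$ is integral over $A$; taking a monic equation for $f$ over $A$, multiplying by $x_1^{Nk}$ and reducing mod $x_1A$ shows $\bar g^k=0$ in $A/x_1A$, and primality then gives $g\in x_1A$. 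Alternatively, and this is what the paper does, invoke Nagata's lemma (Matsumura, Theorem 20.2): $A[x_1^{-1}]$ is a localization of a polynomial ring, hence a UFD, and $x_1$ is prime, so $A$ is a UFD, hence normal, and therefore equal to its integral closure $S_m^H$ inside $\F(A)=\F(S_m^H)$. Either route works, but as stated your deduction does not.

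Second, and more seriously, the crux of the theorem — proving that $A/x_1A$ is an integral domain — is left as a plan rather than a proof, and the suggested method (``degree and lead-term analysis using the lexicographic Khovanskii basis'') is not how this is actually settled. The relations $\xi_j^q\equiv\psi_1(\xi_j)\pmod{x_1}$ present $A/x_1A$ as an iterated adjunction of $q$-th roots (these are purely inseparable, not Artin--Schreier, extensions), and proving each adjunction gives a degree-$q$ extension is equivalent to the domain claim; a Khovanskii-basis lead-term computation will not see irreducibility. The paper proves it by showing that the map $\overline{\psi_1}\colon A/x_1A\to S_m$ induced by $\psi_1$ is injective. After reducing to a freeness statement, that amounts to proving the monomials $\xi_0^{e_0}\cdots\xi_{n-1}^{e_{n-1}}$ with $0\le e_i<q$ generate a free $\field_q[y_1^q,\dots,x_1^q]$-module of rank $q^n$, which the authors establish via a Galois-theoretic argument: since $\field_q(V)^G\subset\field_q(V)$ is Galois and the pointwise stabiliser of $\{y_1^q,\dots,x_1^q\}$ is trivial, one gets $[\field_q(V):\field_q(y_1^q,\dots,x_1^q)]=q^n$, and adjoining the $\xi_i$ (each satisfying $t^q-\xi_i^q$) one at a time forces each step to have degree exactly $q$. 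That is the mathematical heart of the theorem, and your proposal identifies its location correctly but does not supply an argument for it.
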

\begin{proof} Since $Q^H[N(y_m)]$ is generated by a homogeneous system of parameters and  $S_m^H[x_m^{-1}]=Q^H[\xi_1,x_m^{-1}]$,
it is sufficient to prove that the ring $A$ is integrally closed in its field of fractions.
 We do this by proving $x_m$ is prime in $A$ which is sufficient by 
 \cite[Theorem 20.2]{matsumura-commringtheo:86}.

It follows from Lemma~\ref{orKbasis} that $A$ is Cohen-Macaulay and has rank $q^{n-2}$ over
$Q^H[N(y_m)]$ with a basis given by the monomial factors of 
$(\xi_1\cdots\xi_{n-2})^{q-1}$.
We will show that $x_m$ is prime in $A$ by proving that $A/x_mA$ is an integral domain.

Recall that $\psi_{[m,1]}:S_m\to S_m$  is the algebra homomorphism determined by 
$\psi_{[m,1]}(a)=a^q-ax_m^{q-1}$ for $a$ homogeneous of degree one
(see Section~\ref{setting_section}).
Using Lemma~\ref{phi_bar_lemma},
 $x_mS_m= \ker(\psi_{[m,1]})$.
Let $\overline{\psi_{[m,1]}}$ denote the algebra map from $A/x_mA$ to $S_m$ induced by $\psi_{[m,1]}$. 
If $\overline{\psi_{[m,1]}}$ is injective, then $A/x_mA$ is an integral domain, proving $x_mA$ is a prime ideal and $A$ is normal.

  Define 
  $\overline{Q}:=\field_q[N(y_m),Y_{m-1},\ldots,Y_1,X_1,\ldots,X_{m-1}]$ (the subalgebra generated by $\{N(y_m)\}\cup\mathcal{W}$).
  Let $\BH$ denote the set of monomial factors of $(\xi_1\cdots\xi_{n-2})^{q-1}$.
  Using the $Q^H$-module structure of $A$, we see that
  $A/x_mA$ is a free $\overline{Q}$-module with a basis  $\BH$.
  By identifying $\overline{Q}$ with the submodule of $A/x_mA$ generated by $1$ we see that $A/x_mA$ is a complete intersection with relations
  $\xi_{i}^q =\psi_{[m,1]}(\xi_{i})$
  for $i=1,\ldots, n-2 $. From Lemma~\ref{phi_bar_lemma},
 for $f\in S_m$, $\psi_{[m,1]}\circ\psi_{[m,1]}(f)=\psi_{[m,1]}(f)^q$.
  Therefore $\psi_{[m,1]}(A)$ is a module over $\field_q[\psi_{[m,1]}(N(y_m)),Y_{m-1}^q,\dots ,X_{m-1}^q]$ with module generators $\{\psi_{[m,1]}(\beta)\mid \beta\in\BH\}$. 
  To prove that $\overline{\psi_{[m,1]}}$ is injective, it is sufficient to show that $\overline{\psi_{[m,1]}}(A/x_mA)=\psi_{[m,1]}(A)$ is a free module over 
   $$\psi_{[m,1]}(\overline{Q})=\field_q[\psi_{[m,1]}(N(y_m)),Y_{m-1}^q,\dots, X_{m-1}^q]$$ 
   with module generators
   $\{\psi_{[m,1]}(\beta)\mid \beta\in\BH\}$. 
  Note that for $\beta\in\BH$, we have $\psi_{[m,1]}(\beta)\in\field_q[Y_{m-1}^q,\dots, X_{m-1}^q]$.
  Therefore, it is sufficient to show that the 
  module generated by $\{\psi_{[m,1]}(\beta)\mid \beta\in\BH\}$
  over $\field_q[Y_{m-1}^q,\dots, X_{m-1}^q]$ is free. 
This is equivalent to showing the  
  $\field_q[y_{m-1}^q,\dots, x_{m-1}^q]$-module
  generated by $\{\xi_1^{e_1}\cdots\xi_{n-2}^{e_{n-2}}\mid e_i<q\}\subset S_{m-1}$
  is a free module of rank $q^{n-2}$.  This follows from the next lemma.
\end{proof}

\begin{lem}
    Suppose $G$ is a subgroup of $\gl{n}{q}$ and
    the invariant field $\F(S_m^G)$ is generated by
    $\{f_1,\ldots,f_{n}\}\subset S_m^G$.
    Then the $\field_q[y_{m}^q,\dots,y_1^q,x_1^q,\ldots, x_{m}^q]$-module
  generated by $\{f_1^{e_1}\cdots f_{n}^{e_{n}}\mid e_i<q\}\subset S_{m}$
  is a free module of rank $q^{n}$.
\end{lem}

\begin{proof}  
  Since the extension $\field_q(f_1,\ldots,f_{n})=\F(S_m)^G\subset \F(S_m)$ is a Galois extension,
  for any intermediate extension $\F(S_m)^G\subset  E\subset\F(S_m)$, we have $E=\F(S_m)^L$  
  for some subgroup $L\leq G$. 
 Consider $E=\F(S_m)^G[y_m^q,...,x_m^q]$. 
Since the pointwise stabiliser of $\{y_m^q,...,x_m^q\}$ in 
$\gl{n}{q}$ is $\{1\}$,
we have $\F(S_m)^G[y_m^q,...,x_m^q]=\F(S_m)$.
On the other hand, $\field_q(y_m^q,...,x_m^q)\subset \F(S_m)$ is a field extension of degree $q^n$.
Construct this extension by iteratively adjoining the $f_{i}$. 
Since $f_{i}$ is a root of $t^q-f_{i}^q\in \field_q(y_m^q,...,x_m^q)[t]$, at each step the extension has degree at most $q$. 
To complete the construction in $n$ steps, we need each step to be an extension of degree $q$.
We can adjoin the $f_{i}$ in any order. 
Therefore $t^q-f_{i}^q$ is irreducible over $\field_q(y_m^q,...,x_m^q)[f_{j} \mid j\not=i]$.
 Hence $\{f_1^{e_1}\cdots f_{n}^{e_{n}}\mid e_i<q\}$ is a basis for $\F(S_m)$ as a vector space over
  $\field_q(y_m^q,...,x_m^q)$. Thus
  $\{f_1^{e_1}\cdots f_{n}^{e_{n}}\mid e_i<q\}$ is linearly independent over $\field_q(y_m^q,...,x_m^q)$
 and generates a free  $\field_q[y_m^q,...,x_m^q]$-module of rank $q^n$.
 \end{proof}

The rest of Section \ref{HgroupSec} is devoted to the proof of 
Lemma \ref{ortat:3}.
For a non-zero element $h \in S=S_m$ we denote by $\nu_m(h)$ the greatest integer $n$ such that $h \in (x_m)^n S$, i.e., $x_m^{\nu_m(h)}$ divides $h$ in $S$
but $x_m^{\nu_m(h)+1}$ does not.  
Let $d$ be a non-negative integer.  We write $||d||_q=||d||$ to denote the digit sum in base $q$,  i.e., if $d=\sum_{i=0}^\ell d_i q^i$ with $0 \leq d_i < q$ for all $i$
 then $||d||_q = \sum_{i=0}^\ell d_i$. 
 We will say that a polynomial $f= f(y_m)=\sum_k r_k y_m^k \in S$ with each $r_k \in Q$ is {\it $(y_m,x_m)$-compliant} 
 if $\nu_m(r_k) \geq ||k|| - 1$ for all $k \geq 1$. We say that $f$ is {\it strongly compliant} if $\nu_m(r_k) \geq ||k||$ for all $k \geq 1$.
 Observe that $\xi_{i}$ is strongly compliant for all $i$ and that the set of strongly compliant polynomials is closed under addition.
 Furthermore, since $||k+\ell||\leq ||k||+||\ell||$,  the set of strongly compliant polynomials is closed under multiplication.

 \begin{lem} \label{orstcomp} If $f\in S^H$ is strongly compliant and $\deg_{y_m}(f)<q^{n-2}$, then $f\in Q^H[\xi_{1},\xi_2,\ldots,\xi_{n-2}]$.
 \end{lem}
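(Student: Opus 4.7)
The plan is to induct on $d := \deg_{y_1}(f)$. The base case $d=0$ is immediate, since then $f \in Q \cap S^H = Q^H \subset Q^H[\xi_0,\ldots,\xi_{n-3}]$. For $d \geq 1$, write $f = \sum_{k=0}^{d} c_k y_1^k$ with $c_k \in Q$. A hook element $h \in H$ sends $y_1$ to $y_1 + \ell_h$ with $\ell_h \in Q$, so expanding $f \cdot h$ the coefficient of $y_1^d$ is $c_d \cdot h$; $H$-invariance of $f$ therefore forces $c_d \in Q^H$.

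Since $d < q^{n-2}$, I can write $d = \sum_{i=0}^{n-3} d_i q^i$ in base $q$, and form $g := \xi_0^{d_0}\xi_1^{d_1}\cdots\xi_{n-3}^{d_{n-3}}$. From the explicit expression $\xi_i = y_1^{q^i}x_1 + y_1 x_1^{q^i} + \sum_{j>1}(y_j^{q^i}x_j + y_j x_j^{q^i})$ one reads off that $\xi_i$ has $y_1$-degree $q^i$ with leading coefficient $x_1$, so $\deg_{y_1}(g) = d$ and the leading $y_1$-coefficient of $g$ is $x_1^{\|d\|}$. Strong compliance of $f$ gives $\nu_1(c_d) \geq \|d\|$, so $c_d = x_1^{\|d\|} c'$ for some $c' \in Q$; applying an arbitrary $h \in H$ to this identity (using $x_1 \in Q^H$ and $c_d \in Q^H$, together with the fact that $Q$ is a domain) shows $c' \in Q^H$.

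Set $f' := f - c' g$. By construction $f' \in S^H$, and the $y_1^d$-terms cancel so $\deg_{y_1}(f') < d$. The key observation is that the class of strongly compliant polynomials forms a subring of $S$ containing $Q$ and each $\xi_i$; the only nontrivial closure check is the product case, where one uses the subadditivity $\|i+j\| \leq \|i\| + \|j\|$ of the base-$q$ digit sum (as already noted in the paper). Hence $c'g$ is strongly compliant, and so is $f'$. The inductive hypothesis applies to $f'$ and delivers $f' \in Q^H[\xi_0,\ldots,\xi_{n-3}]$, and therefore so does $f = f' + c'g$.

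The only substantive obstacle is bookkeeping: one must confirm that strong compliance propagates through the subtraction, and that each of $\xi_0,\ldots,\xi_{n-3}$ is itself strongly compliant. Both checks are elementary given the digit-sum subadditivity and the explicit formula for $\xi_i$, so the induction proceeds without friction.
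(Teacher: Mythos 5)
Your proof is correct and follows essentially the same route as the paper's: induction on $\deg_{y_1}(f)$, with the top $y_1$-coefficient shown to lie in $Q^H$ via the fact that $H$ translates $y_1$ by elements of $Q$, then subtracting a $Q^H$-multiple of the monomial $\xi_0^{d_0}\cdots\xi_{n-3}^{d_{n-3}}$ whose leading $y_1$-coefficient $x_1^{\|d\|}$ is extracted from $c_d$ via strong compliance. The only cosmetic difference is that the paper writes the correction term as $c_\ell x_1^{-\|\ell\|}\beta$ while you factor $c_d = x_1^{\|d\|}c'$ first; these are the same step.
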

  \begin{proof} 
  The proof is by induction on $\ell=\deg_{y_m}(f)$. For $\ell=0$, $f=r_0\in Q^H$.
  Suppose the result is true for all strongly compliant invariants with $y_m$-degree $d$ and $0\leq d<\ell<q^{n-2}$. 
  
  Consider $f=\sum_{k=0}^{\ell}r_k y_m^k \in S^H$ with $r_k\in Q$ and $r_{\ell}\not=0$. 
Since $y_m-y_m\cdot g \in Q$ for all $g \in H$, it follows that $r_{\ell}\in Q^H$. 
  Write $\ell=\sum_{i=0}^{n-3}\ell_iq^i$ with $0\leq \ell_i<q$. Define $\beta:=\prod_{j=0}^{n-3}\xi_{j+1}^{\ell_j}$. 
  Then $\lt(\beta)=y_m^{\ell}x_m^{||\ell||}$ using the lex order.
  Since $f$ is strongly compliant, $x_m^{||\ell||}$ divides $r_{\ell}$. Define $h:=f-r_{\ell}x_m^{-||\ell||}\beta$. Observe that $h\in S^H$.
  Since $\beta$ is strongly compliant, $h$ is strongly compliant with $\deg_{y_m}(h)<\ell$.
  Therefore, using the induction hypothesis, $h\in Q^H[\xi_{1},\xi_2,\ldots,\xi_{n-2}]$. Thus $f=h+r_{\ell}x_m^{-||\ell||}\beta\in Q^H[\xi_{1},\xi_2,\ldots,\xi_{n-2}]$.
   \end{proof}

 We will prove by induction on $m$ that $N(y_m)$ is $(y_m,x_m)$-compliant. As a consequence,  $x_mN(y_m)-\xi_{n-1}$ is strongly compliant.
 Lemma \ref{ortat:3} then follows from Lemma \ref{orstcomp}.
For $m=1$, we have $N(y_1)=y_1$, which is $(y_1,x_1)$-compliant.
 
 The following two lemmas are used in the proof of the induction step.

\begin{lem}\label{divisible by 9}
  Suppose that $q-1$ divides $k$, then $q-1$ divides $||k||_q$.
\end{lem}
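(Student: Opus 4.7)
The plan is to use the classical digit-sum identity modulo $q-1$. The observation is that $q\equiv 1\pmod{q-1}$, so every power $q^i\equiv 1\pmod{q-1}$. Writing $k=\sum_{i=0}^r k_iq^i$ with $0\le k_i<q$, reducing modulo $q-1$ gives
$$k \equiv \sum_{i=0}^r k_i \cdot 1 = \|k\|_q \pmod{q-1}.$$
Hence $k$ and $\|k\|_q$ are congruent modulo $q-1$, and in particular if $(q-1)\mid k$ then $(q-1)\mid \|k\|_q$.

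There is no real obstacle here; the only thing to note is that $\|k\|_q$ is defined via the base-$q$ expansion (so $0\le k_i<q$), which is exactly what lets the congruence $q^i\equiv 1$ collapse the sum to the digit sum. The proof is a one-line application of this reduction.
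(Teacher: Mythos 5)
Your proof is correct and follows exactly the same route as the paper's: write $k$ in base $q$, note $q^i\equiv 1\pmod{q-1}$, and conclude $k\equiv\|k\|_q\pmod{q-1}$. Nothing to add.
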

\begin{proof}
  Write $k = \sum_s k_s q^s$ where $0 \leq k_s \leq q-1$ for all $s$.
  Since $q^s \equiv 1 \pmod{(q-1)}$ for all $s$ we have
  $$0 \equiv k = \sum_s k_s q^s \equiv \sum_s k_s = ||k||_q \pmod{(q-1)}\ 
  $$ 
  as required.
\end{proof}

Let $R'=\field_q[y_{m-1},y_{m-2},\dots,y_1,x_2,x_3,\dots,x_m]$.  Then
$S=R'[y_m,x_1]$.
Define $T:=S[t]$.   We extend the definition of $(y_m,x_m)$-compliance to $T$ by declaring that
$f=\sum_k r_k y_m^k$ with each
$r_k \in R'[x_1,t]$ is $(y_m,x_m)$-compliant
if $\nu_m(r_k) \geq ||k||-1$ for all $k \geq 1$.

Given $f(y_m,x_1)\in T$, we define
 $$\T_t(f) :=\prod_{a \in \field_q} f(y_m+a t,x_1-a x_m)\in T\ .$$

  \begin{lem}\label{orthcompl}
    If $f$ is $(y_m,x_m)$-compliant then $\T_t(f)$ is $(y_m,x_m)$-compliant.
  \end{lem}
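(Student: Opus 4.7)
The plan is to first extend compliance to $T = S[t,u]$ in the natural way: regard $f \in T$ as $f = \sum_k c_k(t,u)\, y_1^k$ with $c_k \in Q[t,u]$, and require $\nu_1(c_k) \geq ||k||_q - 1$ for all $k \geq 1$. Then I would analyze $\T_{(y_1,u,t,x_1)}(f) = \prod_{a \in \field_q} f(y_1+au, t-ax_1)$ by direct expansion, bounding the $x_1$-adic valuation of the coefficient of each $y_1^K$.

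First I would handle a single substitution $\sigma_a : y_1 \mapsto y_1+au$, $t \mapsto t-ax_1$. Expanding $(y_1+au)^k$ and $(t-ax_1)^\ell$, the key combinatorial input is that in characteristic $p$ a binomial coefficient $\binom{k}{i}$ vanishes unless the base-$p$ digits of $i$ are bounded by those of $k$, which also controls the digit sum $||k-i||_q$. Each surviving $(t-ax_1)^j$ term contributes a factor of $x_1^j$ and thus raises $\nu_1$ by exactly $j$, so a coefficient-by-coefficient check establishes that each $\sigma_a$ preserves compliance.

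Next I would analyze the full product, where the norm identities $\prod_{a \in \field_q}(y_1+au) = y_1^q - y_1 u^{q-1}$ and $\prod_{a \in \field_q}(t-ax_1) = t^q - tx_1^{q-1}$ provide the Frobenius-like building blocks. For a monomial $f = y_1^k t^\ell$ one has $\T(f) = (y_1^q - y_1 u^{q-1})^k(t^q - tx_1^{q-1})^\ell$, and every $y_1$-exponent $K$ that appears is $\equiv k \pmod{q-1}$. Lemma~\ref{divisible by 9} is the crucial ingredient here: since $q \equiv 1 \pmod{q-1}$, its proof gives $||K||_q \equiv K \equiv k \equiv ||k||_q \pmod{q-1}$, so the digit sums appearing in $\T(f)$ differ from those in $f$ by a multiple of $q-1$. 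This is precisely the $q-1$ boost needed to upgrade the naive product-of-compliants estimate (which loses one unit of compliance slack per factor, giving only $\nu_1 \geq ||K||_q - q$ in a $q$-fold product) to the desired bound $\nu_1 \geq ||K||_q - 1$.

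The main obstacle I anticipate is bridging from the monomial analysis to arbitrary compliant $f$. Although $\T$ is multiplicative ($\T(fg) = \T(f)\T(g)$), compliance is not closed under products (only strong compliance is, as noted earlier in the section), so one cannot naively decompose $f$. The cleanest strategy is an induction on $\deg_{y_1}(f)$: the base case $f \in Q[t,u]$ is immediate since $\T(f) \in Q[t,u]$ has no positive $y_1$-degree contribution, and in the inductive step we peel off the leading term $c_k y_1^k$ and use $\T(c_k y_1^k) = \T(c_k)(y_1^q - y_1 u^{q-1})^k$ explicitly, with the cross-terms against $\T(f - c_k y_1^k)$ controlled by the single-substitution analysis and Lemma~\ref{divisible by 9}.
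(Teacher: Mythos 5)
Your high-level analysis correctly identifies the two structural pillars: the naive product estimate loses one unit of compliance slack per factor (giving only $\nu_1 \geq ||K||-q$), and the rescue must be a $q-1$ boost coming from Lemma~\ref{divisible by 9} together with the arithmetic of $X^q - X = \prod_{a\in\field_q}(X-a)$. The genuine gap is in the proposed induction on $\deg_{y_1}(f)$. Since $\T$ is multiplicative rather than additive, writing $f = c_k y_1^k + h$ and expanding $\T(f) = \prod_{a\in\field_q}\bigl(\sigma_a(c_k y_1^k) + \sigma_a(h)\bigr)$ produces $2^q$ terms indexed by subsets $S\subseteq\field_q$. The extremes $S=\emptyset$ and $S=\field_q$ give $\T(h)$ and $\T(c_k y_1^k)$, which you can handle, but for $\emptyset\neq S\neq\field_q$ the term $\prod_{a\in S}\sigma_a(c_k y_1^k)\prod_{a\notin S}\sigma_a(h)$ is again a product of $q$ compliant factors, so the single-substitution analysis yields nothing better than $\nu_1\geq ||K||-q$. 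The $q-1$ boost actually comes from the vanishing of the elementary symmetric functions $s_d(\field_q)$ for $0<d<q-1$, which is visible only after collecting the contributions over \emph{all} $a\in\field_q$ simultaneously (equivalently, over all $S$ of a fixed size); at that point you must fully expand every $\sigma_a$-image anyway, and the peeling-off of the leading monomial has gained you nothing.

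The way to repair this is to delay the specialization to $\field_q$: work in $T[a_1,\ldots,a_q]$ with formal indeterminates, expand the $q$-fold product all at once, observe that the coefficient polynomial $h_{k,i,j,\ell}(a_1,\ldots,a_q)$ in front of each $y_1^k u^i t^j x_1^\ell\prod_s c_{k_s,j_s}$ is symmetric of degree $i+\ell$, and only then specialize the $a_s$ to the elements of $\field_q$. The specialization kills $h_{k,i,j,\ell}$ unless its degree is $0$ (those terms assemble to $f^q$, which is compliant since $||kq||=||k||$) or a positive multiple of $q-1$, to which Lemma~\ref{divisible by 9} applies to give $||i+\ell||\geq q-1$. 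A digit-sum computation using $||k_s|| = ||k_s-i_s||+||i_s||$ (the Kummer/Lucas fact you cite) then yields $\ell+\sum_s ||k_s|| \geq ||k||+(q-1)$, hence $\nu_1\geq \ell+\sum_s(||k_s||-1)\geq ||k||-1$. This is the argument the paper gives.
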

\begin{proof}
   We work in the ring $\widetilde{T} := T[\alpha_1,\alpha_2,\dots,\alpha_q]$ where $\alpha_1,\alpha_2,\dots,\alpha_q$ are $q$ new indeterminates.
   Write $f=\sum_k \sum_j r_{k,j} y_m^k x_1^j$.  We have $\nu_m(r_{k,j}) \geq ||k||-1$ for all $k$ and $j$.
   Consider
   \begin{align*}
   \widetilde{f} &=\prod_{s=1}^q  f(y_m+\alpha_s t, x_1- \alpha_s  x_m)\in \widetilde{T}\\
    &= \prod_{s=1}^q \left(\sum_k \sum_j r_{k,j}(y_m+\alpha_s t)^k (x_1-\alpha_s  x_m)^j \right)\\
    &= \prod_{s=1}^q  \left(\sum_k \sum_j r_{k,j}\bigg(\sum_{i=0}^k \binomial{k}{i} y_m^{k-i} (\alpha_s t)^i \bigg)\bigg(\sum_{\ell=0}^j \binomial{j}{\ell} x_1^{j-\ell} (- \alpha_s x_m)^\ell \bigg)\right)
   \end{align*}
   Thus $\widetilde{f}$ is a linear combination of expressions of the form
   $$\prod_{s=1}^q \binomial{k_s}{i_s} r_{k_s,j_s} y_m^{k_s-i_s}(\alpha_s t)^{i_s}  \binomial{j_s}{\ell_s} x_1^{j_s-\ell_s}(-\alpha_s  x_m)^{\ell_s}$$
   which can be written as
 $$  \left(\prod_{s=1}^q \binomial{k_s}{i_s} r_{k_s,j_s} (\alpha_s)^{i_s}  \binomial{j_s}{\ell_s} (-\alpha_s)^{\ell_s} \right) y_m^k t^i x_1^j x_m^\ell
   $$ where $k=\sum_{s=1}^q (k_s-i_s)$, $i=\sum_{s=1}^q i_s$, $j=\sum_{s=1}^q (j_s-\ell_s)$ and $\ell=\sum_{s=1}^q \ell_s$.  

   Clearly $\widetilde{f}$ is invariant under any permutation of the $\alpha_s$.   Thus we may express 
   $\widetilde{f}$ using the elementary symmetric functions $E_i(\alpha_1,\alpha_2,\dots,\alpha_q)$ of degree $i$ 
   in the $\alpha_i$, i.e.,
   $\widetilde{f}=\sum_{\vec{e}} h_{(e_1,e_2,\dots,e_q)} E_1^{e_1} E_2^{e_2} \cdots  E_q^{e_q}$ with
   $h_{(e_1,e_2,\dots,e_q)} \in T$.  When we specialize the elementary symmetric functions $E_i(\alpha_1,\alpha_2,\dots,\alpha_q)$ to the values $E_i(\field_q)$, $\widetilde{f}$ specializes to $\T_t(f)$.
   
   Note that $E_i(\field_q)=0$ for $i \neq q-1$ and $i \geq 1$.  This follows from  
   $$X^q-X = \prod_{a\in \field_q} (X-a)= \sum_{i=0}^q (-1)^{i} E_i(\field_q) X^{q-i}.$$
Therefore in the expression for $\widetilde{f}$ it suffices to consider only terms of the form
   $h_{(0,0,\dots,0,e,0)}E_{q-1}^e$.  
   
Terms occurring in $h_{(0,0,\dots,0,e,0)}$ satisfy $i+\ell=e(q-1)$.  The term $h_{(0,0,\dots,0,0,0)}$   
   when $i=\ell=0$ is just $h_{(0,0,\dots,0,0,0)}=\prod_{s=1}^q  f(y_m, x_1) = f(y_m,x_1)^q$.
     Since $||kq||=||k||$ it is clear that 
     $h_{(0,0,\dots,0,0,0)}$ is $(y_m,x_m)$-compliant.

   We may assume that the term under consideration is non-zero which implies $\prod_{s=1}^q \binomial{k_s}{i_s}\binomial{j_s}{\ell_s}\neq 0$
   which in particular implies that $||k_s|| = ||k_s-i_s|| + ||i_s||$ for all $s$. 
       This fact known
    to Kummer \cite[pp.~115--116]{kummer:1852} in 1852 follows easily from Lucas' Lemma.
Thus $$||k|| = ||\sum_{s=1}^q (k_s-i_s)|| \leq  \sum_{s=1}^q ||k_s-i_s|| = \sum_{s=1}^q (||k_s|| - ||i_s||)\ .$$

For $h_{(0,0,\dots,0,e,0)}$ with $e\geq 1$ we have $i+\ell=e(q-1)$ is a positive multiple of $q-1$.
Thus $||i+\ell||$ is also a positive multiple of $q-1$ by Lemma~\ref{divisible by 9}. 
 In particular $||i+\ell|| \geq q-1$.

   Therefore
   \begin{align*}
   \ell+\sum_{s=1}^q ||k_s|| &\geq \ell+ ||k|| + \sum_{s=1}^q ||i_s||
     \geq \ell + ||k|| + ||\sum_{s=1}^q i_s||
     = \ell + ||k|| +||i|| \\
    &\geq ||\ell|| + ||k|| +|| i ||
    \geq ||k|| + || i+\ell ||
        \geq ||k|| + (q-1).
   \end{align*}
Thus $\nu_m(t^i x_1^j x_m^\ell\prod_{s=1}^q r_{k_s,j_s}) \geq \ell+\sum_{s=1}^q (||k_s||-1) \geq ||k||-1$ as required.
\end{proof}

Let $\overline{H}$ denote the pointwise stabiliser of $\{f_1,e_1\}$ in $H=H_m$ and observe that $\overline{H}$ also stabilises $\{y_1,x_1\}$.
 Using a relabeling of the variables which takes $y_i$ to $y_{i+1}$and $x_i$ to $x_{i+1}$, we can identify $H_{m-1}$ with
$\overline{H}$.
By induction $N^{H_{m-1}}(y_{m-1})$ is $(y_{m-1},x_{m-1})$-compliant.
Using the identification of $H_{m-1}$ and $\overline{H}$, this 
implies that $N^{\overline{H}}(y_m)$ is $(y_m,x_m)$-compliant
(we use $N^{\overline{H}}(a)$ to denote the orbit product of $a$ over $\overline{H}$).

It is easy to see that $\overline{H}$ is a normal subgroup of $H$.
Furthermore, we can identify  $H/\overline{H}$ with the subgroup of $H$ consisting of elements which stabilise all variables except $y_1$, $y_m$, and $x_1$.
Observe that $$N(y_m)=\prod_{a,b\in\field_q}N^{\overline{H}}(y_m+by_1+ax_1-abx_m)$$

Since $N^{\overline{H}}(y_m)$ is $(y_m,x_m)$-compliant
and $\deg_{x_1}(N^{\overline{H}}(y_m))=0$,
we can apply Lemma \ref{orthcompl}  to $N^{\overline{H}}(y_m)$
with $t=x_1$ to show that
$$M(y_m,x_1) := \T_{x_1}(N^{\overline{H}}(y_m))=\prod_{a\in\field_q} N^{\overline{H}}(y_m+ax_1)$$
is $(y_m,x_m)$-compliant. Applying Lemma \ref{orthcompl} to $M(y_m,x_1)$ with $t=y_1$, we see that
 \begin{align*}
\T_{y_1}(M) &= \prod_{b \in \field_q} M(y_m + b y_1,x_1-bx_m)\\
&= \prod_{b \in \field_q} \bigg(\prod_{a\in \field_q} N^{\overline{H}}(y_m+b y_1+a (x_1-bx_m)\bigg)\\
&  = \prod_{b \in \field_q} \prod_{a\in \field_q} N^{\overline{H}}(y_m+b y_1+a x_1 - ab x_m)\\
 & = N(y_m)
   \end{align*}
  is $(y_m,x_m)$-compliant. 
  
  Since $N(y_m)$ is $(y_m,x_m)$-compliant, $x_m N(y_m)$ is strongly $(y_m,x_m)$-compliant.
Thus $x_m N(y_m)-\xi_{n-1}$ is strongly $(y_m,x_m)$-compliant.
Since $\deg_{y_m}(x_m N(y_m)-\xi_{n-1})<q^{n-2}$, using Lemma \ref{orstcomp},
 we see that $$x_m N(y_m)-\xi_{n-1}\in Q^H[\xi_1,\dots,\xi_{n-2}].$$ This completes the proof of Lemma \ref{ortat:3}.
Proving  Lemma \ref{ortat:3} completes the proof of Lemma \ref{orKbasis} which in turn completes the proof of Theorem \ref{orHcal}.

\subsection{Computing the Sylow Invariants}
Recall that $H$ is a normal subgroup of $P_m$.
Furthermore, identifying $P_{m-1}$ with pointwise stabiliser of 
$\{x_m,y_m\}$ in $P_m$, we have $P_m=P_{m-1}H$.
Therefore
$S_m^{P_m}=(S_m^H)^{P_{m-1}}$. 
Let $\BH$ denote the monomial factors of $\prod_{i=1}^{n-2}\xi_{i}^{q-1}$.
Using the results of Section \ref{HgroupSec}, we see that $S_m^H$ is the free module over
$Q^H[N(y_m)]$ with basis given by the elements of $\BH$.
 Thus $$S_m^H=\bigoplus_{\gamma\in\BH} Q^H[N(y_m)]\gamma.$$
 Since the elements of $\BH$ are $P_m$-invariant, we have
 $$S_m^{P_m}=(S_m^H)^{P_{m-1}}=\bigoplus_{\gamma\in\BH }(Q^H[N(y_m)])^{P_{m-1}}\gamma.$$
 Write $\overline{S}$ for the subalgebra of $Q^H$ constructed by omitting the generator $x_m$.
 Then $Q^H[N(y_m)]=\overline{S}\otimes \field_q[x_m,N(y_m)]$ and
  $$(Q^H[N(y_m)])^{P_{m-1}}=\overline{S}^{P_{m-1}}\otimes \field_q[x_m,N(y_m)].$$
  The action of $P_{m-1}$ on $\overline{S}$ is equivalent to the action of  $P_{m-1}$ on $S_{m-1}$.
  This gives  an algebra isomorphism from $S_{m-1}^{P_{m-1}}$ to $\overline{S}^{P_{m-1}}$ which takes elements of degree $d$ to elements of degree $qd$.
  This isomorphism is the restriction of $\psi_{[m,1]}$ to 
  $S_{m-1}^{P_{m-1}}\subset S_m$.

\begin{thm}\label{sylow_thm}
  $S_m^{P_m}$ is Cohen-Macaulay and is generated by the orbit products of the variables and the $\xi_{i}$.
  \end{thm}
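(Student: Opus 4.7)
The plan is induction on $m$. The base case $m=1$ is immediate: the odd prime $p$ does not divide $|\orthp{2}{q}|=2(q-1)$, so $P_1$ is trivial and $S_1^{P_1}=\field_q[y_1,x_1]=\field_q[N(y_1),N(x_1)]$ is polynomial. For the inductive step I would feed the block decomposition already established,
$$S_m^{P_m}=\bigoplus_{\gamma\in\BH}\bigl(\overline{S}^{P_{m-1}}\otimes\field_q[x_1,N(y_1)]\bigr)\gamma,$$
in which $\BH$ consists of monomials in $\xi_0,\ldots,\xi_{n-3}$, through the graded algebra isomorphism $\varphi=\psi_1\circ\sigma:S_{m-1}^{P_{m-1}}\to\overline{S}^{P_{m-1}}$. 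Cohen-Macaulayness then propagates cleanly: by induction $S_{m-1}^{P_{m-1}}$ is Cohen-Macaulay, hence so is $\overline{S}^{P_{m-1}}$; adjoining the two polynomial variables $x_1$ and $N(y_1)$ preserves the property; and since $S_m^{P_m}$ is finitely generated free over this base with basis $\BH$, any hsop for the base is a regular sequence on $S_m^{P_m}$, so $S_m^{P_m}$ is itself Cohen-Macaulay.

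For the generating statement, since the $\BH$-basis already consists of monomials in the $\xi_i$ and the factor $\field_q[x_1,N(y_1)]=\field_q[N(x_1),N(y_1)]$ is generated by orbit products, it suffices to show that $\varphi$ carries each inductive generator of $S_{m-1}^{P_{m-1}}$ into the subalgebra of $S_m$ generated by $P_m$-orbit products of variables together with the $\xi_i$. For $\xi_i^{(m-1)}$, one has $\sigma(\xi_i^{(m-1)})=\xi_i^{(m)}-y_1^{q^i}x_1-y_1x_1^{q^i}$ and $\psi_1(x_1)=0$, so $\varphi(\xi_i^{(m-1)})=\psi_1(\xi_i^{(m)})$, which by equations (\ref{ortat:1}) and (\ref{ortat:2}) is a polynomial in $\xi_0^{(m)},\ldots,\xi_{i+1}^{(m)}$ and $x_1$. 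For a $P_{m-1}$-orbit product $N^{P_{m-1}}(v)$ of a variable $v$, I would use $P_m=P_{m-1}H$ together with the observation that $H$ acts on $\sigma(v)$ only by adding scalar multiples of $x_1$ to decompose the $P_m$-orbit of $\sigma(v)$ as $\{w-ax_1:w\in P_{m-1}\cdot\sigma(v),\,a\in\field_q\}$. Combined with the $P_{m-1}$-equivariance of $\psi_1$ (which holds because $P_{m-1}$ fixes $x_1$), this yields
$$N^{P_m}(\sigma(v))=\prod_{w\in P_{m-1}\cdot\sigma(v)}\prod_{a\in\field_q}(w-ax_1)=\prod_w\psi_1(w)=\varphi\bigl(N^{P_{m-1}}(v)\bigr),$$
so $\varphi$ matches each $P_{m-1}$-orbit product with the corresponding $P_m$-orbit product in $S_m$.

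The main obstacle I anticipate is precisely this equivariance-and-orbit-decomposition identity $\varphi(N^{P_{m-1}}(v))=N^{P_m}(\sigma(v))$; everything else is a fairly mechanical assembly of results already in place. Once that identity is pinned down, combining it with equations (\ref{ortat:1})--(\ref{ortat:2}) and the block decomposition closes the induction and simultaneously identifies the generating set as the $2m$ orbit products of the variables together with $\xi_0,\ldots,\xi_{n-2}$.
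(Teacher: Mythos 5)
Your proof is correct and follows essentially the same route as the paper: induction on $m$ using the block decomposition $S_m^{P_m}=\bigoplus_{\gamma\in\BH}\bigl(\overline{S}^{P_{m-1}}\otimes\field_q[x_1,N(y_1)]\bigr)\gamma$ together with the graded isomorphism $\varphi=\psi_1\circ\sigma$ and the fact that $\psi_1(\xi_i)\in\field_q[x_1,\xi_0,\ldots,\xi_{i+1}]$. The only differences are cosmetic: you start the induction at $m=1$ instead of $m=2$, and you helpfully supply the orbit-product identity $\varphi(N^{P_{m-1}}(v))=N^{P_m}(\sigma(v))$ via the coset decomposition $P_m=P_{m-1}H$, which the paper asserts without proof.
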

 \begin{proof}
 The proof is by induction on $m$. For $m=2$, we have $P_m=H$ and the result follows from Section \ref{HgroupSec}.

For the induction step, first observe that $S_m^{P_m}$ is generated by $x_m$, $N(y_m)$, elements of $\BH$ and generators for $\overline{S}^{P_{m-1}}$.
 The isomorphism from $S_{m-1}^{P_{m-1}}$ to $\overline{S}^{P_{m-1}}$ takes orbit products to orbit products and $\xi_{i}$ to
 $\psi_{[m,1]}(\xi_{i})\in\field_q[x_m,\xi_{j}\mid j\leq i+1]$.
 Thus $S_m^{P_m}$ is generated by orbit products and the $\xi_{i}$. By induction, $S_{m-1}^{P_{m-1}}$ is a free module
 over the algebra generated by the orbit products of the variables. Let $\mathcal{C}$ denote the image of the module generators in $\overline{S}^{P_{m-1}}$.
 Then $\BH\mathcal{C}$ is the set of module generators for  $S_m^{P_m}$ over the algebra generated by the orbit products of the variables and $S_m^{P_m}$ is Cohen-Macaulay.
 \end{proof}
 
Recall from Section \ref{setting_section} that $\psi_{[m,j]}:S_m\to S_m$ is the algebra map defined by
$$\psi_{[m,j]}(a):=\prod_{u\in U_j}(a-u)$$
for $\deg(a)=1$, where $U_j = \Span_{\field_q}\{x_{m-j+1},\ldots,x_m \}$.

\begin{lem}\label{lt of elements}
In the lexicographic term order
    $\lt(\psi_{[m,j]}(\xi_{i+1})) = y_{m-j}^{q^{i+j}} x_{m-j}^{q^j}$.
\end{lem}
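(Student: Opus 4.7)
The plan is to expand $\xi_i=\sum_{k=1}^m(y_k^{q^i}x_k+y_kx_k^{q^i})$ for $i\ge 1$ (the case $i=0$, where $\xi_0=\sum_k y_kx_k$, is entirely analogous), apply the algebra homomorphism $\psi_j$, and track leading terms. Since $\psi_j$ is an algebra map it commutes with $q$-th powers, so
$$\psi_j(\xi_i)=\sum_{k=1}^m\Bigl(\psi_j(y_k)^{q^i}\psi_j(x_k)+\psi_j(y_k)\psi_j(x_k)^{q^i}\Bigr).$$

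The first step is to show that the summands with $k\le j$ vanish. For such $k$, $x_k\in U_j$, so the factor $(x_k-x_k)=0$ occurs in the product $\psi_j(x_k)=\prod_{u\in U_j}(x_k-u)$; hence $\psi_j(x_k)=0$ and both terms indexed by $k$ are zero. Next, for $k>j$ I compute leading terms. In the lex order $y_1>\cdots>y_m>x_m>\cdots>x_1$, every element of $U_j$ is an $\field_q$-linear combination of $x_1,\ldots,x_j$, which are lex-smaller than $y_k$ for every $k$ and lex-smaller than $x_k$ for $k>j$ (since $x_{j+1}>x_j>\cdots>x_1$). So each factor $y_k-u$ has leading term $y_k$, yielding $\lt(\psi_j(y_k))=y_k^{q^j}$, and similarly $\lt(\psi_j(x_k))=x_k^{q^j}$ for $k>j$. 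Consequently the two summands at index $k>j$ have leading monomials $y_k^{q^{i+j}}x_k^{q^j}$ and $y_k^{q^j}x_k^{q^{i+j}}$ respectively.

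It remains to compare these leading monomials across $k>j$. Smaller $k$ yields a lex-larger $y_k$, so the dominant candidates come from $k=j+1$; between the two at $k=j+1$, the monomial $y_{j+1}^{q^{i+j}}x_{j+1}^{q^j}$ wins because $q^{i+j}>q^j$ for $i\ge 1$, and the comparison is decided on the $y_{j+1}$-exponent. Thus $\lt(\psi_j(\xi_i))=y_{j+1}^{q^{i+j}}x_{j+1}^{q^j}$.

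The only subtlety is guaranteeing that no cancellation of leading monomials occurs between distinct summands. This is automatic: candidates for different $k$ involve different $y$ and $x$ variables, and the two candidates at $k=j+1$ have different $y_{j+1}$-exponents when $i\ge 1$ (they coincide when $i=0$, but then all leading coefficients are $1$ and no cancellation is possible). So the proof reduces to careful bookkeeping of leading terms, with no genuine obstacle.
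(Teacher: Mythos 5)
Your proof is correct and takes essentially the same route as the paper's: you observe that $\psi_j(x_k)=0$ for $k\le j$, that $\lt(\psi_j(y_k^a x_k^b))=y_k^{q^j a}x_k^{q^j b}$ for $k>j$, and then compare the resulting candidate lead monomials. The paper states these same two observations in one sentence each and leaves the bookkeeping to the reader; your write-up just fills in that bookkeeping explicitly.
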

\begin{proof}
    The linear forms $x_m,x_{m-1},\dots,x_{m-j+1}$ lie in the kernel
of $\psi_{[m,j]}$.  Furthermore for $k \leq m-j$ and $a,b$ positive integers $\lt(\psi_{[m,j]}(y_k^a x_k^b )) = y_k^{q^ja} x_k^{q^j b}$.  From these observations the result follows.
\end{proof}

Let $\sigma$ denote the inclusion of $S_{m-1}$ into $S_m$.
Define $\varphi:=\psi_{[m,1]}\circ\sigma$. The restriction of $\varphi$ gives the algebra isomorphism from $S_{m-1}^{P_{m-1}}$ to $\overline{S}^{{P_{m-1}}}\subset S_m$.

\begin{lem} \label{monomorphism_lem} 
$\varphi\circ\psi_{[m-1,j]}=\psi_{[m,j+1]}\circ\sigma$ and 
$\varphi(\psi_{[m-1,j]}(\xi_i))=\psi_{[m,j+1]}(\xi_i)$.
\end{lem}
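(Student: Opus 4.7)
The plan is to verify the first identity as a statement about algebra homomorphisms and then derive the second as a corollary. Both $\varphi \circ \psi_j$ and $\psi_{j+1} \circ \sigma$ are $\field_q$-algebra homomorphisms from $S_{m-1}$ to $S_m$, so it suffices to check equality on a generating set of $S_{m-1}$, say on an arbitrary linear form $v \in S_{m-1}$.

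First I would expand $\psi_j(v) = \prod_{u \in U_j}(v-u)$ as a product of linear forms in $S_{m-1}$, where $U_j = \Span_{\field_q}\{x_1,\ldots,x_j\}$. Since $\sigma$ is an algebra homomorphism, $\sigma(\psi_j(v)) = \prod_{u \in U_j}(\sigma(v)-\sigma(u))$, and $\sigma(U_j) = \Span_{\field_q}\{x_2,\ldots,x_{j+1}\} \subset S_m$. Then, because $\psi_1$ is also an algebra homomorphism and on any linear form $w \in S_m$ satisfies $\psi_1(w) = w^q - w x_1^{q-1} = \prod_{a \in \field_q}(w - a x_1)$, I push $\psi_1$ through the product to obtain
$$
\varphi(\psi_j(v)) \;=\; \prod_{u \in U_j} \prod_{a \in \field_q}\bigl(\sigma(v) - (\sigma(u) + a x_1)\bigr).
$$
The key observation is that the map $(u,a) \mapsto \sigma(u) + a x_1$ is a bijection from $U_j \times \field_q$ onto $U_{j+1}$, because $\sigma(U_j)$ and $\field_q x_1$ intersect trivially in $S_m$ and together span $U_{j+1}$. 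Hence the double product collapses to $\prod_{u' \in U_{j+1}}(\sigma(v) - u') = \psi_{j+1}(\sigma(v))$, establishing $\varphi \circ \psi_j = \psi_{j+1} \circ \sigma$.

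For the second identity, I apply the first with $v = \xi_i \in S_{m-1}$ to get $\varphi(\psi_j(\xi_i)) = \psi_{j+1}(\sigma(\xi_i))$. A direct calculation gives $\sigma(\xi_i) = \xi_i - (y_1^{q^i} x_1 + y_1 x_1^{q^i})$, where the $\xi_i$ on the right-hand side now denotes the element of $S_m$. Since $x_1 \in U_{j+1}$, the product defining $\psi_{j+1}(x_1)$ contains a zero factor, so $\psi_{j+1}(x_1) = 0$; the algebra homomorphism $\psi_{j+1}$ therefore annihilates every multiple of $x_1$, and in particular $\psi_{j+1}(y_1^{q^i} x_1 + y_1 x_1^{q^i}) = 0$. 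This yields $\psi_{j+1}(\sigma(\xi_i)) = \psi_{j+1}(\xi_i)$ and completes the proof.

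The only genuinely non-mechanical step is the reindexing bijection $U_j \times \field_q \leftrightarrow U_{j+1}$; the rest is keeping track of which ring each $\psi_\bullet$ and $\xi_i$ lives in, and using that $\sigma$ and $\psi_1$ (and hence $\varphi$) are algebra homomorphisms.
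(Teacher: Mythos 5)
Your argument is correct and follows essentially the same route as the paper's: both verify the identity on linear forms via the decomposition $U_{j+1}=\field_q x_1\oplus\sigma(U_j)$ (the paper writes this as $U_{j+1}=U_1\oplus\sigma(U_j)$; your explicit reindexing bijection is the same observation spelled out), and both derive the second claim from $x_1\in\ker(\psi_{j+1})$. The only slip is cosmetic: for $i=0$ one has $\sigma(\xi_0)=\xi_0-y_1x_1$, not $\xi_0-2y_1x_1$ as your uniform formula would give, but since the difference is still a multiple of $x_1$ the conclusion $\psi_{j+1}(\sigma(\xi_i))=\psi_{j+1}(\xi_i)$ is unaffected.
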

\begin{proof} By definition, $\varphi=\psi_{[m,1]}\circ \sigma$.
Since $x_m$ is in the kernel of $\psi_{[m,1]}$,
we have $\varphi(\xi_i)=\psi_{[m,1]}(\xi_i)$. 
This proves the result for $j=0$, using the convention that $\psi_{[m-1,0]}$
is the identity map.

Suppose $j>0$. Both $\varphi\circ\psi_{[m-1,j]}$ and 
$\psi_{[m,j+1]}\circ\sigma$ are algebra maps from $S_{m-1}$ to $S_m$
so to prove the first equation it is sufficient to show that 
$\varphi(\psi_{[m-1,j]}(a))=\psi_{[m,j+1]}(\sigma(a))$ when 
$a$ is homogeneous of degree one.
Define 
$$\widetilde{U}_j:=\Span_{\field_q}\{x_{m-j},\ldots,x_{m-1} \}\ .$$
Then
$$\varphi(\psi_{[m-1,j]}(a))
=\psi_{[m,1]}\left(\sigma\left(\prod_{u\in \widetilde{U}_j}(a-u)\right)\right)
=\psi_{[m,1]}\left(\prod_{u\in \widetilde{U}_j}(\sigma(a)-u)\right).$$
Since $U_{j+1}=U_1\oplus \widetilde{U}_j$,
we have 
$$\psi_{[m,1]}\left(\prod_{u\in \widetilde{U}_j}(\sigma(a)-u)\right)
=\prod_{u\in U_{j+1}}(\sigma(a)-u)=\psi_{[m,j+1]}(\sigma(a))$$
as required. For the second equation, since $x_m$ is in the kernel of $\psi_{[m,j+1]}$, we have 
$\psi_{[m,j+1]}(\xi_i)=\psi_{[m,j+1]}(\sigma(\xi_i))
=\varphi(\psi_{[m-1,j]}(\xi_i))$. 
\end{proof}

\begin{example} In this example we consider the Sylow invariants for $\orthp{6}{q}$. It follows from Theorem \ref{sylow_thm} that $S_3^{P_3}$ is generated by
the homogeneous system of parameters $\HH_0:=\{N(y_3), N(y_2),N(y_1), N(x_1), N(x_2),x_3\}$ and $\{\xi_1,\xi_2,\xi_3,\xi_4\}$.
Using the lexicographic order the lead monomials are $$y_3^{q^4},y_2^{q^3},y_1^{q^2}, x_1^{q^2}, x_2^q, x_3, y_3x_3,y_3^qx_3, y_3^{q^2}x_3, y_3^{q^3}x_3.$$
The initial \tat s are $$\xi_1^q-x_3^{q-1}\xi_2, \, \xi_2^q-x_3^{q-1}\xi_3, \,\xi_3^q-x_3^{q-1}\xi_4, \,\xi_4^q-x_3^qN(y_3).$$
Subducting $\xi_1^q-x_3^{q-1}\xi_2$ using Equation~\ref{ortat:1}
gives $\psi_{[3,1]}(\xi_1)$, which has lead monomial $y_2^qx_2^q$. 
Adding $\psi_{[3,1]}(\xi_1)$ to the generating set and
subducting $\xi_2^q-x_3^{q-1}\xi_3$ using Lemma~\ref{ortat:2} gives $\psi_{[3,1]}(\xi_2)$, which has lead monomial $y_2^{q^2}x_2^q$. 
This suggests that a Khovanskii bases for $S_3^{P_3}$ should include $\psi_{[3,1]}(\xi_1)$ and $\psi_{[3,1]}(\xi_2)$.
We know that $S_2^{P_2}$ is generated by $\xi_1$, $\xi_2$ and the orbit products of the variables (Theorem~\ref{orHcal}).
We also know that this set is a Khovanskii basis (Lemma~\ref{orKbasis}). 
Recall that $\overline{S} = \field_q[Y_2,Y_1,X_1,X_2]$.
Using the isomorphism from $S_2^{P_2}$ to $\overline{S}^{P_2}$, we conclude that  
$$\{N(y_2), N(y_1), N(x_1), N(x_2), \psi_{[3,1]}(\xi_1),\psi_{[3,1]}(\xi_2)\}$$ is a Khovanskii basis for the subalgebra $\overline{S}^{P_2}\subset S_3^{P_3}$.
Therefore the \tat s $\psi_{[3,1]}(\xi_1)^q-N(x_2)^{q-1}\psi_{[3,1]}(\xi_2)$
and $\psi_{[3,1]}(\xi_2)^q-N(x_2)^qN(y_2)$ subduct to zero in $\overline{S}^{P_2}$
(these subduction are also subductions in $S_3^{P_3}$) .
Since $\psi_{[3,1]}(\xi_3)-N(x_2)N(y_2)^q$ subducts to zero in $\overline{S}^{P_2}$,
$\psi_{[3,1]}(\xi_3)$ is not needed in the Khovanskii basis and
$\xi_3^q-x_3^{q-1}\xi_4$ subducts to zero in $S_3^{P_3}$. 
To prove that adjoining $\psi_{[3,1]}(\xi_1)$ and $\psi_{[3,1]}(\xi_2)$ to our generating set gives a Khovanskii basis, we need only show that the \tat\ 
$\xi_4^q-x_3^qN(y_3)$ subducts to zero. We know that this \tat\ subducts to zero over the Hook invariants but it is not immediately clear that this subduction can be constructed over the Sylow invariants. It follows from the proof of Theorem \ref{sylow_thm} that $S_3^{P_3}$ is the free module over the algebra generated by $\HH_0$ with module generators
$$\BH\mathcal{C}=\{ \xi_1^{a_1}\xi_2^{a_1}\xi_3^{a_3}\xi_4^{a_4}\psi_{[3,1]}(\xi_1)^{b_1}\psi_{[3,1]}(\xi_2)^{b_2}\mid a_i<q,\, b_i<q\}.$$
Therefore the Hilbert series of $S_3^{P_3}$, and its lead term algebra, is given by 
$$HS(S_3^{P_3},t)=HS(\HH_0,t)  \sum_{\gamma \in \BH\mathcal{C}} t^{\deg(\gamma)}$$
where 
$$HS(\HH_0,t)=HS(\lt(\HH_0),t)=\prod_{i=1}^3\frac{1}{(1-t^{q^{i-1}})(1-t^{q^{5-i}})}\ . $$
The lead monomials of $\BH\mathcal{C}$ are given by
$$\LM(\BH\mathcal{C})=\{ x_3^{a_1+a_2+a_3+a_4}y_3^{a_1+a_2q+a_3q^2+a_4q^3}x_2^{q(b_1+b_2)}y_2^{qb_1+q^2b_2}\mid a_i<q,\, b_i<q\}.$$
Therefore $\LM(\BH\mathcal{C})$ generates a free module of rank $q^6$ over $\lt(\HH_0)$.
The Hilbert series of the vector space span of $\LM(\BH\mathcal{C})$ is 
\begin{align*}
HS(\lt(\BH\mathcal{C},t))&=\Big(\sum_{b_1=0}^{q-1} t^{2b_1 q}\Big) \Big(\sum_{b_2=0}^{q-1} t^{b_2 (q+q^2)}\Big)
\prod_{i=1}^4\Bigg(\sum_{a_i=0}^{q-1} t^{a_i(q^{i-1}+1)}\Bigg)\\
&= \Big(\frac{1-t^{2q^2}}{1- t^{2q}}\Big) \Big(\frac{1-t^{q^2+q^3}}{1-t^{q+q^2}}\Big) 
\prod_{i=1}^4\Bigg(\frac{1-t^{q^i+q}}{1-t^{q^{i-1}+1}}\Bigg)\\
&= \sum_{\gamma \in \BH\mathcal{C}} t^{\deg(\gamma)}\ .
\end{align*}
The module generated by $\LM(\BH\mathcal{C})$ over $\lt(\HH_0)$ is a submodule of 
$\lt(S_3^{P_3})$.  By the above this module has the same Hilbert series as 
$\lt(S_3^{P_3})$, and thus they are equal.  
Therefore $\HH_0 \cup \BH\mathcal{C}$ is a Khovanskii basis for $S_3^{P_3}$. 
Define $$\widetilde{\BH}:=\{\xi_1,\xi_2,\xi_3,\xi_4,\psi_{[3,1]}(\xi_1), \psi_{[3,1]}(\xi_2)\}\ .$$
Since the algebra generated by $\LM(\HH_0\cup\widetilde{\BH})$ coincides with the algebra generated by
$\LM(\HH_0\cup\BH\mathcal{C})$, we see that $\HH_0\cup\widetilde{\BH}$ is also a Khovanskii basis for $S_3^{P_3}$.
As a consequence, $S_3^{P_3}$ is a complete intersection with relations constructed by subducting the
\tat s $\xi_1^{q}-x_3^{q-1}\xi_2$,  $\xi_2^{q}-x_3^{q-1}\xi_3$, $\xi_3^q-x_3^{q-1}\xi_4$, $\xi_4^q-x_3^qN(y_1)$, $\psi_{[3,1]}(\xi_1)^q-N(x_2)^{q-1}\psi_{[3,1]}(\xi_2)$
and $\psi_{[3,1]}(\xi_2)^q-N(x_2)^qN(y_2)$.
The two elements $\psi_{[3,1]}(\xi_{1})$ and $\psi_{[3,1]}(\xi_2)$ are
not required in the generating set $\HH_0 \cup \widetilde{\BH}$.
The ring $S_3^{P_3}$ is also a complete intersection using the smaller generating set
formed by omitting $\psi_{[3,1]}(\xi_{1})$ and $\psi_{[3,1]}(\xi_2)$.
The relations associated to the first two \tat s allow us to express 
$\psi_{[3,1]}(\xi_{1})$ and $\psi_{[3,1]}(\xi_2)$ as elements of  
$\field_q[x_3,\xi_1,\xi_2,\xi_3]$:
$$\psi_{[3,1]}(\xi_{1})=\xi_1^q-\xi_2x_3^{q-1}+\xi_1x_3^{2q-2}$$
and
$$\psi_{[3,1]}(\xi_{2})=\xi_2^q-\xi_3 x_3^{q-1}-\xi_1^q x_3^{q^2-q}+\xi_1 x_3^{q^2+q-2}
-\psi_{[3,1]}(\xi_{1}) x_3^{q^2-q}\ .$$
Substituting these
expressions for $\psi_{[3,1]}(\xi_1)$ and $\psi_{[3,1]}(\xi_2)$ into the subductions of the
\tat s $\xi_3^q-x_3^{q-1}\xi_4$ and $\xi_4^q-x_3^qN(y_1)$
gives relations which rewrite $\xi_3^q$ and $\xi_4^q$.
Substituting the expressions for $\psi_{[3,1]}(\xi_{1})$ and $\psi_{[3,1]}(\xi_2)$ 
into the relations associated to the last two \tat s gives relations which rewrite $\xi_1^{q^2}$ and $\xi_2^{q^2}$
Therefore, using the smaller generating set, we can replace the module generators $\BH\mathcal{C}$ with $\{\xi_1^{e_1}\xi_2^{e_2}\xi_3^{e_3}\xi_4^{e_4}\mid e_1,e_2<q^2, \, e_3,e_4<q\}$. 
\end{example}

Let $\HH_0$ denote the system of parameters given by the orbit products of the variables and
define $\BH$ and $\mathcal{C}$ as in the proof of Theorem \ref{sylow_thm}.

\begin{thm} \label{sylowKB_thm} Using the lexicographic order, $\HH_0\cup\BH\mathcal{C}$ is a Khovanskii basis for $S_m^{P_m}$.
Furthermore $S_m^{P_m}$ is a complete intersection.
\end{thm}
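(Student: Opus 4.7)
The plan is to proceed by induction on $m$, with base case $m=2$ provided by Theorem \ref{orHcal}, since $P_2 = H$. Throughout I will follow the template of the preceding $m=3$ example, which already exhibits every ingredient of the general argument.

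For the induction step, I first assemble the candidate Khovanskii basis using the module decomposition established in the proof of Theorem \ref{sylow_thm}: the subalgebra $\overline{S}^{P_{m-1}} \subset S_m^{P_m}$ is isomorphic to $S_{m-1}^{P_{m-1}}$ via $\varphi = \psi_1 \circ \sigma$, so the inductive Khovanskii basis of $S_{m-1}^{P_{m-1}}$ transports to a Khovanskii basis of $\overline{S}^{P_{m-1}}$. Lemma \ref{monomorphism_lem} shows that $\varphi$ translates the operators $\psi_j$ into $\psi_{j+1}$, and Lemma \ref{lt of elements} provides the closed-form lead monomials $\lt(\psi_j(\xi_i)) = y_{j+1}^{q^{i+j}} x_{j+1}^{q^j}$. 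Combining the transported basis for $\overline{S}^{P_{m-1}}$ with the Hook generators $\xi_0,\dots,\xi_{n-3}, x_1, N(y_1)$ and the remaining orbit products $N(y_2),\dots,N(y_m),N(x_m),\dots,N(x_2)$ recovers precisely $\HH_0 \cup \BH\mathcal{C}$.

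Next I would enumerate the \tat s in two families. The first family comes from the Hook \tat s of Lemma \ref{orKbasis}: in $S_m^H$ the relations $\xi_i^q - x_1^{q-1}\xi_{i+1}$ (for $0 \le i \le n-4$) and $\xi_{n-3}^q - x_1^q N(y_1)$ subduct to zero, but in $S_m^{P_m}$ the first $n-3$ of them subduct only to the new Sylow generators $\psi_1(\xi_i) \in \mathcal{C}$, so the genuine Sylow \tat s are formed by raising to further $q$-th powers until the result becomes a product of elements of the candidate basis. The second family is the $\varphi$-image of the inductive Sylow \tat s for $S_{m-1}^{P_{m-1}}$, whose subductions are automatic by induction and Lemma \ref{monomorphism_lem}; the final Hook \tat\ $\xi_{n-3}^q - x_1^q N(y_1)$ is already a genuine Sylow relation and appears as-is.

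The main technical obstacle is then to verify that $\HH_0 \cup \BH\mathcal{C}$ truly is a Khovanskii basis, i.e.\ that its lead monomials generate $\lt(S_m^{P_m})$. Following the $m=3$ calculation, I would compute $\lt(\BH\mathcal{C})$ term-by-term using Lemma \ref{lt of elements} and check that these lead monomials form a free $\field_q[\lt(\HH_0)]$-module of rank equal to that of $S_m^{P_m}$ over $\field_q[\HH_0]$ as provided by Theorem \ref{sylow_thm}. A Hilbert series comparison then closes the argument: since $S_m^{P_m}$ and $\lt(S_m^{P_m})$ share the same Hilbert series, and the candidate subalgebra of $\lt(S_m^{P_m})$ already realises this series, the inclusion is forced to be an equality. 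Finally, the Complete Intersection conclusion follows because $S_m^{P_m}$ is Cohen-Macaulay by Theorem \ref{sylow_thm}, the enumerated \tat s form a presenting set of relations, and their number matches the codimension of the presentation, producing the block-basis structure highlighted in the introduction.
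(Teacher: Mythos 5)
Your proposal follows essentially the same route as the paper: build the candidate $\HH_0\cup\BH\mathcal{C}$ from the recursive module decomposition of Theorem~\ref{sylow_thm}, use Lemma~\ref{lt of elements} and Lemma~\ref{monomorphism_lem} to compute lead monomials and check that they generate a free $\field_q[\LM(\HH_0)]$-module inside $\lt(S_m^{P_m})$, and then close the gap by comparing Hilbert series. The Complete Intersection conclusion by counting relations is also the paper's argument. One stylistic difference worth noting: rather than chasing the Hook \tat s through successive $q$-th powers as you propose (mirroring the $m=3$ example), the paper first passes to the smaller generating set $\widetilde{\BH}=\{\psi_j(\xi_i)\mid 0\leq i\leq n-3-2j\}$, observing that its lead terms already generate $\lt(\BH\mathcal{C})$, and then reads off the \tat s directly as $\psi_j(\xi_i)^q-\psi_j(\xi_{i+1})N(x_{j+1})^{q-1}$ and $\psi_j(\xi_i)^q-N(y_{j+1})N(x_{j+1})^q$; this makes the relation count $n$ immediate and avoids the iterated power manipulation.
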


\begin{proof} Applying Lemma \ref{monomorphism_lem} inductively shows that the elements of $\BH\mathcal{C}$ are of the form $\prod \psi_{[m,j]}(\xi_{i})^{e_{ij}}$
with $e_{ij}<q$ (recall that $\psi_{[m,0]}$ is the identity map).
Using Lemma~\ref{lt of elements}, careful book-keeping shows that $\LM(\BH\mathcal{C})$
has the same cardinality as $\BH\mathcal{C}$ and that $\LM(\BH\mathcal{C})$ generates a free module over the algebra generated by $\LM(\HH_0)$.
We denote this module by $K$.
It follows from the proof of Theorem~\ref{sylow_thm} that $S_m^{P_m}$ is the free module over
$\HH_0$ generated by $\BH\mathcal{C}$.
Therefore $K$ and $S_m^{P_m}$ have the same Hilbert series.
Since $S_m^{P_m}$ and $\lt(S_m^{P_m})$ have the same Hilbert series,
we have $HS(K,t)=HS(\lt(S_m^{P_m},t))$.
The module $K$ is a subset of $\lt(S_m^{P_m})$. 
Hence $K=\lt(S_m^{P_m})$ and $\HH_0 \cup \BH\mathcal{C}$ is a Khovanskii basis for $S_m^{P_m}$. 
Let $\widetilde{\BH}$ denote the subset of $\BH\mathcal{C}$ 
consisting of elements of the form $\psi_{[m,j]}(\xi_{i})$. Since the algebra generated by $\LM(\HH_0\cup\widetilde{\BH})$ coincides with the algebra generated by
$\LM(\HH_0\cup\BH\mathcal{C})$, we see that $\HH_0\cup\widetilde{\BH}$ is also a Khovanskii basis for $S_m^{P_m}$.

Note that $\widetilde\BH=\{\psi_{[m,j]}(\xi_{i})\mid 0\leq j\leq m-2 \text{ and } 1\leq i\leq n-2-2j\}$.
The non-trivial \tat s are 
$\psi_{[m,j]}(\xi_{i})^q-\psi_{[m,j]}(\xi_{i+1})N(x_{j+1})^{q-1}$ for $i<n-2-2j$
and $\psi_{[m,j]}(\xi_{i})^q-N(y_{j+1})N(x_{j+1})^q$ for $i=n-2-2j$.
Since  $\HH_0\cup\widetilde{\BH}$ is a Khovanskii basis, each of these \tat s subducts to zero.
The resulting relations are independent and generate the ideal of relations.
The number of generators minus the number of independent relations is $n$, proving that $S_m^{P_m}$ is a complete intersection.
\end{proof}

\begin{remark} It follows from the proof of Theorem \ref{sylowKB_thm} that
$\BH\mathcal{C}$ consists of the monomial factors of
$\prod\{\psi_{[m,j]}(\xi_{i})^{q-1}\mid  0\leq j\leq m-2 \text{ and } 1\leq i\leq n-2-2j\}$.
\end{remark}

\begin{cor}\label{Sylow_basis}
    The monomial factors of 
$$\Gamma_0=\prod_{i=1}^{n-2} \xi_{i}^{e_i} \text{ where }
e_i = q^{m-\lceil i/2\rceil}-1\ .$$ 
 form a basis for 
    $S_m^{P_m}$ as a free module over $\field_q[\HH_0]$.
\end{cor}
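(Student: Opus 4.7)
The plan is to reduce to the quotient $S_m^{P_m}/I$, where $I=\langle \HH_0\rangle S_m^{P_m}$, and then invoke graded Nakayama. By Theorem~\ref{sylow_thm}, $S_m^{P_m}$ is Cohen--Macaulay with $\HH_0$ a homogeneous system of parameters, so it is a free $\field_q[\HH_0]$-module of rank $\dim_{\field_q}(S_m^{P_m}/I)$. By Theorem~\ref{sylowKB_thm} this rank equals $q^{m(m-1)}$, and a direct computation $\sum_{i=0}^{2m-3}(m-1-\lfloor i/2\rfloor) = m(m-1)$ shows that the number of monomial factors of $\Gamma_0$ is also $q^{m(m-1)}$. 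Thus it suffices to exhibit these factors as a spanning set for $S_m^{P_m}/I$ over $\field_q$.

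The heart of the argument is the congruence $\psi_j(\xi_i)\equiv \xi_i^{q^j}\pmod{I}$ for every $(i,j)$ with $\psi_j(\xi_i)\in\widetilde{\BH}$. For $j=1$ this is immediate from Equations~(\ref{ortat:1}) and~(\ref{ortat:2}), since every correction term carries a positive power of $x_1\in\HH_0$. For $j\geq 2$ I would argue by induction on $m$ using Lemma~\ref{monomorphism_lem}: writing $\psi_{j+1}(\xi_i)=\varphi(\psi_j(\xi_i))$ with $\varphi=\psi_1\circ\sigma$, the induction hypothesis applied in $S_{m-1}^{P_{m-1}}$ gives $\psi_j(\xi_i)\equiv \xi_i^{q^j}$ modulo the analogous ideal, and applying $\varphi$ combined with the $j=1$ identification and the Frobenius-additivity of $q$-th powers yields the congruence for $\psi_{j+1}(\xi_i)$.

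Granting this congruence, the images in $S_m^{P_m}/I$ of the basis elements $\prod_{i,j}\psi_j(\xi_i)^{b_{i,j}}\in\BH\mathcal{C}$, with $0\leq b_{i,j}<q$, become $\prod_i\xi_i^{c_i}$ where $c_i=\sum_j q^j b_{i,j}$. Because $\sum_{j=0}^{J_i}(q-1)q^j = q^{J_i+1}-1 = e_i$, with $J_i$ the maximal index $j$ for which $\psi_j(\xi_i)\in\widetilde{\BH}$, the exponent $c_i$ runs over exactly $\{0,1,\ldots,e_i\}$ by uniqueness of base-$q$ expansion. Hence the images of the monomial factors of $\Gamma_0$ coincide with the images of $\BH\mathcal{C}$ and form a $\field_q$-basis of $S_m^{P_m}/I$, and by graded Nakayama they lift to a $\field_q[\HH_0]$-basis of $S_m^{P_m}$. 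The main obstacle is verifying the inductive step for $j\geq 2$: one must check that $\varphi$ maps the ideal $\langle\HH_0^{(m-1)}\rangle\subset S_{m-1}^{P_{m-1}}$ into $I$, which reduces to showing that the $\varphi$-image of each orbit-product generator of $\HH_0^{(m-1)}$ lies in $I$.
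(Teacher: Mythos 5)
Your proof is correct, but it takes a genuinely different route from the paper. The paper's argument proceeds directly via the Khovanskii machinery of Theorem~\ref{sylowKB_thm}: it lists the specific \tat s of the form $\xi_i^{q^{a_i}}$ minus a lead-matching monomial carrying a factor of $x_1$ or $N(y_1)$, notes that these subduct to zero (so $\xi_i^{e_i+1}$ is reducible modulo $\langle\HH_0\rangle$), and then compares the count of monomial factors of $\Gamma_0$ against the rank of $S_m^{P_m}$ over $\field_q[\HH_0]$. You instead take the module basis $\BH\mathcal{C}$ already provided by Theorem~\ref{sylow_thm}, establish the congruence $\psi_j(\xi_i)\equiv \xi_i^{q^j}\pmod{\langle\HH_0\rangle}$, and use uniqueness of base-$q$ expansion to conclude that $\BH\mathcal{C}$ and the monomial factors of $\Gamma_0$ have identical images in $S_m^{P_m}/\langle\HH_0\rangle$, then invoke graded Nakayama. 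Your change-of-basis argument is more explicit and makes the source of the exponents $e_i=q^{m-1-\lfloor i/2\rfloor}-1$ transparent (they arise as $\sum_{j=0}^{J_i}(q-1)q^j$ with $J_i$ the largest $j$ with $\psi_j(\xi_i)\in\widetilde{\BH}$), whereas the paper's argument is shorter but leaves the spanning step implicit in the subduction calculus.

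The one step you flag but do not carry out, namely that $\varphi$ sends $\langle \HH_0^{(m-1)}\rangle$ into $\langle\HH_0\rangle$, does hold and is essentially already in the paper: the proof of Theorem~\ref{sylow_thm} records that the isomorphism $S_{m-1}^{P_{m-1}}\to \overline{S}^{P_{m-1}}$ takes orbit products to orbit products, and since $H$ is normal in $P_m=P_{m-1}H$ with $N^H(y_{j+1})=Y_{j+1}$ for $j\ge 1$, one has $\varphi\bigl(N^{P_{m-1}}(y_j)\bigr)=N^{P_{m-1}}(Y_{j+1})=N^{P_m}(y_{j+1})\in\HH_0$ (and similarly for the $x$'s). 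Inserting a sentence to that effect would close the gap and make your proof complete and self-contained.
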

\begin{proof}
  Using Theorem~\ref{sylowKB_thm} and its proof,
  each of the \tat s  $\xi_{n-2}^q - x_m^q N(y_m)$, 
  $\xi_{n-3}^q - x_m^{q-1}\xi_{n-2}$, 
  $\xi_{n-4}^{q^2} - x_m^{q(q-1)}\xi_{n-3}^q$,
  $\xi_{n-5}^{q^2} - x_m^{q(q-1)}\xi_{n-4}^q, \dots,
  \xi_2^{q^{m-1}}-x_m^{q^{m-2}(q-1)}\xi_3^{q^{m-2}}$, and
  $\xi_1^{q^{m-1}}-x_m^{q^{m-2}(q-1)}\xi_2^{q^{m-2}}$
  subducts to zero.   Applying Theorem~\ref{sylow_thm}
  and comparing the number of monomial factors of
  $\Gamma_0$ with the number of module generators for $S_m^{P_m}$
  over $\field_q[\HH_0]$ yields the result.
\end{proof}

\begin{remark}
    The rank of $S_m^{P_m}$ as a free $\field_q[\HH_0]$-module is $q^{m(m-1)}$ which surprisingly
is also $|P_m|$.
\end{remark}

\begin{cor}\label{Sylow minimality}
    $S_m^{P_m}$ is minimally generated by 
    $\HH_0 \cup \{\xi_1,\xi_2,\dots,\xi_{n-2}\}$.
\end{cor}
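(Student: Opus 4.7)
The plan has two parts. For generation, I would invoke the preceding corollary: $S_m^{P_m}$ is the free $\field_q[\HH_0]$-module whose basis is the set of monomial factors of $\Gamma_0=\prod_{i=0}^{n-3}\xi_i^{e_i}$. Since every such basis element is a monomial in $\xi_0,\dots,\xi_{n-3}$, any invariant is a polynomial in $\HH_0\cup\{\xi_0,\dots,\xi_{n-3}\}$.

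For minimality, I would use the graded Nakayama criterion and show that no listed generator lies in $((S_m^{P_m})_+)^2$. The lexicographic leading monomials are easy to compute: $\lt(x_1)=x_1$, $\lt(N(x_j))=x_j^{q^{j-1}}$, $\lt(N(y_j))=y_j^{q^{n-j-1}}$, and (by Lemma~\ref{lex_lt_lem}) $\lt(\xi_i)=y_1^{q^i}x_1$. The degrees of the listed generators are distinct except for the coincidence $\deg N(x_m)=\deg N(y_m)=q^{m-1}$, and these two have distinct lead monomials, so linear independence in the cotangent space reduces to showing, for each generator $f$ individually, that $\lt(f)$ is not obtainable as a product of monomials from generators of strictly smaller degree.

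The easy cases are $x_1$ (no smaller generator exists); the orbit products $N(x_j),N(y_j)$ with $j\geq 2$, whose pure-power lead involves a variable $x_j$ or $y_j$ that appears in no monomial of any strictly-smaller-degree generator (verified by inspecting the orbit product formulas); and $N(y_1)$, whose lead $y_1^{q^{n-2}}$ is $x_1$-free, while any smaller-degree lead monomial involving $y_1$---necessarily some $\lt(\xi_i)$---carries an $x_1$ as well, so no product of them is pure in $y_1$.

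The main obstacle is the $\xi_i$ case, where $\lt(\xi_i)=y_1^{q^i}x_1$ shares both variables with several smaller generators. Since $N(y_1)$ has larger degree than every $\xi_i$, any product of strictly-smaller-degree lead monomials restricted to involve only $y_1$ and $x_1$ takes the form $y_1^{\sum_{j<i}b_jq^j}x_1^{c+\sum_{j<i}b_j}$ for nonnegative integers $b_j,c$. Matching to $y_1^{q^i}x_1$ forces $\sum_{j<i} b_jq^j=q^i$ and $c+\sum_{j<i} b_j=1$; nonnegativity leaves only $c=0$ with a unique $b_j=1$ satisfying $q^j=q^i$, forcing $j=i$, a contradiction.
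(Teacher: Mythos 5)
Your generation argument is correct and matches the paper's intent (it follows from the preceding corollary that $S_m^{P_m}=\bigoplus_{\gamma\mid\Gamma_0}\field_q[\HH_0]\gamma$, and every $\gamma$ is a monomial in $\xi_0,\dots,\xi_{n-3}$).

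For minimality, however, there is a genuine gap in the Nakayama step. You reduce to showing, for each generator $f$, that $\lt(f)$ is not a product of lead monomials of strictly-smaller-degree generators, and you carry out that computation correctly. But this does \emph{not} by itself show $f\notin((S_m^{P_m})_+)^2$. An element $h=\sum_j g_jh_j\in((S_m^{P_m})_+)^2$ of degree $\deg f$ can have $\lt(h)=\lt(f)$ even when $\lt(f)$ is not a product of lead monomials of the $g_j,h_j$: this happens exactly when the largest lead monomials among the products $g_jh_j$ cancel, i.e.\ when the expression involves a \tat. Ruling out that possibility is precisely the extra step the paper takes for the $\xi_i$'s: after observing that $y_1^{q^i}x_1$ is not a product of lead monomials of other generators, it argues that a \tat\ producing that lead term would have to be in degree $q^i+1$ and involve a generator whose lead term exceeds $y_1^{q^i}x_1$, and every such generator already has degree $>q^i+1$. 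Without an analogue of this cancellation analysis, your conclusion ``hence $f\notin((S_m^{P_m})_+)^2$'' does not follow.

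It is also worth noting that the paper's route differs in structure: it handles the orbit-product generators and $\xi_0,\dots,\xi_{m-2}$ via the Ferreira--Fleischmann computation of $\F(S_m^{P_m})$ (algebraic independence) together with a variety-dimension argument, and reserves the lead-term/\tat\ argument for $\xi_{m-1},\dots,\xi_{n-3}$ only. Your proposal is more uniform, applying the lead-monomial argument to every generator, which is an attractive simplification---but it requires addressing the cancellation issue uniformly as well. For the orbit products your ``pure power of a new variable'' observation does make that repair easy (no \tat\ of the right degree can have a lead term divisible only by $x_j$ or $y_j$), so the fix is localized to the $\xi_i$ case and amounts to inserting the paper's degree-comparison for \tat s.
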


\begin{proof}
  In \cite{Ferreira+Fleischmann-fields:16} it is shown that 
$$\F(S_m^{P_m}) = \field_q(N(y_1),N(x_1),N(x_2),\dots,N(x_m),\xi_1,\xi_2,\dots,\xi_{m-1})$$
and hence this set of invariants is algebraically independent.
Clearly 
$$\V(N(y_1),N(x_1),N(x_2),\dots,N(x_m),\xi_1,\xi_2,\dots,\xi_{m-1}) = 
\Span_{\overline{\field}_q}\{e_2,e_2,\dots,e_{m}\}.$$
The remaining $\xi_{i}$ all vanish on this set and so to cut this variety down to a point
(the origin) we need to add in the remaining $m-1$ norms.  It remains to show that
$\xi_{m},\xi_{m+1},\dots,\xi_{n-2}$ are also necessary as generators.

  Otherwise, one of these $\xi_{i}$ can be expressed as a polynomial $f$ in the other $2n-3$
generators.  Using the lex order, since $\lt(\xi_{i})= y_m^{q^{i-1}}x_m$, this must also be the 
lead term of $f$ as 
an element of $S_m$.  This lead term cannot be expressed as a product of the lead terms
of the other $2m-3$ generators and thus it must arise from a \tat.  This \tat\ must
be in degree $q^{i-1}+1$ and involve at least one generator whose lead term is greater than $y_m^{q^{i-1}}x_m$.  But any of these
$2m-3$ generators whose lead term is greater than $y_m^{q^{i-1}}x_m$ also has degree
greater than $q^{i-1}+1$.  Hence no such \tat\ can exist.   
\end{proof}

\subsection{Computing the Borel Invariants} \label{Borel_inv}

Let $B$ denote the Borel subgroup of upper triangular matrices in $\orthp{2m}{q}$. 
The Sylow subgroup $P$ is a normal subgroup of $B$ and we identify $B/P$ with the subgroup of diagonal matrices, which we denote by $T$.
Let $N$ denote the normaliser of $T$ in $\orthp{2m}{q}$.
The Weyl group, $W:=N/T$, is isomorphic to the wreath product $C_2\wr \Sigma_m$. We can identify $W$ with a subgroup of
$\orthp{2m}{q}$: the group $\Sigma_m$ acts by permuting the hyperbolic pairs
and the additional generator, say $\omega$, interchanges $y_1$ and $x_1$
while fixing the other variables.
Note that $B$ and $N$ are not a $BN$-pair for $\orthp{2m}{q}$.
Most of the axioms are satisfied and $\orthp{2m}{q}=BNB$
but $\omega B\omega=B$ (see \cite[p. 85]{taylor-classicalgroups:92}
and \cite[\S1.7.5]{geck-alggrps:03}).
The subgroup $\sorthp{2m}{q}$ contains $B$ and has a $BN$-pair
with Weyl $W'$, an index $2$ subgroup of $W$. The Dynkin type of $W'$ is
$D_m$ (see \cite[\S1.7.9, \S1.3.16]{geck-alggrps:03}).

Let $\HH_0$ denote the homogeneous system of parameters for $S^P$ given by the orbit products over $P$ of the variables 
and let $\field_q[\HH_0]$ denote the algebra generated by $\HH_0$. Define $\BH$ and $\mathcal{C}$ as in the proof of Theorem \ref{sylow_thm}.
Then
$$S^P=\bigoplus_{\gamma\in\BH\mathcal{C}} \field_q[\HH_0]\gamma.$$

\begin{lem} \label{borel_lem} Suppose $w\in S$ with $w$ homogeneous of degree one  and $b\in B$ with $w\cdot b=c w$ for some scalar $c$.
If the $P$-orbit product $N(w)$ has degree $q^\ell$ then $N(w)\cdot b=c^{q^\ell}N(w)$.
\end{lem}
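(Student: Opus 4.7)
My plan is to prove this by a direct orbit-product manipulation, exploiting the normality of $P$ in $B$. Write $N(w) = \prod_{p \in \T} w \cdot p$ where $\T \subset P$ is a set of coset representatives for the point-wise stabiliser $\mathrm{Stab}_P(w)$ in $P$, so $|\T| = q^\ell$ (the orbit size equals the degree of $N(w)$ because $P$ is a $p$-group acting in characteristic $p$ and hence can scale $w$ only by $1$).

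The key observation is that for any $b \in B$ and $p \in P$ we have $pb = b \cdot (b^{-1}pb)$ with $b^{-1}pb \in P$ by normality of $P$ in $B$. So
\[
(w\cdot p)\cdot b \;=\; (w\cdot b)\cdot(b^{-1}pb) \;=\; c\,w\cdot(b^{-1}pb).
\]
Multiplying over $p \in \T$ gives
\[
N(w)\cdot b \;=\; c^{q^\ell}\prod_{p\in\T} w\cdot(b^{-1}pb).
\]
It therefore suffices to show that as $p$ runs through $\T$, the conjugates $b^{-1}pb$ also run through a full set of coset representatives of $\mathrm{Stab}_P(w)$ in $P$, so that the remaining product reassembles to $N(w)$ (the orbit product is independent of the choice of representatives).

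For that, I would verify that conjugation by $b$ preserves $\mathrm{Stab}_P(w)$: if $w\cdot p = w$, then using $w\cdot b^{-1} = c^{-1}w$,
\[
w\cdot(b^{-1}pb) \;=\; (c^{-1}w)\cdot p\cdot b \;=\; c^{-1}(w\cdot b) \;=\; w,
\]
so $b^{-1}pb \in \mathrm{Stab}_P(w)$. Since conjugation by $b$ is a bijection $P \to P$ preserving the subgroup $\mathrm{Stab}_P(w)$, it induces a bijection on the coset space $P/\mathrm{Stab}_P(w)$, and hence sends $\T$ to another transversal. The conclusion $N(w)\cdot b = c^{q^\ell}N(w)$ follows.

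The whole argument is essentially formal once normality is in hand, so there is no real obstacle; the only delicate point is confirming that $\mathrm{Stab}_P(w)$ is preserved under conjugation by $b$, which is where the semi-invariance hypothesis $w\cdot b = cw$ is used in an essential way.
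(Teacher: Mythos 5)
Your proof is correct and uses the same key idea as the paper: normality of $P$ in $B$ lets you rewrite $pb=b(b^{-1}pb)$ and then re-index the orbit product. The paper phrases the product as running over the orbit set $\{wg\mid g\in P\}$ (which implicitly absorbs the stabiliser bookkeeping), whereas you work with a transversal $\T$ of $\mathrm{Stab}_P(w)$ and explicitly verify that conjugation by $b$ preserves the stabiliser — a slightly more careful rendering of the same argument.
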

\begin{proof} Since $P$ is normal in $B$, for every $g\in P$, we have $\overline{g}:=b^{-1}gb\in P$.
Thus 
\begin{align*}
N(w)b&=\left(\prod\{wg\mid g \in P\}\right)b=\prod\{wgb\mid g\in P\}=\prod\{wb\overline{g}\mid g\in P\}\\
             &=\prod\{cw\overline{g}\mid g\in P\}=c^{q^\ell}\prod\{wg\mid g\in P\}=c^{q^\ell}N(w)
\end{align*}   
as required.          
\end{proof}

Since the elements of $\HH_0$ are orbit products over $P$, it follows from Lemma \ref{borel_lem} that $B/P$ acts on $\field_q[\HH_0]$.
Arguing as in the proof of Theorem \ref{sylowKB_thm}, the elements of $\BH\mathcal{C}$ are monomials in  $\psi_{[m,j]}(\xi_{i})$. 
Since $\psi_{[m,j]}(\xi_{i})$ is Borel invariant, we have 
$$S^B=(S^P)^{B/P}=\bigoplus_{\gamma\in\BH\mathcal{C}} \field_q[\HH_0]^{B/P}\gamma.$$
Therefore $S^B$ is a free module over $\field_q[\HH_0]^{B/P}$ and to compute $S^B$, we need only compute $\field_q[\HH_0]^{B/P}$.

\begin{thm} \label{Bor_inv} $S^B$ is a complete intersection. 
Furthermore $\widetilde{\HH_0}\cup \BH\mathcal{C}$ is a Khovanskii basis for $S^B$, using the lexicographic order, where 
$$\widetilde{\HH_0}:=\{N(y_i)^{q-1},N(x_i)^{q-1}, N(y_i)N(x_i)\mid i=1,\ldots m\} .$$
\end{thm}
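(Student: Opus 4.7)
The plan is to build $S^B$ out of the Sylow computation and an independent computation of $\field_q[\HH_0]^{B/P}$. Using the decomposition $S^B=\bigoplus_{\gamma\in\BH\mathcal{C}}\field_q[\HH_0]^{B/P}\gamma$ recorded above, it is enough to identify the $(B/P)$-invariants inside $\field_q[\HH_0]$ and then splice in the Sylow presentation.

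First I would identify $B/P$ with the diagonal maximal torus of $\orthp{2m}{q}$, whose elements are $b=\mathrm{diag}(\lambda_1,\ldots,\lambda_m,\lambda_m^{-1},\ldots,\lambda_1^{-1})$ with $\lambda_i\in\field_q^*$, so $B/P\cong(\field_q^*)^m$ and $y_i\cdot b=\lambda_i y_i$, $x_i\cdot b=\lambda_i^{-1}x_i$. Since $\lambda^{q^k}=\lambda$ for $\lambda\in\field_q^*$, Lemma~\ref{borel_lem} gives $N(y_i)\cdot b=\lambda_i N(y_i)$ and $N(x_i)\cdot b=\lambda_i^{-1}N(x_i)$. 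A monomial $\prod N(y_i)^{a_i}N(x_i)^{c_i}$ is therefore $(B/P)$-invariant precisely when $a_i\equiv c_i\pmod{q-1}$ for every $i$, so $\field_q[\HH_0]^{B/P}$ is generated by $\widetilde{\HH_0}$ with defining ideal cut out by the $m$ relations $(N(y_i)N(x_i))^{q-1}=N(y_i)^{q-1}N(x_i)^{q-1}$; in particular it is itself a complete intersection.

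Next I would fold in the Sylow \tat s from the proof of Theorem~\ref{sylowKB_thm}, namely $\psi_j(\xi_i)^q-\psi_j(\xi_{i+1})N(x_{j+1})^{q-1}$ for $i<n-3-2j$ and $\psi_j(\xi_{n-3-2j})^q-N(y_{j+1})N(x_{j+1})^q$. The critical observation is that each is already expressed using elements of $\widetilde{\HH_0}\cup\BH\mathcal{C}$: the first uses $N(x_{j+1})^{q-1}\in\widetilde{\HH_0}$ directly, and the second rewrites as $\bigl(N(y_{j+1})N(x_{j+1})\bigr)\cdot N(x_{j+1})^{q-1}$. Because both sides of every \tat\ are $B$-invariant, uniqueness of the expansion in the free $\field_q[\HH_0]^{B/P}$-module decomposition forces each subduction to stay within $\widetilde{\HH_0}\cup\BH\mathcal{C}$. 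Counting gives $3m+m(m-1)=m^2+2m$ generators and $m+m(m-1)=m^2$ relations, hence codimension $2m=n$ in the ambient polynomial ring; since the quotient has the right Krull dimension, the relations form a regular sequence and $S^B$ is a complete intersection.

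For the Khovanskii basis statement I would compare Hilbert series. The lead terms $\lt(N(y_i)^{q-1})=y_i^{(q-1)q^{n-i-1}}$, $\lt(N(x_i)^{q-1})=x_i^{(q-1)q^{i-1}}$ and $\lt(N(y_i)N(x_i))=y_i^{q^{n-i-1}}x_i^{q^{i-1}}$ satisfy $\lt(N(y_i)N(x_i))^{q-1}=\lt(N(y_i)^{q-1})\lt(N(x_i)^{q-1})$, and the lead-term analogues of the Sylow \tat s are the ones already established in Theorem~\ref{sylowKB_thm}. Hence the subalgebra of $\lt(S^B)$ generated by $\lt(\widetilde{\HH_0}\cup\BH\mathcal{C})$ admits a presentation matching that of $S^B$ and therefore has the same Hilbert series; since $S^B$ and $\lt(S^B)$ themselves share a Hilbert series, the two subalgebras must coincide, establishing the Khovanskii basis property. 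The main obstacle I anticipate is the bookkeeping needed to confirm that the Sylow \tat s really can be subducted entirely within $\widetilde{\HH_0}\cup\BH\mathcal{C}$ and that no additional \tat s arise among elements of $\widetilde{\HH_0}$ beyond the $m$ torus relations; both follow from the free module decomposition together with $B$-invariance, but warrant careful verification.
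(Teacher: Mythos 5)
Your proposal is correct and follows essentially the same route as the paper's proof: both identify $B/P$ with the diagonal torus, use Lemma~\ref{borel_lem} to read off the weights of the $P$-orbit products, conclude that $\field_q[\HH_0]^{B/P}$ is a complete intersection generated by $\widetilde{\HH_0}$ with the $m$ relations $(N(y_i)N(x_i))^{q-1}=N(y_i)^{q-1}N(x_i)^{q-1}$, and then observe that the Sylow \tat s introduce nothing new. You fill in more detail than the paper — the explicit torus coordinates, the congruence characterisation of invariant monomials, the lead-term check $\lt(N(y_i)N(x_i))^{q-1}=\lt(N(y_i)^{q-1})\lt(N(x_i)^{q-1})$, a generator/relation count, and a Hilbert-series comparison — where the paper simply asserts that $\widetilde{\HH_0}$ is a Khovanskii basis for $\field_q[\HH_0]^{B/P}$ and that ``there are no new \tat s involving $\BH\mathcal{C}$.'' One small caution on your counting step: $\BH\mathcal{C}$ has $q^{m(m-1)}$ elements (it is a module basis), not $m(m-1)$; the quantity $m(m-1)$ you use is the size of $\widetilde{\BH}=\{\psi_j(\xi_i)\mid 0\leq i\leq n-3-2j\}$, which is the minimal algebra-generating subset of $\BH\mathcal{C}$ identified in the proof of Theorem~\ref{sylowKB_thm}. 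Substituting $\widetilde{\BH}$ for $\BH\mathcal{C}$ in your counting argument makes it exactly right; the Khovanskii basis claim itself is unaffected since adjoining the remaining elements of $\BH\mathcal{C}$ adds no new lead monomials.
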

\begin{proof} The proof follows from the computation of $\field_q[\HH_0]^{B/P}$.
From Lemma \ref{borel_lem},
$N(y_i)^{q-1}$ and $N(x_i)^{q-1}$ are Borel invariant. 
Furthermore, $B$ contains the linear transformation which scales $y_i$ by $c\in\field_q$, takes $x_i$ to $c^{-1}x_i$, and fixes the other variables.
Therefore no smaller power of $N(y_i)$ or $N(x_i)$ is Borel invariant. Any element of $B/P$ which scales $y_i$ by $c$ must scale $x_i$ by $c^{-1}$.
Therefore $N(y_i)N(x_i)$ is Borel invariant. It follows from this that  $\field_q[\HH_0]^{B/P}$ is the complete intersection generated by
$\widetilde{\HH_0}$ subject to the relations 
$(N(y_i)N(x_i))^{q-1}=N(y_i)^{q-1}N(x_i)^{q-1}$ and $\widetilde{\HH_0}$ and is a Khovanskii basis for $\field_q[\HH_0]^{B/P}$. 
Let $\widetilde{K}$ denote the algebra generated by the lead monomials of $\widetilde{\HH_0}$.
Since $\widetilde{\HH_0}$ is a Khovanskii basis for $\field_q[\HH_0]^{B/P}$,
the Hilbert series of $\field_q[\HH_0]^{B/P}$ equals the Hilbert series of $\widetilde{K}$.
Furthermore
$$HS(S^B,t)=HS(\field_q[\HH_0]^{B/P},t)
\left(\sum_{\gamma\in\BH\mathcal{C}} t^{\deg(\gamma)} \right)\ .$$
Therefore
$$HS(\lt(S^B),t)=HS(S^B,t)=HS(\widetilde{K},t)\left(\sum_{\gamma\in\BH\mathcal{C}} t^{\deg(\gamma)} \right)$$
 which proves that  $\widetilde{\HH_0}\cup \BH\mathcal{C}$ is a Khovanskii basis for $S^B$.
This means that the non-trivial \tat s from $\BH\mathcal{C}$ subduct to zero in $S^B$.
These subductions, along with the relations coming from $\widetilde{\HH_0}$, give a generating set for the ideal of relations, proving that $S^B$ is a complete intersection.
\end{proof}

Let $\R$ denote the Reynolds operator from $S^B$ to $S^G$.
Since the order of $B$ is $q^{m(m-1)}(q-1)^m$, the index
$$[G:B]=
2\frac{q^m-1}{q-1}\prod_{j=1}^{m-1}\frac{q^{2j}-1}{q-1}
\equiv_{(p)} 2$$
and $$\R(f)=\frac{1}{2}\tr_B^G(f)
=\frac{1}{2}\sum_{Bg\in B\backslash G} fBg$$
To compute the Reynolds operator we need to choose left coset representatives for $B$ in $\orthp{2m}{q}$.
We use the double coset partition to do this. 
The double cosets, $BwB$ for $w\in W$, partition $\orthp{2m}{q}$ so we can choose coset representatives of the form $wb$ for $w\in W$ and $b\in B$. Observe that the number of left cosets in the double coset
$BwB$ is given by the index 
$[B:B\cap w^{-1}Bw]=[P:P\cap w^{-1}Pw]$,
which divides $q^{m(m-1)}$.

\begin{thm} \label{ren_thm} Using the lexicographic order on $S_m$, 
$$\lt\left(\R(N(y_m)^{q-1})\right)=y_m^{(q-1)q^{n-2}}.$$
\end{thm}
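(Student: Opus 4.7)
The plan is to show that $y_1^{(q-1)q^{n-2}}$ appears with nonzero coefficient in $\R(N(y_1)^{q-1})$. Since $\R$ preserves total degree, every term of $\R(N(y_1)^{q-1})$ has total degree $(q-1)q^{n-2}$, and in the lexicographic order (with $y_1$ the largest variable) the unique monomial of this degree with maximal $y_1$-exponent is $y_1^{(q-1)q^{n-2}}$; so showing this coefficient is nonzero pins down the lead term.

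For each $g\in G_m$, the polynomial $N(y_1)^{q-1}\cdot g$ is a product of $(q-1)q^{n-2}$ linear forms, and for any such product its coefficient on $y_1^{(q-1)q^{n-2}}$ equals its value at $e_1$, namely $N(y_1)(g\cdot e_1)^{q-1}$. Using the Bruhat decomposition and left-coset representatives of the form $wb$ in $B\backslash BwB$ (with $w\in W$ and $b\in B$), the upper-triangular $b$ satisfies $b\cdot e_1=b_{11}e_1$ with $b_{11}\in\field_q^\times$, so $b_{11}^{(q-1)q^{n-2}}=1$ and $N(y_1)(wb\cdot e_1)^{q-1}=N(y_1)(we_1)^{q-1}$ is independent of $b$. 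Therefore
\begin{equation*}
[y_1^{(q-1)q^{n-2}}]\,\R(N(y_1)^{q-1})=\frac{1}{2}\sum_{w\in W}[B:B\cap w^{-1}Bw]\cdot N(y_1)(we_1)^{q-1}.
\end{equation*}

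The $w=e$ term contributes $\tfrac12\cdot 1\cdot N(y_1)(e_1)^{q-1}=\tfrac12$, since every factor $u$ of $N(y_1)$ lies in the $P$-orbit of $y_1$ and so has the form $y_1+(\text{lower})$, giving $u(e_1)=1$. For $w\neq e$ the index equals $[P:P\cap w^{-1}Pw]$ (the torus $T\subset B$ is normalized by $W$, so $T\subseteq B\cap w^{-1}Bw$); since $W\cap B=\{e\}$ (a permutation matrix is upper triangular only if it is the identity) and $N_{G_m}(P)=B$, we have $w^{-1}Pw\neq P$, making $P\cap w^{-1}Pw$ a proper subgroup of the $p$-group $P$. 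The index is then a positive power of $q$, vanishing in $\field_q$, so every $w\neq e$ contribution is zero and the coefficient equals $\tfrac12\neq 0$. The main obstacle is the structural input $N_{G_m}(P)=B$, which is standard (the Borel is the normalizer of its unipotent radical in a finite group of Lie type) and can also be checked directly from the explicit description of $B$ and $P$ as upper triangular (respectively, upper-triangular unipotent) matrices inside $\orthp{2m}{q}$.
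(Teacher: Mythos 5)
Your approach is essentially the paper's (Bruhat decomposition, cosets of the form $wb$, the observation that most index factors vanish mod $p$), and the formula
$$[y_1^{(q-1)q^{n-2}}]\,\R(N(y_1)^{q-1})=\tfrac12\sum_{w\in W}[B:B\cap w^{-1}Bw]\,N(y_1)(we_1)^{q-1}$$
is correct. But the structural input you lean on, $N_{G_m}(P)=B$, is \emph{false} for $G_m=\orthp{2m}{q}$, and the error is not incidental: it is exactly the point where $\orthp{2m}{q}$ differs from an honest finite group of Lie type. The group of Lie type of type $D_m$ is $\mathrm{SO}^+_{2m}(\field_q)$ (inside which $N(P)=B$ holds), but $\orthp{2m}{q}$ is its degree-two extension by the graph automorphism, realized here by the element $\omega$ interchanging $y_m$ and $x_m$. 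Since the root system $D_m$ has no root $2\epsilon_m$, one checks directly that every element of $P$ has zero entry in the $(y_m,x_m)$ position, so conjugation by the transposition $\omega$ preserves upper-triangularity and $\omega$ normalizes $P$. Thus $N_{G_m}(P)\supseteq B\langle\omega\rangle\supsetneq B$, and $\omega\notin B$ (it has determinant $-1$, while $B\subset\mathrm{SO}^+_{2m}$). Concretely: in your sum, $w=\omega$ fixes $e_1$, satisfies $[P:P\cap\omega^{-1}P\omega]=1$, and contributes $1$, so the coefficient is $\tfrac12(1+1)=1$, not $\tfrac12$. The paper gets $1$ for exactly this reason. (Your final conclusion survives only by luck: both $1$ and $\tfrac12$ are nonzero in $\field_q$ for $p$ odd.) One more small point worth making explicit: for $w$ not stabilizing $e_1$, you do not even need the index to vanish, since $y_1$ is a factor of $N(y_1)$ and $y_1(we_1)=0$, so $N(y_1)(we_1)=0$ outright; the index argument is then only needed for $w$ in the stabilizer of $e_1$, where one must identify exactly which such $w$ have index $1$, and the answer is $\{1,\omega\}$, not $\{1\}$.
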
 

\begin{proof}
Note that $y_m^{(q-1)q^{n-2}}$ is the largest monomial of 
degree $(q-1)q^{n-2}$.
The elements of $W$ permute the variables. Every term of $N(y_m)$ is divisible by $y_m$. Therefore the lead term of $N(y_m)^{q-1}w$ is
$y_m^{(q-1)q^{n-2}}$ if and only if $w$ stablises $y_m$. Since elements of $B$ do not increase the $y_m$-degree, the only coset representatives that contribute to the coefficient of
$y_m^{(q-1)q^{n-2}}$ in $\R(N(y_m)^{q-1})$ are those of the form $wb$ were $w$ stablises $y_m$.
If $w$ stabilises $y_m$, then all of the coset representatives of the form $wb$ make the same contribution to the coefficient. The number of coset representatives associated to a double coset
is either $1$, if $P\cap w^{-1}Pw=P$, or congruent  to $0$ modulo $p$.
The only elements of $W$ for which $P\cap w^{-1}Pw=P$ are $1$ and $\omega$.
Therefore the coefficient of $y_m^{(q-1)q^{n-2}}$ in $\R(N(y_m)^{q-1})$ is $(1/2)2=1$.
\end{proof}

\section{The Invariants of \texorpdfstring{$\orthp{4}{q}$}{}}\label{base_case}

The ring of invariants for $\orthp{4}{q}$ was computed by
Huah Chu \cite{chu-polyinvorth:01}. 
In this section we provide an alternative computation and show that
Lemma~\ref{oplus_tech-lem} is satisfied for $m=2$.  This provides the base case for the inductive proof of the lemma and implies that Theorem~\ref{oplus_thm_v2} holds for $m=2$.

The group $\orthp{2}{q}$ has order $2(q-1)$ and the ring of invariants is generated by $\xi_1=y_1x_1=u_1$ and $d_{1,1}=y_1^{q-1}+x_1^{q-1}=\xi_2/\xi_1$.

\begin{lem}\label{u2lem} $u_2=\xi_1^q(\xi_3+c_{3,2})-\xi_2^{q+1}$ with $c_{3,2}\in R_2$. 
 Furthermore 
 $$c_{3,2}=\xi_1\xi_2^{q-1}+\xi_1^{q+2}\xi_2^{q-3}-\sum_{j=2}^{(q-1)/2}(-1)^j\xi_2^{q-1-2j}\xi_1^{1+j(q+1)}\frac{(q-2-j)!}{j!((q-1)-2j)!}$$
 and $\ker(\Phi_{3,1})=u_2R_3$.
 \end{lem}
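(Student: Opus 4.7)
The plan is to substitute the identities already at our disposal into the definition $u_2 = x_1 N(y_1) \cdot Y_2 X_2$ and simplify. From Lemma~\ref{ortat:3}, write $x_1 N(y_1) = \xi_2 + g$ for some $g \in Q^H[\xi_0, \xi_1]$; equation~\ref{ortat:1} gives $Y_2 X_2 = \xi_0^q - \xi_1 x_1^{q-1} + \xi_0 x_1^{2(q-1)}$. Multiplying yields
\[
u_2 = (\xi_2 + g)\bigl(\xi_0^q - \xi_1 x_1^{q-1} + \xi_0 x_1^{2(q-1)}\bigr).
\]
To eliminate the remaining positive powers of $x_1$, I would substitute equation~\ref{ortat:2} with $j=1$, namely $\xi_2 x_1^{q-1} = \xi_1^q - \xi_1 x_1^{q^2-1} + 2\xi_0 x_1^{q^2+q-2} - \psi_1(\xi_1)$, together with repeated use of \ref{ortat:1}, absorbing the $\psi_1$-contributions into the factor $g \cdot Y_2 X_2$. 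The crucial cancellation is that the lex lead monomials of $\xi_0^q \xi_2$ and $\xi_1^{q+1}$ coincide (both equal to $y_1^{q^2+q} x_1^{q+1}$ with coefficient $1$), while $u_2$ itself has $y_1$-degree only $q^2$; this forces the shape $u_2 = \xi_0^q \xi_2 - \xi_1^{q+1} + \xi_0^q c_{2,2}$ with $c_{2,2} \in R_1$ of $S$-degree $q^2+1$.

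To identify $c_{2,2}$ explicitly, I would pass to $S_2/x_1 S_2$. There $u_2 \equiv 0$, $\xi_0 \equiv y_2 x_2$, and $\xi_i \equiv y_2 x_2\,(y_2^{q^i-1}+x_2^{q^i-1})$ for $i \geq 1$, so the congruence $\xi_0^q c_{2,2} \equiv \xi_1^{q+1} - \xi_0^q \xi_2 \pmod{x_1}$ must hold in $\field_q[y_2, x_2]$. Expanding $(y_2^{q-1}+x_2^{q-1})^{q+1}$ via Lucas' theorem (only the binomial coefficients at $k \in \{0,1,q,q+1\}$ survive modulo $p$) and subtracting $y_2^{q^2-1}+x_2^{q^2-1}$, the right-hand side simplifies to $(y_2 x_2)^{2q}\bigl(y_2^{(q-1)^2}+x_2^{(q-1)^2}\bigr)$. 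Dividing by $\xi_0^q \equiv (y_2 x_2)^q$ gives a symmetric polynomial in $y_2^{q-1}$ and $x_2^{q-1}$ that is to be recognised as the image of the stated factorial sum; the symmetric-function manipulation produces the coefficients $(q-2-j)!/\bigl(j!\,(q-1-2j)!\bigr)$. Since the images of $\xi_0, \xi_1$ in $\field_q[y_2, x_2]$ are algebraically independent, the polynomial $c_{2,2} \in R_1$ is uniquely determined.

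For part~(b), the formula exhibits $u_2$ as linear in $\xi_2$ over $\field_q[\xi_0, \xi_1]$ with leading coefficient $\xi_0^q$ and constant term $\xi_0^q c_{2,2} - \xi_1^{q+1}$. Since $\xi_0$ and $\xi_1$ are algebraically independent in $S_1$, $\gcd(\xi_0^q, \xi_1^{q+1}) = 1$ in the UFD $\field_q[\xi_0, \xi_1]$, so $u_2$ is irreducible and hence prime in $R_2$. The substitution $y_2 = x_2 = 0$ sends each $\xi_i \in S_2$ to $\xi_i \in S_1$ while killing $Y_2$ and therefore $u_2$, placing $u_2$ in $\ker(\Phi_{2,1})$. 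As $\Phi_{2,1}(R_2)$ is $2$-dimensional inside $S_1$ while $R_2$ has Krull dimension~$3$, $\ker(\Phi_{2,1})$ is a prime ideal of height~$1$, and containment of the height-one prime $u_2 R_2$ forces equality.

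The main obstacle is the combinatorial identification of $c_{2,2}$: the structural argument cleanly shows $c_{2,2}$ exists in $R_1$ with the correct degree, but matching it to the explicit factorial sum requires careful bookkeeping, in the spirit of the Lucas/Kummer analysis used in Lemma~\ref{orthcompl}.
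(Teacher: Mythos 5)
Your overall strategy is genuinely different from the paper's. The paper works in $S_1$, applies $\PP^q$ to the identity $\xi_1 = \xi_0 d_{1,1}$, and multiplies by $\xi_0^q$ to obtain the relation $\xi_0^q\xi_2 = \xi_1^{q+1} + \xi_0^{2q}\PP^{q-2}(d_{1,1})$ directly; the lift to $S_2$ and divisibility by $u_2$ then follow from Lemma~\ref{phi_bar_lemma}(c) and a degree count. Your plan instead starts from the factored form $u_2 = x_1 N(y_1)\cdot Y_2 X_2$ and proposes to eliminate $x_1$ using the \tat\ relations. Your Krull-dimension argument for $\ker(\Phi_{2,1}) = u_2 R_2$ (via irreducibility of $u_2$ as a polynomial linear in $\xi_2$ with coprime leading coefficient $\xi_0^q$ and constant term, plus a height-one prime containment) is a clean alternative to the paper's appeal to the general principal-kernel lemma.

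However, the first paragraph has a real gap. The claim that the coincidence of lex lead monomials of $\xi_0^q\xi_2$ and $\xi_1^{q+1}$, together with the $y_1$-degree of $u_2$, ``forces the shape $u_2 = \xi_0^q\xi_2 - \xi_1^{q+1} + \xi_0^q c_{2,2}$'' is not an argument: it presupposes that $u_2 \in R_2 = \field_q[\xi_0,\xi_1,\xi_2]$, which is exactly what needs to be established. The proposed elimination of $x_1$ from $(\xi_2 + g)(\xi_0^q - \xi_1 x_1^{q-1} + \xi_0 x_1^{2q-2})$ is never carried out, and the substitutions you name (equations~\ref{ortat:1} and~\ref{ortat:2}) reintroduce $\psi_1(\xi_j)$ terms, so it is not clear a priori that the elimination terminates inside $R_2$; the ``absorbing the $\psi_1$-contributions'' step is precisely where the work lies. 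Without this, the assertion that $c_{2,2}$ exists in $R_1$ is circular, since your paragraph two identifies $c_{2,2}$ only conditional on the shape, and your paragraph three's irreducibility argument likewise depends on the formula. To close the gap you would either need to execute the elimination explicitly, or do what the paper does and exhibit the relation in $S_1$ by one Steenrod computation and then lift.

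On a positive note, your second paragraph is sound as a way to identify $c_{2,2}$ once $u_2 \in R_2$ is known: the congruences modulo $x_1$ are correct, the Lucas-theorem expansion of $(y_2^{q-1}+x_2^{q-1})^{q+1}$ gives exactly $a^{q+1}+a^qb+ab^q+b^{q+1}$, and the simplification to $(y_2x_2)^{2q}(y_2^{(q-1)^2}+x_2^{(q-1)^2})$ checks out. In fact this step is essentially the same calculation the paper does — it computes $\PP^{q-2}(d_{1,1})= -(y^{q^2-2q+1}+x^{q^2-2q+1})$ in $S_1 = \field_q[y_1,x_1]$ and then applies Gould's Girard--Waring formula with $\sigma_1 = d_{1,1}$, $\sigma_2 = \xi_0^{q-1}$, $\xi_0\sigma_1 = \xi_1$ — just transported to $\field_q[y_2,x_2]$ via the identification $S_2/x_1 S_2 \supset \field_q[y_2,x_2] \cong S_1$. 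So paragraph two is a valid restatement rather than a genuinely new route.
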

 \begin{proof} Working in $S_1$, we have $\xi_2=\xi_1d_{1,1}$ where $d_{1,1}=y^{q-1}+x^{q-1}$. Applying $\PP^q$, using the Cartan identity and the stability condition, gives
 $\xi_3=\xi_2\PP^{q-1}(d_{1,1})+\xi_1^q\PP^{q-2}(d_{1,1})=\xi_2d_{1,1}^q+\xi_1^q\PP^{q-2}(d_{1,1})$. 
 Multiplying by $\xi_1^q$ gives $\xi_1^q\xi_3=\xi_2^{q+1}+\xi_1^{2q}\PP^{q-2}(d_{1,1})$.
 We will show that $\xi_1^{q}\PP^{q-2}(d_{1,1})\in R_2$. From this it, follows that the relation lifts to $S_2$, giving a non-zero element of $\ker(\Phi_{3,1})$.
 Using Lemma~\ref{phi_bar_lemma}, the element is divisible by $u_2$
 Comparing lead terms using the lexicographic order in $S_2$ gives $u_2=\xi_1^q(\xi_3+c_{3,2})-\xi_2^{q+1}$ with 
 $c_{3,2}=-\xi_1^{q}\PP^{q-2}(d_{1,1})$.
 A simple calculation gives $\PP^{q-2}(d_{1,1})=-(y^{q^2-2q+1}+x^{q^2-2q+1})$. Taking $a=y^{q-1}$ and $b=x^{q-1}$, we have $-\PP^{q-2}(d_{1,1})=a^{q-1}+b^{q-1}$.
 We can write the symmetric polynomial $a^{q-1}+b^{q-1}$ as a polynomial in $\sigma_1=a+b$ and $\sigma_2=ab$. A closed form for this expression over the integers is
 $$a^{q-1}+b^{q-1}=(q-1)\sum_{j=0}^{(q-1)/2}(-1)^j\sigma_1^{q-1-2j}\sigma_2^j\frac{(q-2-j)!}{j!((q-1)-2j)!},$$
 see \cite{gould:99} formula (1) or (10) (note there is a typographical error in \cite{gould:99}(10)
 where $i+j+1$ is written but $i+j-1$ is intended).   
 Observe that $\sigma_2=\xi_1^{q-1}$, $\sigma_1=d_{1,1}$ and $\xi_1\sigma_1=\xi_2$. Multiplying the above expression by $\xi_1^q$ and substituting
 gives
 $$\xi_1^q\PP^{q-2}(d_{1,1})=\sum_{j=0}^{(q-1)/2}(-1)^j\xi_2^{q-1-2j}\xi_1^{1+j(q+1)}\frac{(q-2-j)!}{j!((q-1)-2j)!}\in R_2,$$
 as required.
\end{proof}

\begin{lem} 
    $$(-1)^j (q-1)\frac{(q-2-j)!}{j!((q-1)-2j)!} \equiv_{(p)} \Cat(j)$$
    where $\Cat(j) = \frac{1}{j+1}\binom{2j}{j}$ is the $j^\text{th}$ Catalan number.
In particular $$c_{3,2} = \sum_{j=0}^{(q-1)/2} \Cat(j)\, \xi_1^{j(q+1)+1} \xi_2^{q-1-2j}.$$
\end{lem}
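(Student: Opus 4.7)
The plan is to convert the stated expression into a form that is manifestly an integer-valued polynomial in $q$, and then exploit that structure to reduce modulo $p$.

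First, I would use the identity
$$(q-1)\frac{(q-2-j)!}{j!(q-1-2j)!} = \frac{q-1}{q-1-j}\binom{q-1-j}{j} = \binom{q-1-j}{j} + \binom{q-2-j}{j-1},$$
which follows from the elementary relation $\frac{n}{n-k}\binom{n-k}{k} = \binom{n-k}{k} + \binom{n-k-1}{k-1}$ (equivalently, Newton's formula for the power sum $p_{q-1}$ in two variables in terms of $\sigma_1,\sigma_2$). Writing $A_j := (-1)^j(q-1)\tfrac{(q-2-j)!}{j!(q-1-2j)!}$, this gives $A_j = (-1)^j f(q)$ where $f(x) := \binom{x-1-j}{j} + \binom{x-2-j}{j-1}$ is an integer-valued polynomial of degree $j$ in $x$.

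Next, I would reduce $f(q)$ modulo $p$ by comparison with $f(0)$. Newton's forward-difference expansion gives
$$f(q) = \sum_{k=0}^{j}\binom{q}{k}\,\Delta^k f(0),$$
with $\Delta^k f(0)\in\mathbb{Z}$ since $f$ is integer-valued. Because $q = p^s$ and $0 < k \leq j < q$, Kummer's theorem yields $\binom{q}{k}\equiv 0\pmod p$, so only the $k=0$ term survives and $f(q)\equiv f(0)\pmod p$.

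Finally, I would evaluate $f(0)$ directly. From the standard identity $\binom{-m}{k} = (-1)^k\binom{m+k-1}{k}$ we obtain $\binom{-1-j}{j} = (-1)^j\binom{2j}{j}$ and $\binom{-2-j}{j-1} = (-1)^{j-1}\binom{2j}{j-1}$, so that
$$f(0) = (-1)^j\Bigl[\binom{2j}{j} - \binom{2j}{j-1}\Bigr] = (-1)^j\Cat(j),$$
using $\Cat(j) = \binom{2j}{j} - \binom{2j}{j-1}$. Combining the three steps yields $A_j\equiv \Cat(j)\pmod p$. The displayed formula for $c_{2,2}$ then follows immediately by substituting these reduced coefficients into the expression of Lemma~\ref{u2lem}.

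The main subtlety is the mod-$p$ reduction itself: the expression as stated has $j!$ in the denominator and $j$ may be much larger than $p$, so factor-by-factor reduction of the factorial form fails. Absorbing the offending $j!$ into a pair of binomial coefficients via Newton's/Waring's identity recasts $A_j$ as the value of an integer-valued polynomial, after which the reduction is governed cleanly by $\binom{p^s}{k}\equiv 0\pmod p$ for $0 < k < p^s$.
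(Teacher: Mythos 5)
Your proof is correct and, unlike the paper's, is fully rigorous. The paper's argument substitutes $q-1 \equiv -1$ and $q-\ell \equiv -\ell$ into the numerator factors and then divides by $j!$, treating the chain as a sequence of equalities in $\field_p$; but since $j$ can be as large as $(q-1)/2$ (e.g. $q=27$, $j=13$, $p=3$, where $v_3(13!)=5>v_3(q)$), the factor $j!$ need not be a unit mod $p$, and the step from $\frac{(q-2j)\cdots(q-2-j)}{j!}$ to $\frac{(-2j)\cdots(-2-j)}{j!}$ is not justified by merely reducing the numerator factors one at a time. You have pinpointed exactly this gap and filled it: by absorbing the $(q-1)$ and $j!$ into a sum of two binomial coefficients via the same Girard--Waring/Newton identity that underlies Lemma~\ref{u2lem}, you recast $A_j = (-1)^j f(q)$ with $f(x)=\binom{x-1-j}{j}+\binom{x-2-j}{j-1}$ a degree-$j$ integer-valued polynomial. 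The Gregory--Newton expansion $f(q)=\sum_{k=0}^j\binom{q}{k}\Delta^k f(0)$ with integer forward differences, together with $\binom{p^s}{k}\equiv 0\pmod p$ for $0<k<p^s$ (and $j<q=p^s$), reduces $f(q)\equiv f(0)\pmod p$, and the evaluation $f(0)=(-1)^j[\binom{2j}{j}-\binom{2j}{j-1}]=(-1)^j\Cat(j)$ is straightforward. What your route buys is a reduction argument that is uniform in $j$, independent of $v_p(j!)$, and incidentally explains \emph{why} the paper's informal term-by-term substitution lands on the right answer: the integer-valued polynomial structure forces $v_p(N(q)-N(0))\geq v_p(j!)+1$ even when that inequality is not visible from $v_p(q)$ alone.
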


\begin{proof}
Both of the expressions $(-1)^j(q-1)\frac{(q-2-j)!}{j!((q-1)-2j)!}$ and 
$\frac{1}{j+1}\binom{2j}{j}$ are integers.  We show they are equal when 
viewed as elements of $\field_p$:
\begin{align*}
        (-1)^j (q-1)\frac{(q-2-j)!}{j!((q-1)-2j)!} & =
         (-1)^{j+1}\frac{(q-2-j)!}{j!((q-1)-2j)!}\\
         & =\frac{(-1)^{j+1}}{j!} (q-2j)(q-2j+1)\cdots(q-2-j)\\
         & = \frac{(-1)^{j+1}}{j!} (-2j)(-2j+1)\cdots(-2-j)\\
        & = \frac{(-1)^{j+1}}{j!}(-1)^{j-1} (2j)(2j-1)\cdots(j+2)\\
        & = \frac{(2j)!}{j!(j+1)!} 
         = \frac{1}{j+1}\binom{2j}{j}\ . \qedhere
    \end{align*}

\end{proof}

\begin{lem}\label{u2_div_lem}
If $u_2$ divides $f$ in $S_2$ and $f\in R_3$ then $f/u_2\in R_3$.
\end{lem}
\begin{proof} From Lemma~\ref{u2lem}, we have $\ker(\Phi_{3,1})=u_2R_3$.
The result follows using Lemma~\ref{phi_bar_lemma}(b) and the argument given in
Remark~\ref{um_div_rem}.
\end{proof}

\begin{lem} \label{u2_st} (a)  $\PP^i(u_2)=0$ for $0<i<q$.\\
 (b) $\PP^i(u_2)/u_2\in R_3$ for $i\not\in\{q^2,q^2+q,q^2+2q\}$.\\
 \end{lem}
 \begin{proof} As an element of $S_2$, $u_2$ is a product of pairwise relatively prime linear forms. 
 Therefore, using Lemma~\ref{steenrod on linear forms}, $u_2$ divides $\PP^i(u_2)$ in $S_2$ for all $i$.
 If $\PP^i(u_2)\in R_3$, it follows from Lemma~\ref{u2_div_lem} that
 $\PP^i(u_2)/u_2\in R_3$.
 Recall from Lemma~\ref{u2lem} that $u_2=\xi_1^q(\xi_3+c_{3,2})-\xi_2^{q+1}$ with $c_{3,2}\in R_2$.
 Therefore, using the Cartan identity, Lemma~\ref{steenrod on q powers},
 and Corollary~\ref{com_st}, we have $\PP^i(u_2)\in R_2\subset R_3$
 for $0<i<q$. 
 Since $u_2$ is linear in $\xi_3$, this give $\PP^i(u_2)=0$ for $0<i<q$, proving (a).
 Again using Corollary~\ref{com_st}, for $i\not\in\{q^2,q^2+q,q^2+2q\}$, we have $\PP^i(u_2)\in R_3$. 
 Therefore, if $i\not\in\{q^2,q^2+q,q^2+2q\}$, we have
 $\PP^i(u_2)/u_2\in R_3$, proving (b).
 \end{proof}

\begin{lem} \label{c22_st} (a) $\PP^1(c_{3,2})=\xi_2^q$.\\
 (b) $\PP^i(c_{3,2})=0$ for $1<i<q$.\\
 (c) $\PP^{q^2}(c_{3,2})\equiv_{\langle\xi_1^{q^2}\rangle}\xi_2^{q^2-q+1}-\xi_1^q\xi_2^{q^2-2q}\xi_3$.\\
 (d) $\PP^{q^2-q}(c_{3,2})\equiv_{\langle\xi_1^{q^2}\rangle}\xi_1\xi_3^{q-1}-\xi_1^q\xi_2^{q^2-2q+1}+\xi_1^{2q}\xi_2^{q^2-3q}\xi_3 $.
 \end{lem}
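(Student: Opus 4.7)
The plan is to compute $\PP^i(c_{2,2})$ directly from the explicit formula
\[
c_{2,2}=\sum_{j=0}^{(q-1)/2}\Cat(j)\,\xi_0^{1+j(q+1)}\xi_1^{q-1-2j}
\]
using the Cartan identity together with Corollary~\ref{com_st}, which gives $\PP(t)(\xi_0)=\xi_0+\xi_1t+\xi_0^qt^2$ and $\PP(t)(\xi_1)=\xi_1+2\xi_0^qt+\xi_2t^q+\xi_1^qt^{q+1}$.

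For part (a), $\PP^1$ is a derivation, so applying it termwise yields $\PP^1(\xi_0^a\xi_1^b)=a\xi_0^{a-1}\xi_1^{b+1}+2b\xi_0^{a+q}\xi_1^{b-1}$. After reindexing $j\mapsto j+1$ in the second summand and reducing mod $p$ (so that $1+j(q+1)\equiv j+1$ and $q+1-2j\equiv 1-2j$), the coefficient of $\xi_0^{j(q+1)}\xi_1^{q-2j}$ for $j\geq 1$ becomes $(j+1)\Cat(j)+2(1-2j)\Cat(j-1)$, which vanishes by the Catalan recurrence $(j+1)\Cat(j)=2(2j-1)\Cat(j-1)$; only the $j=0$ contribution $\xi_1^q$ survives. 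Part (b) is handled analogously for $1<i<q$: iterated Cartan on each $\xi_0^a\xi_1^b$ involves only the three Steenrod operations $\PP^{0,1,2}$ on $\xi_0$ and the two operations $\PP^{0,1}$ on $\xi_1$ (since higher $\PP^j(\xi_1)$ carry $t$-degree at least $q$), so $\PP^i(c_{2,2})$ is a polynomial in $\xi_0,\xi_1$ alone, and a combinatorial cancellation combining the Catalan recurrence with Lucas' theorem shows every coefficient vanishes mod $p$.

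For parts (c) and (d), apply $\PP(t)$ to the identity $\xi_0^q(\xi_2+c_{2,2})=u_2+\xi_1^{q+1}$ of Lemma~\ref{u2lem}. Using the Frobenius identity $\PP(t)(\xi_0^q)=\xi_0^q+\xi_1^qt^q+\xi_0^{q^2}t^{2q}$ and extracting the coefficient of $t^i$ modulo $\xi_0^{q^2}$, we obtain the master relation
\[
\xi_0^q\PP^i(c_{2,2})+\xi_1^q\PP^{i-q}(c_{2,2})\equiv \PP^i(u_2)+\PP^i(\xi_1^{q+1})-\xi_0^q\PP^i(\xi_2)-\xi_1^q\PP^{i-q}(\xi_2)\pmod{\xi_0^{q^2}}.
\]
A short Cartan expansion of $\PP(t)(\xi_1^{q+1})=\PP(t)(\xi_1)^{q+1}$ (using Frobenius on the second factor) together with Corollary~\ref{com_st}(c) gives $\PP^{q^2-q}(\xi_1^{q+1})=0$, $\PP^{q^2}(\xi_1^{q+1})=\xi_1\xi_2^q$, $\PP^{q^2-q}(\xi_2)=\PP^{q^2-2q}(\xi_2)=0$, and $\PP^{q^2}(\xi_2)=\xi_3$. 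The values of $\PP^{q^2-q}(u_2)$ and $\PP^{q^2}(u_2)$ modulo $\xi_0^{q^2}$ come from the definition $\PP^{e(i,2)}(u_2)=u_2d_{i,2}$ (with $e(1,2)=q^2$, so $\PP^{q^2}(u_2)=u_2d_{1,2}$) together with an explicit determination of $d_{1,2}$ mod $\xi_0^{q^2}$ via the coefficients of $\psi(u_2)/u_2$ from Theorem~\ref{min_poly_thm}. Applying the master relation at $i=q^2-q$ and solving for $\PP^{q^2-q}(c_{2,2})$ yields (d); substituting this into the $i=q^2$ instance then gives (c).

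The main obstacle is the bookkeeping needed to handle the feedback term $\xi_1^q\PP^{q^2-2q}(c_{2,2})$ on the left side of the master relation, which in turn requires an auxiliary descent computing $\PP^{q^2-kq}(c_{2,2})$ and $\PP^{q^2-kq}(u_2)$ modulo $\xi_0^{q^2}$ for small $k$. The descent closes because only finitely many terms contribute below the given power of $\xi_0$, but verifying this in the small-$q$ cases (notably $q=3$, where $q^2-3q=0$ so the recursion hits $c_{2,2}$ itself) requires separate care.
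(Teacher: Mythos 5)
For parts (a) and (b) your route is a genuinely different one from the paper's: you compute $\PP^i(c_{2,2})$ termwise from the explicit Catalan formula and cancel via the Catalan recurrence. This works for (a) (the coefficient of $\xi_0^{j(q+1)}\xi_1^{q-2j}$ collapses to $(j+1)\Cat(j)+2(1-2j)\Cat(j-1)\equiv 0$), and in principle extends to (b), but you leave the combinatorics there as an assertion. The paper's route is both shorter and immediate: apply $\PP^i$ to the identity $u_2 = \xi_0^q(\xi_2+c_{2,2})-\xi_1^{q+1}$ of Lemma~\ref{u2lem} and invoke $\PP^i(u_2)=0$ for $0<i<q$ (Lemma~\ref{u2_st}(a)), after which the $\xi_2$ and $\xi_1^{q+1}$ pieces fall out by Corollary~\ref{com_st}; there is nothing to cancel. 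You did not use the vanishing of $\PP^i(u_2)$, which is the key simplification.

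For parts (c) and (d) there is a genuine circularity in your plan. Your master relation rearranges the Cartan expansion of $\PP^i$ applied to $u_2=\xi_0^q(\xi_2+c_{2,2})-\xi_1^{q+1}$, and to solve it for $\PP^{q^2}(c_{2,2})$ you need to know $\PP^{q^2}(u_2)=u_2 d_{1,2}$ (and $\PP^{q^2-kq}(u_2)$ in the descent) modulo $\xi_0^{q^2}$. But the paper obtains exactly these (Lemma~\ref{u2dlem}) \emph{from} parts (c) and (d) of the present lemma. Your fallback --- ``an explicit determination of $d_{1,2}$ mod $\xi_0^{q^2}$ via the coefficients of $\psi(u_2)/u_2$'' --- does not break the loop, since $\psi(u_2)/u_2=\sum_\ell\PP^{\deg u_2-\ell}(u_2)/u_2\cdot(-t^{q-1})^\ell$ is built from precisely the unknown quantities $\PP^j(u_2)$; the theorem identifies this polynomial abstractly (as a product over an orbit), which gives $d_{i,m}$ modulo $I=\langle x_1,\ldots,x_m\rangle$ but not modulo $\xi_0^{q^2}$. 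Your acknowledged ``feedback'' descent likewise does not close, because at each step it also requires $\PP^{q^2-kq}(u_2)\bmod\xi_0^{q^2}$, for which Lemma~\ref{u2_st}(b) only provides a divisibility statement, not an explicit residue. The paper avoids all of this by staying inside $c_{2,2}$: from the explicit formula (Lemma~\ref{u2lem}) one has $c_{2,2}=\xi_0\xi_1^{q-1}+\xi_0^{q+2}\xi_1^{q-3}+(\text{terms in }\langle\xi_0^{2q}\rangle)$; Lemma~\ref{sub_max}(d), applied with $j=\deg(c_{2,2})-2q=q^2+1-2q$ so that $2q+j-1=q^2$, shows the $\xi_0^q$-divisible part contributes $\equiv 0 \pmod{\xi_0^{q^2}}$ under $\PP^{q^2}$; and then $\PP^{q^2}(\xi_0\xi_1^{q-1})$ (and for (d), $\PP^{q^2-q}$ of the two leading pieces) is computed directly by Cartan. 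No reference to $u_2$ is needed. A minor inaccuracy: $\PP^{q^2-q}(\xi_1^{q+1})$ is not literally zero (for $q=3$ it equals $2\xi_0^{q^2}\xi_2$); it is only zero modulo $\xi_0^{q^2}$, which is what your relation requires, so state it that way.
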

 \begin{proof} Using Lemma~\ref{u2_st}, we have $\PP^1(u_2)=0$. Thus
 $$0=\PP^1(\xi_1^q(\xi_3+c_{3,2})-\xi_2^{q+1})=\xi_1^q(\xi_2^q+\PP^1(c_{3,2}))-2\xi_1^q\xi_2^q.$$
 Therefore $\PP^1(c_{3,2})=\xi_2^q$, proving (a).
 Again using Lemma~\ref{u2_st}, for $1<i<q$, we have 
 $$0=\PP^i(u_2)=\PP^i(\xi_1^q(\xi_3+c_{3,2})-\xi_2^{q+1})=\xi_1^q\PP^i(c_{3,2}),$$
 giving $\PP^i(c_{3,2})=0$.
  Since $c_{3,2}\equiv_{\langle\xi_1^q\rangle}\xi_1\xi_2^{q-1}$, it follows from Lemma \ref{sub_max} that
  $$\PP^{q^2}(c_{3,2})\equiv_{\langle\xi_1^{q^2}\rangle}\PP^{q^2}(\xi_1\xi_2^{q-1})=\xi_2^{q^2-q+1}-\xi_1^q\xi_2^{q^2-2q}\xi_3,$$
  proving (c).  To prove (d), we first compute 
  $$\PP^{q^2-q}(\xi_1\xi_2^{q-1})=\xi_1\PP^{q^2-q}(\xi_2^{q-1})+\xi_2\PP^{q^2-q-1}(\xi_2^{q-1})+\xi_1^q\PP^{q^2-q-2}(\xi_2^{q-1}).$$
Note that $\PP^{q^2-q}(\xi_2^{q-1})=\xi_3^{q-1}$, $\PP^{q^2-q-1}(\xi_2^{q-1})=-2\xi_1^q\xi_2^{q^2-2q}$ and
$$\PP^{q^2-q-2}(\xi_2^{q-1})=-\xi_2^{q^2-2q+1}-\xi_3\PP^{q^2-2q-2}(\xi_2^{q-2})= -\xi_2^{q^2-2q+1}+4\xi_3\xi_1^q\xi_2^{q^2-3q}.$$
Therefore $\PP^{q^2-q}(\xi_1\xi_2^{q-1})=\xi_1\xi_3^{q-1}-3\xi_1^q\xi_2^{q^2-2q+1}+4\xi_1^{2q}\xi_2^{q^2-3q}\xi_3$.
Observe that $\PP^{q^2-q}(\xi_1^{q+2}\xi_2^{q-3})\equiv_{\langle\xi_1^{q^2}\rangle} \xi_2^q\PP^{q^2-2q}(\xi_1^2\xi_2^{q-3})$
and $$\PP^{q^2-2q}(\xi_1^2\xi_2^{q-3})=2\xi_1^q\xi_2^{q^2-3q+1}-3\xi_1^{2q}\xi_2^{q^2-4q}\xi_3.$$
Therefore $\PP^{q^2-q}(\xi_1^{q+2}\xi_2^{q-3})\equiv_{\langle\xi_1^{q^2}\rangle}2\xi_1^q\xi_2^{q^2-2q+1}-3\xi_1^{2q}\xi_2^{q^2-3q}\xi_3$.
Suppose $b$ is homogeneous of degree $q^2-4q+1$. Then 
$\PP^{q^2-q}(\xi_1^{2q}b)\equiv_{\langle\xi_1^{q^2}\rangle} \xi_2^q\PP^{q^2-2q}(\xi_1^qb)$ and, using Lemma \ref{sub_max},
$\PP^{q^2-2q}(\xi_1^qb) =\xi_1^{q^2}\PP^{q^2-4q}(b)$. Therefore $\PP^{q^2-q}(\xi_1^{2q}b)\equiv_{\langle\xi_1^{q^2}\rangle} 0$
and 
\begin{eqnarray*}
\PP^{q^2-q}(c_{3,2})&\equiv_{\langle\xi_1^{q^2}\rangle}&\PP^{q^2-q}(\xi_1\xi_2^{q-1})+\PP^{q^2-q}(\xi_1^{q+2}\xi_2^{q-3})\cr
&\equiv_{\langle\xi_1^{q^2}\rangle}& \xi_1\xi_3^{q-1}-\xi_1^q\xi_2^{q^2-2q+1}+\xi_1^{2q}\xi_2^{q^2-3q}\xi_3
\end{eqnarray*}
as required. 
 \end{proof}

By Definition, we have $\PP^{q^2}(u_2)=u_2d_{1,2}$ and $\PP^{q^2+q}(u_2)=u_2d_{2,2}$.

\begin{lem}\label{u2dlem} (a) $u_2d_{1,2}-\xi_1^q\xi_4\in R_3$ and
 $$u_2d_{1,2}\equiv_{\langle \xi_1^{q^2}\rangle} \xi_1^q\xi_4-\xi_2\xi_3^q+\xi_1\xi_2^q\xi_3^{q-1}.$$
 (b) $u_2d_{2,2}-\xi_2^q\xi_4\in R_3$ and
 $$u_2d_{2,2}\equiv_{\langle \xi_1^{q^2}\rangle} \xi_2^q\xi_4-\xi_3^{q+1}-\xi_1^q\xi_2^{q^2-q}\xi_3.$$
 \end{lem}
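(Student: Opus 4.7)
The plan is to apply $\PP^{e(i,2)}$ to both sides of the identity $u_2=\xi_0^q(\xi_2+c_{2,2})-\xi_1^{q+1}$ supplied by Lemma~\ref{u2lem}, taking $e(1,2)=q^2$ for part~(a) and $e(2,2)=q^2+q$ for part~(b), and to evaluate each piece by the Cartan identity. Since $\PP^a(\xi_0^q)$ is non-zero only for $a\in\{0,q,2q\}$ (with values $\xi_0^q$, $\xi_1^q$, $\xi_0^{q^2}$) and $\PP^b(\xi_2)$ is non-zero only for $b\in\{0,1,q^2,q^2+1\}$ (from Corollary~\ref{com_st}), the Cartan expansions collapse to only a handful of surviving summands.

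For part (a), Cartan immediately yields $\PP^{q^2}(\xi_0^q\xi_2)=\xi_0^q\xi_3$ and $\PP^{q^2}(\xi_1^{q+1})=\xi_1\xi_2^q$. Working modulo $\xi_0^{q^2}$, the $\xi_0^{q^2}\PP^{q^2-2q}(c_{2,2})$ Cartan summand is absorbed, leaving
$$\PP^{q^2}(\xi_0^q c_{2,2})\equiv \xi_0^q\PP^{q^2}(c_{2,2})+\xi_1^q\PP^{q^2-q}(c_{2,2})\pmod{\xi_0^{q^2}}.$$
Substituting the formulas from Lemma~\ref{c22_st}(c) and~(d), the paired terms $\pm\xi_0^q\xi_1^{q^2-q+1}$ cancel, as do $\mp\xi_0^{2q}\xi_1^{q^2-2q}\xi_2$, leaving $\xi_0\xi_1^q\xi_2^{q-1}$. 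Summing the three contributions yields the claimed congruence. The $R_2$-membership of $u_2d_{1,2}-\xi_0^q\xi_3$ is immediate, since $\xi_3$ can enter only through $\PP^{q^2}(\xi_2)$, and neither $c_{2,2}$ nor $\xi_1^{q+1}$ contains $\xi_2$.

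For part (b), analogous Cartan expansions give $\PP^{q^2+q}(\xi_0^q\xi_2)=\xi_1^q\xi_3$ (from the $(a,b)=(q,q^2)$ pair) and $\PP^{q^2+q}(\xi_1^{q+1})=\xi_1^{q^2+1}+\xi_2^{q+1}$, where the $\xi_1^{q^2+1}$ arises from the $(0,q^2+q)$ pair via $\PP^{q+1}(\xi_1)=\xi_1^q$. The key observation that makes this part tractable is
$$\deg(c_{2,2})=\deg(\xi_2)=q^2+1<q^2+q,$$
so $\PP^{q^2+q}(c_{2,2})=0$ by definition of the Steenrod operations. Consequently the Cartan expansion reduces to
$$\PP^{q^2+q}(\xi_0^q c_{2,2})\equiv \xi_1^q\PP^{q^2}(c_{2,2})\equiv \xi_1^{q^2+1}-\xi_0^q\xi_1^{q^2-q}\xi_2\pmod{\xi_0^{q^2}}$$
by Lemma~\ref{c22_st}(c). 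Assembling the three pieces, the two $\xi_1^{q^2+1}$ contributions cancel and the remainder is precisely $\xi_1^q\xi_3-\xi_2^{q+1}-\xi_0^q\xi_1^{q^2-q}\xi_2$, as claimed. The membership $u_2d_{2,2}-\xi_1^q\xi_3\in R_2$ follows by the same $\xi_3$-accounting as in part~(a). The only real care needed throughout is tracking exactly which Cartan summands survive modulo $\xi_0^{q^2}$ and verifying that these tuned cancellations go through; Lemma~\ref{c22_st} has been designed to make them work.
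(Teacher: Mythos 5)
Your proposal is correct and follows essentially the same route as the paper: apply $\PP^{q^2}$ and $\PP^{q^2+q}$ to the identity $u_2=\xi_0^q(\xi_2+c_{2,2})-\xi_1^{q+1}$ from Lemma~\ref{u2lem}, expand via the Cartan identity and Corollary~\ref{com_st}, and reduce modulo $\langle\xi_0^{q^2}\rangle$ using Lemma~\ref{c22_st}. You make the degree argument $\deg(c_{2,2})=q^2+1<q^2+q$ (hence $\PP^{q^2+q}(c_{2,2})=0$) explicit, which the paper uses implicitly behind the phrase ``using the stability condition,'' but otherwise the Cartan bookkeeping and cancellations are identical.
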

 
 \begin{proof} Using Lemma~\ref{u2lem}, Corollary~\ref{com_st} and the Cartan identity, 
 $$\PP^{q^2}(u_2)=\xi_1^q(\xi_4 +\PP^{q^2}(c_{3,2}))-\xi_2\xi_3^q+\xi_2^q\PP^{q^2-q}(c_{3,2})+\xi_1^{q^2}\PP^{q^2-2q}(c_{3,2}).$$
 Since $c_{3,2}\in R_2$, we have $\PP^i(c_{3,2})\in R_3$ and $u_2d_{1,2}-\xi_1^q\xi_4\in R_3$.
 Using Lemma \ref{c22_st}, we have 
 \begin{eqnarray*}u_2d_{1,2}=\PP^{q^2}(u_2)&\equiv_{\langle \xi_1^{q^2}\rangle} &\xi_1^q(\xi_4 +\xi_2^{q^2-q+1}-\xi_1^q\xi_2^{q^2-2q}\xi_3) -\xi_2\xi_3^q\cr
 &&+\xi_2^q(\xi_1\xi_3^{q-1}-\xi_1^q\xi_2^{q^2-2q+1}+\xi_1^{2q}\xi_2^{q^2-3q}\xi_3 )\cr
 &\equiv_{\langle \xi_1^{q^2}\rangle} & \xi_1^q\xi_4  -\xi_2\xi_3^q+\xi_1\xi_2^q\xi_3^{q-1}
 \end{eqnarray*}
 as required. 
 Using the stability condition
 $$ \PP^{q^2+q}(u_2)=\xi_2^q(\xi_4+\PP^{q^2}(c_{3,2}))-\xi_3^{q+1}-\xi_2^{q^2+1}+\xi_1^{q^2}\PP^{q^2-q}(c_{3,2}).$$
 Using Lemma \ref{c22_st} gives
 $$\PP^{q^2+q}(u_2)\equiv_{\langle \xi_1^{q^2}\rangle} \xi_2^q\xi_4 -\xi_3^{q+1} -\xi_2^{q^2+1}+\xi_2^q(\xi_2^{q^2-q+1}-\xi_1^q\xi_2^{q^2-2q}\xi_3).$$
 Therefore $u_2d_{2,2} \equiv_{\langle \xi_1^{q^2}\rangle} \xi_2^q\xi_4-\xi_3^{q+1}-\xi_1^q\xi_2^{q^2-q}\xi_3$, as required.
 \end{proof}

\begin{lem}\label{u2d_st} (a) For $0<i<q$, $\PP^i(d_{1,2})$ and $\PP^i(d_{2,2})$ lie in $R_3$.\\
 (b) For $0<i<q^3$, write $i=\ell q^2+r$ with $r<q^2$. Then $u_2^{\ell+1}\PP^i(d_{1,2})$ and  $u_2^{\ell+1}\PP^i(d_{2,2})$
 lie in $R_4$ with $\xi_4$-degree at most $\ell+1$.
 \end{lem}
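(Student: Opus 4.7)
The plan is to apply $\PP^i$ to the expressions for $u_2 d_{j,2} \in R_3$ from Lemma \ref{u2dlem} and expand via the Cartan identity,
\[
    \PP^i(u_2 d_{j,2}) = \sum_{k=0}^{i} \PP^k(u_2)\, \PP^{i-k}(d_{j,2}),
\]
then extract information about $\PP^i(d_{j,2})$ itself by exploiting the structural description of $\PP^k(u_2)$ supplied by Lemma \ref{u2_st}.

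For part (a), the range $0 < i < q$ together with Lemma \ref{u2_st}(a) kills every term with $k > 0$, collapsing the identity to $u_2\, \PP^i(d_{j,2}) = \PP^i(u_2 d_{j,2})$. The right-hand side lies in $R_2$: from Lemma \ref{u2dlem} we have $u_2 d_{j,2} = \xi_0^q \xi_3 + f$ (resp.\ $\xi_1^q \xi_3 + f$) with $f \in R_2$; for $0 < i < q$ one has $\PP^i(\xi_0^q) = 0$ since $q \nmid i$, $\PP^i(\xi_3) \in \{\xi_2^q, 0\}$, and $\PP^i$ clearly preserves $R_2$. To deduce $\PP^i(d_{j,2}) \in R_2$ itself and not merely that $u_2$ times it lies in $R_2$, I introduce the restriction homomorphism $\rho : S_2 \to S_1$ defined by $y_2, x_2 \mapsto 0$. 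From $N(y_2) = y_2^q - y_2 x_1^{q-1}$ and $N(x_2) = x_2^q - x_2 x_1^{q-1}$ one sees $\rho(u_2) = 0$, so $\rho$ annihilates $F := u_2\, \PP^i(d_{j,2})$. Since the restriction of $\rho$ to $R_2 \subset S_2$ coincides with $\Phi_{2,1} : R_2 \to S_1$, the abstract lift of $F$ lies in $\ker \Phi_{2,1} = u_2 R_2$ by Lemma \ref{u2lem}; cancelling $u_2$ in the integral domain $S_2$ yields $\PP^i(d_{j,2}) \in R_2$.

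Part (b) proceeds by induction on $i$, with $i = 0$ being Lemma \ref{u2dlem}. Writing $i = \ell q^2 + r$, I isolate $u_2\, \PP^i(d_{j,2})$ in the Cartan identity and split the $k > 0$ sum using Lemma \ref{u2_st}: for $k \notin \{q^2,\, q^2+q,\, q^2+2q\}$, $\PP^k(u_2) = u_2 h_k$ with $h_k \in R_2$; for $k \in \{q^2,\, q^2+q\}$, $\PP^k(u_2) = u_2 d_{1,2}$ or $u_2 d_{2,2}$ by Lemma \ref{u2dlem}; while the remaining exponent $k = q^2+2q$ requires a direct Steenrod calculation on $u_2 = \xi_0^q(\xi_2 + c_{2,2}) - \xi_1^{q+1}$ via Corollary \ref{com_st} and Lemma \ref{c22_st}, which places $\PP^{q^2+2q}(u_2)$ in $R_3$ with $\xi_3$-degree $1$. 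After multiplying by $u_2^\ell$ and applying the inductive hypothesis to each $\PP^{i-k}(d_{j,2})$ (noting $\lfloor (i-k)/q^2 \rfloor \leq \ell$), every summand becomes an element of $R_3$ of $\xi_3$-degree at most $\ell + 1$: the ``good-$k$'' contributions because $h_k \in R_2$ preserves $\xi_3$-degree; the contributions at $k \in \{q^2, q^2+q\}$ because $u_2 d_{j',2} \in R_3$ has $\xi_3$-degree $1$ while $u_2^{\ell}\, \PP^{i-k}(d_{j,2}) \in R_3$ carries at most $\ell$ more by induction. Finally $\PP^i(u_2 d_{j,2})$ itself sits in $R_3$ with $\xi_3$-degree at most $1$, since $\PP^k(\xi_3) \in \{\xi_3, \xi_2^q, 0\}$ for $0 \leq k < q^3$ and no $\xi_4$ can be produced.

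The principal obstacle is the exceptional exponent $k = q^2 + 2q$, pointedly omitted from Lemma \ref{u2_st}(b): one must compute $\PP^{q^2+2q}(u_2)$ directly, verify it lies in $R_3$ with $\xi_3$-degree $1$, and confirm its contribution respects the inductive bound --- none of which follows from the earlier lemmas. A secondary, more routine obstacle is the careful bookkeeping needed to distribute $u_2^\ell$ across the Cartan summands uniformly in $i$, especially in the boundary strata where $\lfloor (i-k)/q^2 \rfloor$ equals $\ell$ versus $\ell - 1$.
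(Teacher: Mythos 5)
Your part (a) is correct and takes essentially the same route as the paper, using the concrete restriction $\rho : S_2 \to S_1$ (setting $y_2,x_2 \mapsto 0$) in place of the $S_2/x_1 S_2$ quotient of Lemma~\ref{phi_bar_lemma} to see that the abstract lift of $u_2\PP^i(d_{j,2}) \in R_2$ lies in $\ker\Phi_{2,1} = u_2 R_2$; the two constructions identify the same kernel and either works.

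Part (b) contains a genuine error in the bound on $\PP^i(u_2 d_{j,2})$, together with a misconception about where the difficulty lies. Your claim that $\PP^i(u_2 d_{j,2})$ has $\xi_3$-degree at most $1$ is false, and the justification you give --- that $\PP^k(\xi_3) \in \{\xi_3, \xi_2^q, 0\}$ for $k < q^3$ --- only prevents $\xi_4$ from appearing; it does not control the many $\xi_2$'s in the $R_2$-part $f$ of $u_2 d_{j,2} = \xi_0^q\xi_3 + f$. Lemma~\ref{u2dlem} shows $f$ already carries terms such as $\xi_1\xi_2^q$ and $\xi_0\xi_1^q\xi_2^{q-1}$, and each $\xi_2$ is promoted to $\xi_3$ by $\PP^{q^2}$; Cartan then produces $\xi_3$-degree up to $\lfloor i/q^2\rfloor = \ell$ from $f$, hence $\ell+1$ in total, which is exactly the bound the paper states (via Corollary~\ref{com_st}) and the bound needed to close the induction --- your conclusion survives only because $\ell+1$ happens to suffice anyway. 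Separately, your ``principal obstacle'' at $k = q^2 + 2q$ is a false alarm: the paper never computes $\PP^{q^2+2q}(u_2)$ at all, but handles the entire range $q^2 \le k < q^3$ in one stroke by noting from Lemma~\ref{u2lem} that $u_2 \in R_2$ is linear in $\xi_2$, so $\PP^k(u_2) \in R_3$ with $\xi_3$-degree at most $1$ for $k < q^3$, while $\ell' \le \ell - 1$ forces $u_2^\ell\PP^{i-k}(d)$ to have $\xi_3$-degree at most $\ell$. No term-by-term Steenrod calculation is required for any exceptional exponent.
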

 \begin{proof} Suppose $0<i<q$. Using Lemma \ref{u2_st}, $\PP^i(u_2)=0$. Therefore $\PP^i(u_2d_{1,2})=u_2\PP^i(d_{1,2})$ and
 $\PP^i(u_2d_{2,2})=u_2\PP^i(d_{2,2})$. Since $u_2d_{1,2}-\xi_1^q\xi_4\in R_3$ and $i<q$, we have 
 $\PP^i(u_2d_{1,2}-\xi_1^q\xi_4)\in R_3$. 
 Since $\PP^1(\xi_1^q\xi_4)=\xi_1^q\xi_3^q$ and $\PP^i(\xi_1^q\xi_4)=0$ for $1<i<q$, 
 we see that 
 $u_2\PP^i(d_{1,2})=\PP^i(u_2d_{1,2})\in R_3$.
 It then follows from Lemma~\ref{u2_div_lem} that $\PP^i(d_{1,2})\in R_3$.
 Similarly, since $u_2d_{2,2}-\xi_2^q\xi_4\in R_3$, we have 
 $u_2\PP^i(d_{2,2})=\PP^i(u_2d_{2,2})\in R_3$, proving $\PP^i(d_{2,2})\in R_3$,
 completing the proof of (a).
 
The proof of (b) is by induction on $i$. Part (a) gives (b) for $i<q$. 
Suppose  $i=\ell q^2+r$ with $r<q^2$ and $\ell<q$. Using Lemma \ref{com_st}, for $f\in R_3$ we have $\PP^i(f)\in R_4$ with $\xi_4$-degree at most $\ell$.
 Using Lemma~\ref{u2dlem}(a), $u_2d_{1,2}-\xi_1^q\xi_4\in R_3$. Therefore $\PP^i(u_2d_{1,2})\in R_4$ with $\xi_4$-degree at most $1$ when $\ell=0$ and at most $\ell$ otherwise.
 Similarly, using Lemma~\ref{u2dlem}(b), $u_2d_{2,2}-\xi_2^q\xi_4\in R_3$. Thus  $\PP^i(u_2d_{2,2})\in R_4$ with 
 $\xi_4$-degree at most $1$ when $\ell=0$ and at most $\ell$ otherwise. In the following, let $d$ denote either $d_{1,2}$ or $d_{2,2}$.
 Using the Cartan identity,
 $$u_2\PP^i(d)=\PP^i(u_2d)-\sum_{j>0}\PP^j(u_2)\PP^{i-j}(d).$$
 We have seen that $\PP^i(u_2d)\in R_4$ with $\xi_4$-degree at most $\ell+1$. 
 To prove (b), it is sufficient to show that for $j>0$, we have $u_2^\ell \PP^j(u_2)\PP^{i-j}(d)\in R_4$ with
 $\xi_4$-degree at most $\ell+1$. 
 Write $i-j=\ell' q^2+r'$ with $r'<q^2$ and $\ell'\leq\ell$. By induction $u_2^{\ell'+1}\PP^i(d)\in R_4$
 with $\xi_4$-degree at most $\ell'+1$.
 
 Suppose $j<q^2$. Using Lemma \ref{u2_st}, we have $\PP^j(u_2)/u_2\in R_3$.
 Therefore $$u_2^\ell \PP^j(u_2)\PP^{i-j}(d)=(\PP^j(u_2)/u_2)u_2^{\ell+1}\PP^{i-j}(d)\in R_4$$ with $\xi_4$-degree at most $\ell+1$, as required.
 
For $q^2\leq j< q^3$, we  have $\ell'+1\leq \ell$ and $\PP^j(u_2)\in R_4$ with $\xi_4$-degree at most $1$.
Therefore $u_2^\ell\PP^{i-j}(d)\in R_4$ with $\xi_4$-degree at most $\ell$ and $u_2^\ell \PP^j(u_2)\PP^{i-j}(d)\in R_4$ with
 $\xi_4$-degree at most $\ell+1$, as required.
\end{proof}

\begin{lem}\label{special_u2d_st} 
Suppose $i=(q-1)q^{2}+r$ with $r<q^{2}$ and $i\not\equiv_{(q)}0$. 
Then $u_2^q\PP^i(d_{j,2})$ has $\xi_4$-degree at most $q-1$
\end{lem}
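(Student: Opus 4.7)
The plan is to follow the proof template of Lemma~\ref{u2d_st}(b) almost verbatim, sharpening the $\xi_3$-degree bound from $q$ to $q-1$ via the single additional observation that $\PP^i$ of a $q$-th power vanishes when $q\nmid i$.

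By the Cartan identity and multiplication by $u_2^{q-1}$, with $d\in\{d_{1,2},d_{2,2}\}$,
$$u_2^q\PP^i(d) = u_2^{q-1}\PP^i(u_2 d) - u_2^{q-1}\sum_{j>0}\PP^j(u_2)\PP^{i-j}(d).$$
For the first term, Lemma~\ref{u2dlem} writes $u_2 d = \xi_a^q\xi_3 + g$ with $g\in R_2$ and $\xi_a\in\{\xi_0,\xi_1\}$. Since $\xi_3$ appears in $\PP^\ell(\xi_2)$ only for $\ell\in\{q^2,q^2+1\}$, the $\xi_3$-degree of $\PP^i(g)$ is at most $\lfloor i/q^2\rfloor = q-1$. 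By Corollary~\ref{com_st}, $\PP^k(\xi_3)=0$ for $0<k<q^3$ except $k=1$, so
$$\PP^i(\xi_a^q\xi_3) = \PP^i(\xi_a^q)\,\xi_3 + \PP^{i-1}(\xi_a^q)\,\xi_2^q.$$
Because $q\nmid i$, the fact that $\PP^i(f^q)=0$ unless $q\mid i$ (the lemma stated just before Lemma~\ref{steenrod on linear forms}) forces $\PP^i(\xi_a^q)=0$, annihilating the $\xi_3$-term; the remaining summand lies in $R_2$. Hence $u_2^{q-1}\PP^i(u_2 d)$ has $\xi_3$-degree at most $q-1$.

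For the summation I induct on $i$ in the range $(q-1)q^2\le i<q^3$ with $q\nmid i$. Terms with $0<j<q$ vanish by Lemma~\ref{u2_st}(a). For $j\notin\{q^2,q^2+q,q^2+2q\}$, Lemma~\ref{u2_st}(b) gives $\PP^j(u_2)=u_2 h_j$ with $h_j\in R_2$; writing $i-j=\ell'q^2+r'$ with $r'<q^2$, the contribution equals $h_j\cdot u_2^q\PP^{i-j}(d)$, which by Lemma~\ref{u2d_st}(b) has $\xi_3$-degree at most $\ell'+1$, hence at most $q-1$ when $\ell'\le q-2$. If $\ell'=q-1$ and $q\mid j$, then $q\nmid i-j$ and the inductive hypothesis applies to $u_2^q\PP^{i-j}(d)$. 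The three exceptional indices $j\in\{q^2,q^2+q,q^2+2q\}$ all satisfy $q\mid j$, so the induction handles them. The remaining configuration is $q\le j\le r$ with $q\nmid j$ and $\PP^j(u_2)\ne 0$; expanding $u_2=\xi_0^q(\xi_2+c_{2,2})-\xi_1^{q+1}$ and applying Corollary~\ref{com_st} together with Lemma~\ref{c22_st} restricts $j$ to a short list (essentially $j\in\{q+1,2q\pm 1\}$ together with a few values coming from $\PP^{j-k}(c_{2,2})$), and a direct computation verifies the bound in each case.

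The main technical obstacle is precisely this final case check for $q\nmid j$ with $q\le j\le r$: one must verify by hand that the handful of $j$ for which $\PP^j(u_2)\ne 0$ and $q\nmid j$ produce terms $h_j\,u_2^q\PP^{i-j}(d)$ of $\xi_3$-degree at most $q-1$. Everything else reduces routinely to the preceding lemma and the induction on $i$.
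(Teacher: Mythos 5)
Your decomposition is not the paper's. The paper applies the Cartan identity to $u_2^q d_{j,2}$, yielding
$u_2^q\PP^i(d_{j,2})=\PP^i(u_2^qd_{j,2})-\sum_{k>0}\PP^k(u_2)^q\PP^{i-qk}(d_{j,2})$,
so that every error term involves $\PP^k(u_2)$ with $0<k<q^2$ (since $qk<i<q^3$). In that range $\PP^k(u_2)/u_2\in R_2$ always, so the troublesome indices simply never appear. You instead expand $u_2 d$ and multiply by $u_2^{q-1}$, which produces $\PP^j(u_2)$ for $j$ ranging up to $i$, and this forces you to confront $j\in\{q^2,q^2+q,q^2+2q\}$.

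Your handling of those three exceptional $j$ is where the argument breaks. You assert that because $q\mid j$ "the induction handles them," but this is not so: for $j\geq q^2$ we have $i-j<(q-1)q^2$, so $i-j$ is not of the form $(q-1)q^2+r'$ with $r'<q^2$ and the inductive hypothesis simply does not apply. What is available is Lemma~\ref{u2d_st}(b), which bounds the $\xi_3$-degree of $u_2^{q-1}\PP^{i-j}(d)$ by $q-1$; but the extra factor $\PP^j(u_2)$ is not of the form $u_2 h_j$ with $h_j\in R_2$ for these $j$ — it equals $u_2 d_{1,2}$, $u_2 d_{2,2}$, or a further expression carrying one power of $\xi_3$ — so the contribution $u_2^{q-1}\PP^j(u_2)\PP^{i-j}(d)$ is $(u_2\,d_{k,2})\cdot(u_2^{q-1}\PP^{i-j}(d))$, whose naive $\xi_3$-degree bound is $1+(q-1)=q$, not $q-1$. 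Closing this would require showing the relevant $\xi_3^{q-1}$-coefficient vanishes, which you do not do. Separately, for the non-exceptional $j$ with $q\nmid j$ and $\ell'=q-1$ you defer to "a direct computation verifies the bound in each case," which is a placeholder rather than a proof. The paper's choice of Cartan expansion is what avoids both of these complications, and it is what makes the argument go through cleanly; the vanishing of the top $\xi_3$-coefficient in the leading term $\PP^i(u_2^qd_{j,2})$ is then read off from the explicit coefficient $(u_1^q d_{j-1,1})^q$ and the hypothesis $q\nmid r$.

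Your first-term analysis is correct and morally the same as the paper's — you use $\PP^i(\xi_a^q)=0$ for $q\nmid i$, the paper uses $\PP^r((u_1^qd_{j-1,1})^q)=0$ for $q\nmid r$ — but the error-term treatment needs to be redone along the paper's lines.
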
 
\begin{proof} From Lemma \ref{u2d_st},
we know that $u_2^q\PP^i(d_{j,2})\in R_4$ with $\xi_4$-degree at most $q$.
We will show that the $\xi_4$-degree at most $q-1$.
Using the Cartan identity
$$u_2^q\PP^i(d_{j,2})=\PP^i(u_2^qd_{j,2})-\sum_{k>0}\PP^k(u_2)^q\PP^{i-qk}(d_{j,2}).$$
Since $u_2^qd_{j,2}\in R_4$ with $\xi_4$-degree $1$, we see that $\PP^i(u_2^qd_{j,2})\in R_4$
with $\xi_4$-degree at most $q$. However, by Lemma \ref{u2dlem}, the coefficient of $\xi_4$ in $u_2^qd_{j,2}$
is $u_2^{q-1}(u_1d_{j-1,1})^q$, which means that the coefficient of 
$\xi_4\xi_3^{q-1}$ in  
 $u_2^qd_{j,2}$ is $(u_{1}^q d_{j-1,1})^q$. 
 Therefore the coefficient of $\xi_{4}^q$ in $\PP^i(u_2^qd_{j,2})$
 is $\PP^r((u_{1}^q d_{j-1,1})^q)$. 
 Since $r\not\equiv_{(q)} 0$, $\PP^r((u_{1}^q d_{j-1,1})^q)=0$
 and the $\xi_{4}$-degree of $\PP^i(u_2^qd_{j,2})$ is at most $q-1$. 
 Since $i<q^3$ and $qk<i$, we have $k<q^{2}$.
 Thus $\PP^k(u_2)/u_2\in R_{3}$ and $$\PP^k(u_2)^q\PP^{i-qk}(d_{j,2})=(\PP^k(u_2)/u_2)^q u_2^q \PP^{i-qk}(d_{j,2}).$$
 Write $r=r_0+qr'$ with $0<r_0<q$. If $r'=0$, then $i-kq<(q-1)q^{2}$ and 
 $u_2^q \PP^{i-qk}(d_{j,2})$ has $\xi_{4}$-degree at most $q-1$.
 If $r'>0$, then the result follows by induction on $r'$.
\end{proof}

\begin{lem} \label{dg02lem} In $S_2^{\orthp{4}{q}}$, $\xi_4=(\xi_3+c_{3,2})d_{1,2}-\xi_2d_{2,2}-c_{4,2}$ with $c_{4,2}\in R_3$ and
 $$-c_{4,2}\equiv_{\langle \xi_1^{q+2}\rangle} \xi_1^2\xi_2^{q-2}\xi_3^{q-1}+\xi_1^q\xi_2^{q^2-2q}\xi_3.$$
 \end{lem}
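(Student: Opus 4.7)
The strategy is to clear $u_2$ from the identity, reducing everything to an equation inside $R_2$. Setting $A_1:=u_2d_{1,2}-\xi_0^q\xi_3$ and $A_2:=u_2d_{2,2}-\xi_1^q\xi_3$, both of which lie in $R_2$ by Lemma \ref{u2dlem}, I multiply the target identity by $u_2$ and use $u_2=\xi_0^q(\xi_2+c_{2,2})-\xi_1^{q+1}$ from Lemma \ref{u2lem}. The two $\xi_3$-contributions cancel exactly, leaving
\[
u_2\,c_{3,2}=(\xi_2+c_{2,2})A_1-\xi_1 A_2\in R_2,
\]
where $c_{3,2}:=(\xi_2+c_{2,2})d_{1,2}-\xi_1d_{2,2}-\xi_3\in S_m^{G_m}$ is the element we must promote into $R_2$.

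To show $c_{3,2}\in R_2$, I would apply Lemma \ref{u2lem} in the form $\ker(\Phi_{2,1})=u_2R_2$; it then suffices to check $\Phi_{2,1}\bigl((\xi_2+c_{2,2})A_1-\xi_1 A_2\bigr)=0$. Since $\Phi_{2,1}=\pi\circ\Phi_{2,2}$ where $\pi:S_m\to S_1$ is the projection killing $y_2$ and $x_2$, and because $y_2$ and $x_2$ each appear as factors of $N(y_2)$ and $N(x_2)$, we have $\pi(u_2)=0$. Applying $\pi$ to the identities $u_2d_{1,2}=\xi_0^q\xi_3+A_1$ and $u_2d_{2,2}=\xi_1^q\xi_3+A_2$ forces $\pi(A_1)=-\pi(\xi_0)^q\pi(\xi_3)$ and $\pi(A_2)=-\pi(\xi_1)^q\pi(\xi_3)$, whence
\[
\pi\bigl((\xi_2+c_{2,2})A_1-\xi_1 A_2\bigr)=-\pi(\xi_3)\bigl[(\pi(\xi_2)+\pi(c_{2,2}))\pi(\xi_0)^q-\pi(\xi_1)^{q+1}\bigr]=-\pi(\xi_3)\,\pi(u_2)=0.
\]
Thus $u_2$ divides $(\xi_2+c_{2,2})A_1-\xi_1 A_2$ in $R_2$ and $c_{3,2}\in R_2$.

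For the congruence modulo $\xi_0^{q+2}$, I substitute the explicit reductions $c_{2,2}\equiv\xi_0\xi_1^{q-1}$ from Lemma \ref{u2lem} and $A_1\equiv-\xi_1\xi_2^q+\xi_0\xi_1^q\xi_2^{q-1}$, $A_2\equiv-\xi_2^{q+1}-\xi_0^q\xi_1^{q^2-q}\xi_2$ from Lemma \ref{u2dlem} (valid mod $\xi_0^{q^2}$ and hence mod $\xi_0^{q+2}$ since $q\geq3$) into $(\xi_2+c_{2,2})A_1-\xi_1A_2$. After the $\pm\xi_1\xi_2^{q+1}$ and $\pm\xi_0\xi_1^q\xi_2^q$ terms cancel, one is left with
\[
u_2\,c_{3,2}\equiv\xi_0^2\xi_1^{2q-1}\xi_2^{q-1}+\xi_0^q\xi_1^{q^2-q+1}\xi_2\pmod{\xi_0^{q+2}}.
\]
Multiplying out $u_2\equiv-\xi_1^{q+1}+\xi_0^q\xi_2+\xi_0^{q+1}\xi_1^{q-1}$ against the candidate $-\xi_0^2\xi_1^{q-2}\xi_2^{q-1}-\xi_0^q\xi_1^{q^2-2q}\xi_2$ recovers exactly this right-hand side modulo $\xi_0^{q+2}$. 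Since $u_2\bmod\xi_0=-\xi_1^{q+1}$ gives $\gcd(u_2,\xi_0^{q+2})=1$ in the UFD $R_2$, multiplication by $u_2$ is injective modulo $\xi_0^{q+2}$, so the quotient is uniquely determined and negating gives the claimed formula for $-c_{3,2}$. The main obstacle is placing $c_{3,2}$ in $R_2$ rather than merely $\F(R_2)$; this is handled cleanly by the $\pi$-vanishing argument, after which the modular arithmetic is routine substitution.
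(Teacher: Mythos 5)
Your proposal is correct and follows essentially the same route as the paper: multiply through by $u_2$, observe the $\xi_3$-contributions cancel using Lemma~\ref{u2lem}, place the result in $R_2$ via $\ker(\Phi_{2,1})=u_2R_2$, and then compute the quotient mod $\xi_0^{q+2}$ by substituting the congruences from Lemma~\ref{u2dlem}. Your version is a bit more explicit in two spots where the paper is terse --- the $\pi$-vanishing argument for why $u_2$ divides the $R_2$-element inside $R_2$, and the observation that $u_2$ is a non-zero-divisor modulo $\xi_0^{q+2}$ so the quotient is uniquely determined --- but both are filling in exactly the steps the paper implicitly takes.
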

 \begin{proof} Using Lemma \ref{u2lem}, $u_2=\xi_1^q(\xi_3+c_{3,2})-\xi_2^{q+1}$ with $c_{3,2}\in R_2$.
 Since $u_2d_{1,2}-\xi_1^q\xi_4$ and $u_2d_{2,2}-\xi_2^q\xi_4$ are elements of $R_3$
 (see Lemma~\ref{u2dlem}),
 $$u_2\xi_4-(\xi_3+c_{3,2})u_2d_{1,2}+\xi_2u_2d_{2,2}\in R_3.$$
 Since this expression is divisible by $u_2$ in $S_2$ and lies in $R_3$, it is divisible by $u_2$ in $R_3$ (see Lemma~\ref{u2_div_lem}). 
 Defining $-c_{4,2}:= (u_2\xi_4-(\xi_3+c_{3,2})u_2d_{1,2}+\xi_2u_2d_{2,2})/u_2$ gives 
 $$\xi_4=(\xi_3+c_{3,2})d_{1,2}-\xi_2d_{2,2}-c_{4,2}\ .$$ with $c_{4,2}\in R_3$.
 Using Lemma~\ref{u2dlem}
 \begin{eqnarray*}
 (\xi_3+c_{3,2})u_2d_{1,2}-\xi_2u_2d_{2,2}&\equiv_{\langle \xi_1^{q^2}\rangle}&\xi_4((\xi_3+c_{3,2})\xi_1^q-\xi_2^{q+1})-\xi_3(\xi_2\xi_3^q-\xi_1\xi_2^q\xi_3^{q-1})\cr
 &&+\xi_2(\xi_3^{q+1}+\xi_1^q\xi_2^{q^2-q}\xi_3)-c_{3,2}(\xi_2\xi_3^q-\xi_1\xi_2^q\xi_3^{q-1})\cr
 &=& u_2\xi_4+\xi_1\xi_2^q\xi_3^q+\xi_1^q\xi_2^{q^2-q+1}\xi_3\cr
 &&-c_{3,2}(\xi_2\xi_3^q-\xi_1\xi_2^q\xi_3^{q-1})
 \end{eqnarray*}
 Therefore, using Lemma~\ref{u2lem} to substitute for $c_{3,2}$,
  \begin{eqnarray*}
-u_2c_{4,2}&\equiv_{\langle \xi_1^{q^2}\rangle}&-\xi_1\xi_2^q\xi_3^q-\xi_1^q\xi_2^{q^2-q+1}\xi_3+c_{3,2}(\xi_2\xi_3^q-\xi_1\xi_2^q\xi_3^{q-1})\cr
&\equiv_{\langle \xi_1^{q+2}\rangle}&-\xi_1\xi_2^q\xi_3^q-\xi_1^q\xi_2^{q^2-q+1}\xi_3+\xi_1\xi_2^{q-1}(\xi_2\xi_3^q-\xi_1\xi_2^q\xi_3^{q-1})\cr
&=&-\xi_1^2\xi_2^{2q-1}\xi_3^{q-1}-\xi_1^q\xi_2^{q^2-q+1}\xi_3\cr
&=&-\xi_2^{q+1}(\xi_1^2\xi_2^{q-2}\xi_3^{q-1}+\xi_1^q\xi_2^{q^2-2q}\xi_3).
\end{eqnarray*}
 Thus $-c_{4,2}\equiv_{\langle \xi_1^{q+2}\rangle} \xi_1^2\xi_2^{q-2}\xi_3^{q-1}+\xi_1^q\xi_2^{q^2-2q}\xi_3.$
 \end{proof}
 
Applying $\PP^1$ to $\xi_4=(\xi_3+c_{3,2})d_{1,2}-\xi_2d_{2,2}-c_{4,2}$ and using $\PP^1(c_{3,2})=\xi_2^q$ from Lemma \ref{c22_st} gives
 $$\xi_3^q=2\xi_2^q d_{1,2}-2\xi_1^q d_{2,2}-\PP^1(c_{4,2})+(\xi_3+c_{3,2})\PP^1(d_{1,2})-\xi_2\PP^1(d_{2,2}).$$
 Since $c_{4,2}\in R_3$, $\PP^1(c_{4,2})\in R_3$. Furthermore, using Lemma \ref{dg02lem},
\begin{eqnarray*}
    -\PP^1(c_{4,2})&\equiv_{\langle \xi_1^{q+1}\rangle}&
    \PP^1(\xi_1^2\xi_2^{q-2}\xi_3^{q-1}+\xi_1^q\xi_2^{q^2-2q}\xi_3)\\
    &\equiv_{\langle \xi_1^{q+1}\rangle}& 
    2\xi_1\xi_2^{q-1}\xi_3^{q-1}-\xi_1^2\xi_2^{2q-2}\xi_3^{q-2}+\xi_1^q\xi_2^{q^2-q}\ .
\end{eqnarray*}
 Using Lemmas~\ref{u2_div_lem}, \ref{u2_st}, \ref{u2dlem} and \ref{u2d_st}, $\PP^1(u_2d_{1,2})=u_2\PP^1(d_{1,2})$, $\PP^1(d_{1,2})\in R_3$ and
 $$u_2\PP^1(d_{1,2})\equiv_{\langle \xi_1^{q^2}\rangle}\xi_1^q\xi_3^q-2\xi_1^q\xi_3^q+\xi_2^{q+1}\xi_3^{q-1}-\xi_1\xi_2^{2q}\xi_3^{q-2}
 =\xi_2^{q+1}\xi_3^{q-1}-\xi_1\xi_2^{2q}\xi_3^{q-2}-\xi_1^q\xi_3^q.$$
 Again using Lemmas~\ref{u2_div_lem}, \ref{u2_st}, \ref{u2dlem} and \ref{u2d_st}, $\PP^1(u_2d_{2,2})=u_2\PP^1(d_{2,2})$, $\PP^1(d_{2,2})\in R_3$ and
 $u_2\PP^1(d_{2,2})\equiv_{\langle \xi_1^{q^2}\rangle}\xi_2^q\xi_3^q-\xi_2^q\xi_3^q-\xi_1^q\xi_2^{q^2}=-\xi_1^q\xi_2^{q^2}$. 
 Since $u_2\equiv_{\langle \xi_1^{q+1}\rangle} -\xi_2^{q+1}+\xi_1^q\xi_3$, we have
 $$\PP^1(d_{1,2})\equiv_{\langle \xi_1^{q+1}\rangle}  - \xi_3^{q-1}+\xi_1\xi_2^{q-1}\xi_3^{q-2}$$ and $\PP^1(d_{2,2})\equiv_{\langle \xi_1^{q+1}\rangle} \xi_1^q\xi_2^{q^2-q-1}$.
 Since $\PP^1(d_{1,2})$ and $\PP^1(d_{2,2})$ lie in $R_3$, we have
 $\xi_2^q d_{1,2}-\xi_1^q d_{2,2}\in R_3$.
 Using the congruences we have
  \begin{eqnarray*}
 \xi_3^q&\equiv_{\langle \xi_1^{q+1}\rangle}& 2\xi_2^q d_{1,2}-2\xi_1^q d_{2,2}+
 (2\xi_1\xi_2^{q-1}\xi_3^{q-1}-\xi_1^2\xi_2^{2q-2}\xi_3^{q-2}+\xi_1^q\xi_2^{q^2-q})\cr
  &&-(\xi_3+c_{3,2})(\xi_3^{q-1}-\xi_1\xi_2^{q-1}\xi_3^{q-2})-\xi_1^q\xi_2^{q^2-q}\cr
 &\equiv_{\langle \xi_1^{q+1}\rangle}&2\xi_2^q d_{1,2}-2\xi_1^q d_{2,2}
 +3\xi_1\xi_2^{q-1}\xi_3^{q-1}-\xi_3^q
 -\xi_1^2\xi_2^{2q-2}\xi_3^{q-2}\cr
 &&-c_{3,2}(\xi_3^{q-1}-\xi_1\xi_2^{q-1}\xi_3^{q-2}) \cr
 &\equiv_{\langle \xi_1^{q+1}\rangle}&2\xi_2^q d_{1,2}-2\xi_1^q d_{2,2}-\xi_3^q+3\xi_1\xi_2^{q-1}\xi_3^{q-1}-\xi_1^2\xi_2^{2q-2}\xi_3^{q-2} \cr
 &&-\xi_1\xi_2^{q-1}(\xi_3^{q-1}-\xi_1\xi_2^{q-1}\xi_3^{q-2})\cr
 &\equiv_{\langle \xi_1^{q+1}\rangle}&2\xi_2^q d_{1,2}-2\xi_1^q d_{2,2}-\xi_3^q +2\xi_1\xi_2^{q-1}\xi_3^{q-1}.
 \end{eqnarray*}
 Therefore
 $$\xi_3^q\equiv_{\langle \xi_1^{q+1}\rangle} \xi_2^q d_{1,2}-\xi_1^q d_{2,2}+\xi_1\xi_2^{q-1}\xi_3^{q-1}.$$
 Recall that $- \xi_2^q d_{1,2}+\xi_1^q d_{2,2}\in R_3$ and define
 $r_{2,1}\in R_3$ by 
 \begin{equation} \label{dagger_12}
 \xi_1^{q+1}r_{2,1}=\xi_3^q- \xi_2^q d_{1,2}+\xi_1^q d_{2,2}-\xi_1\xi_2^{q-1}\xi_3^{q-1}.
 \end{equation}
Then 
\begin{equation} \label{dag}
 \xi_3^q=\xi_2^q d_{1,2}-\xi_1^q d_{2,2}-\gamma_{3,2}^{(1)}
 \end{equation}
with
$\gamma_{3,2}^{(1)}=-\xi_1^{q+1}r_{2,1}-\xi_1\xi_2^{q-1}\xi_3^{q-1}\in R_3$
and $\nu(\gamma_{3,2}^{(1)})>q+1$. 

We have completed the proof of Lemma~\ref{oplus_tech-lem} for 
the base case $m=2$.
Parts (a), (c) and (d) follow from Lemmas \ref{u2lem} and \ref{u2dlem}.
Part (b) follows from Lemmas \ref{u2_st}, \ref{u2d_st} and
\ref{special_u2d_st}. Part (e) follows from Lemma~\ref{dg02lem}
and part (f) is Equation~\ref{dag}.
Theorem \ref{oplus_thm_v2} for $m=2$, 
gives the alternative computation for the invariants of $\orthp{4}{q}$.

\begin{remark}
  Notice that in the process of deriving Equation~(\ref{dag}),
  the expression $-\xi_3^q$ arises on the right hand side.  If the coefficient 
  had been +1 rather than -1 then this process would not have provided a
  formula for $\xi_3^q$.   It is for this reason that naively applying a 
  Steenrod operation to the formula for $\xi_{n-i}^{q^i}$  
  is not guaranteed to give a formula for $\xi_{n-1-i}^{q^{i+1}}$.
\end{remark}

\section{Proof of Lemma \ref{oplus_tech-lem}}\label{proof_of_tech-lem}

The proof is by induction on $m$. 
The base case, $m=2$, is given in Section~\ref{base_case}. 
For the induction step, we will assume the lemma is true for $S_\ell^{G_\ell}$ for $2\leq\ell<m$ and, in the proof of a given part, we will assume the
earlier parts are true for $S_m^{G_m}$.

\begin{lem} Part (e) of Lemma~\ref{oplus_tech-lem} follows 
from parts (a) and (c).
\end{lem}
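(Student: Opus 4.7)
The plan is to introduce the candidate
$$\theta := \xi_{n-1} - \sum_{i=1}^m (-1)^{i+1}(\xi_{n-1-i} + c_{n-1-i,m})\, d_{i,m} \in S_m^{G_m},$$
and to prove $\theta \in R_{n-2}$; the required $c_{n-1,m}$ is then simply $-\theta$. Multiplying by $u_m$ and applying the identity $u_m d_{i,m} = (u_{m-1} d_{i-1,m-1})^q \xi_{n-1} + \rho_{i,m}$ with $\rho_{i,m}\in R_{n-2}$ (a restatement of Part (a)) yields
\begin{equation*}
u_m \theta = u_m\xi_{n-1} - \xi_{n-1}\sum_{i=1}^m (-1)^{i+1}(\xi_{n-1-i}+c_{n-1-i,m})(u_{m-1}d_{i-1,m-1})^q - \sum_{i=1}^m (-1)^{i+1}(\xi_{n-1-i}+c_{n-1-i,m})\rho_{i,m}.
\end{equation*}

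Reindexing the middle sum by $j=i-1$, using $d_{0,m-1}=1$ and $(-1)^{j+2}=(-1)^j$, rewrites it as $\sum_{j=0}^{m-1}(-1)^j (\xi_{n-2-j}+c_{n-2-j,m})(u_{m-1}d_{j,m-1})^q$, which is exactly the expression for $u_m$ supplied by Part (a). The two $\xi_{n-1}$ contributions therefore cancel, leaving
\begin{equation*}
u_m \theta = -\sum_{i=1}^m (-1)^{i+1}(\xi_{n-1-i}+c_{n-1-i,m})\,\rho_{i,m},
\end{equation*}
which lies in $R_{n-2}$ because $\xi_{n-1-i}$ (for $i\geq 1$), $c_{n-1-i,m}$, and $\rho_{i,m}$ all do.

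To upgrade $u_m\theta\in R_{n-2}$ to $\theta\in R_{n-2}$, I invoke Part (a)'s identification $\ker(\Phi_{n-2,m-1})=u_m R_{n-2}$. Under the identification $R_{n-2}\cong\field_q[\xi_0,\dots,\xi_{n-2}]\subset S_m$, the map $\Phi_{n-2,m-1}$ is realised by the substitution $y_m=x_m=0$ followed by the canonical identification $S_m/(y_m,x_m)\cong S_{m-1}$, so $u_m$ vanishes under this substitution, and hence so does $u_m\theta$. This forces $u_m\theta\in u_m R_{n-2}$: write $u_m\theta=u_m\beta$ with $\beta\in R_{n-2}$, and cancel $u_m$ in the domain $S_m$ to obtain $\theta=\beta\in R_{n-2}$. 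Setting $c_{n-1,m}:=-\theta$ produces the equation of Part (d).

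The main obstacle is the reindexing step: one must track the signs carefully so that the transformed middle sum matches the Part (a) formula for $u_m$ exactly (in particular the convention $d_{0,m-1}=1$ must supply the $j=0$ term that corresponds to $(\xi_{n-2}+c_{n-2,m})u_{m-1}^q$). Once that cancellation is secured, the descent from $u_m\theta\in R_{n-2}$ to $\theta\in R_{n-2}$ is an immediate consequence of the kernel description already in hand.
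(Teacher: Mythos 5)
Your proof is correct and follows essentially the same route as the paper: use the two facts from Part~(a), multiply by $u_m$, reindex so the $\xi_{n-1}$ contributions cancel against the Part~(a) formula for $u_m$, conclude $u_m\theta\in R_{n-2}$, and then divide using $\ker(\Phi_{n-2,m-1})=u_m R_{n-2}$. The only cosmetic difference is that you realise $\Phi_{n-2,m-1}$ via the substitution $y_m=x_m=0$, whereas the paper (in Lemma~\ref{phi_bar_lemma}) passes to $S_m/x_1S_m$; both are valid realisations and the final cancellation argument is identical.
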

\begin{proof} From (c) we have
$$u_m=(\xi_{n-1}+c_{n-1,m})u_{m-1}^q+\sum_{i=1}^{m-1}(-1)^i(\xi_{n-1-i}+c_{n-1-i,m})(u_{m-1}d_{i,m-1})^q$$
with $c_{j,m}\in R_{j-1}$ and $u_m d_{i,m}-(u_{m-1}d_{i-1,m-1})^q\xi_{n}\in R_{n-1}$.
Therefore
$$F:=\sum_{i=1}^{m}(-1)^{i+1}(\xi_{n-i}+c_{n-i,m})(u_md_{i,m}-(u_{m-1}d_{i-1,m-1})^q\xi_{n})\in R_{n-1}.$$
The coefficient of $\xi_{n}$ in the expression defining $F$ is
$$\sum_{i=1}^{m}(-1)^{i}(\xi_{n-i}+c_{n-i,m})(u_{m-1}d_{i-1,m-1})^q
=-u_m.$$ 
Thus  $$F=-u_m\xi_n+\left(\sum_{i=1}^{m}(-1)^{i+1}(\xi_{n-i}+c_{n-i,m})u_{m}d_{i,m}\right)$$
and $$\xi_n=-F/u_m+\left(\sum_{i=1}^{m}(-1)^{i+1}(\xi_{n-i}+c_{n-i,m})d_{i,m}\right).
$$
Since $F$ is divisible by $u_m$ in $S_m^{G_m}$ and $F\in R_{n-1}$,
we have $F\in \ker(\Phi_{n-1,m-1})$ (see Lemma~\ref{phi_bar_lemma} and 
Remark~\ref{um_div_rem}).
From part (a), $\ker(\Phi_{n-1,m-1})=u_mR_{n-1}$.
Therefore $F$ is divisible by $u_m$ in $R_{n-1}$.
Define $c_{n,m}:=F/u_m\in R_{n-1}$.
This gives
$$\xi_n=-c_{n,m}+\left(\sum_{i=1}^{m}(-1)^{i+1}(\xi_{n-i}+c_{n-i,m})d_{i,m}\right)$$
as required.
\end {proof}

\subsection{Proof of Parts (a), (b) and (c)}
The proof is by induction on $m$.
The base cases $m=2$ is given in Section \ref{base_case}.
By the induction hypothesis $S_{m-1}^{G_{m-1}}$ is generated by $\{\xi_1,\ldots,\xi_{n-3},d_{1,m-1},\ldots, d_{m-1,m-1}\}$.
The Hook subgroup for $G_m$ is a normal subgroup of the pointwise stabiliser $(G_m)_{x_m}$.
If we identify $G_{m-1}$ with the pointwise stabiliser of $\{x_m,y_m\}$ in $G_m$, then $(G_m)_{x_m}=G_{m-1}H$.
Arguing as in the proof of Theorem \ref{sylow_thm} and using Theorem \ref{orHcal}, we see that $S_m^{(G_m)_{x_m}}$ is generated by
$$\{N(y_m),x_m,\xi_1,\ldots,\xi_{n-2},\psi_{[m,1]}(\sigma(d_{1,m-1})),\ldots \psi_{[m,1]}(\sigma(d_{m-1,m-1}))\}.$$
In this context, the compliance relation (Lemma \ref{ortat:3}) allows us to rewrite $\xi_{n-1}$ in terms of the generators of $S_m^{(G_m)_{x_m}}$.
For degree reasons, the resulting expression is linear in the $\psi_{[m,1]}(\sigma(d_{j,m-1}))$, giving
$$\xi_{n-1}=x_mN(y_m)+\lambda_0+\sum_{j=1}^{m-1}\lambda_j \psi_{[m,1]}(\sigma(d_{j,m-1}))$$
with $\lambda_j\in R_{n-2}[x_m]$.
Since $u_m=x_mN(y_m)\psi_{[m,1]}(\sigma(u_{m-1}))$, we have
$$u_m=\psi_{[m,1]}(\sigma(u_{m-1}))\left(\xi_{n-1}-\lambda_0-\sum_{j=1}^{m-1}\lambda_j \psi_{[m,1]}(\sigma(d_{j,m-1}))\right).$$
We know that $\psi_{[m,1]}$ and $\sigma$ are algebra maps. Furthermore, $\psi_{[m,1]}(\sigma(f))=\psi_{[m,1]}(f)$ for $f\in R_{n-2}$.
By induction $u_{m-1}$ and $u_{m-1}d_{j,m-1}$lie in $R_{n-2}$. Therefore
$$u_m=(\xi_{n-1}-\lambda_0)\psi_{[m,1]}(u_{m-1})-\sum_{j=1}^{m-1}\lambda_j \psi_{[m,1]}(u_{m-1}d_{j,m-1}).$$
Write $\lambda_j=\overline{\lambda_j}+x_m\lambda_j'$ with $\overline{\lambda_j}\in R_{n-2}$ and $\lambda_j'\in R_{n-2}[x_m]$
and observe that $\psi_{[m,1]}(\lambda_j)=\psi_{[m,1]}(\overline{\lambda_j})$.
Using Lemma \ref{phi_bar_lemma}, $\psi_{[m,1]}(\psi_{[m,1]}(f))=\psi_{[m,1]}(f)^q$ for $f\in R_{n-2}$.
Applying $\psi_{[m,1]}$ to the above expression and using the fact that $\psi_{[m,1]}(u_m)=0$ gives
$$0=\psi_{[m,1]}(\xi_{n-1}-\overline{\lambda_0})\psi_{[m,1]}(u_{m-1})^q-\sum_{j=1}^{m-1}\psi_{[m,1]}(\overline{\lambda_j}) \psi_{[m,1]}(u_{m-1}d_{j,m-1})^q.$$
Therefore
\begin{equation}\label{ut_eqn}
    \widetilde{u}_m:=(\xi_{n-1}-\overline{\lambda_0})u_{m-1}^q-\sum_{j=1}^{m-1}\overline{\lambda_j} (u_{m-1}d_{j,m-1})^q\in R_{n-1}\cap\ker(\psi_{[m,1]}).
\end{equation}
Using Lemma \ref{phi_bar_lemma}, $u_m$ divides every element 
of $R_{n-1}\cap\ker(\psi_{[m,1]})$. Therefore $u_m$ divides $\widetilde{u}_m$.
Since $\wdeg(u_m)=\wdeg(\widetilde{u}_m)$, we see that  $\widetilde{u}_m$
is a scalar multiple of $u_m$.
As an element of $R_{n-1}$, $\widetilde{u}_m$ is linear in $\xi_{n-1}$
with coefficient $u_{m-1}^q\not=0$. Therefore $\widetilde{u}_m$
is a non-zero scalar multiple of $u_m$.
From this we conclude that $\ker(\Phi_{n-1,m-1})=u_mR_{n-1}$.
We claim that that $\widetilde{u}_m=u_m$. To see this, first observe that
$\lt(u_m)=\prod_{i=1}^m y_i^{q^{m+i-2}} x_i^{q^{m-i}}= : \beta$ using either the lex or grevlex order on $S_m$. We will show that $\beta$ appears in $\widetilde{u}_m$ with coefficient one. The factor $y_m^{q^{n-2}}x_m$
can only come from a factor of $\xi_{n-1}$. Therefore, using that the expression for $\widetilde{u}_m$ as an element of $R_{n-1}$, we see that $\beta$ can only appear as a term in $\xi_{n-1}u_{m-1}^q$. The factor $y_{m-1}^{n-3}x_{m-1}^q$ necessarily comes 
from a factor of $\xi_{n-3}^q$ appearing in a term in $u_{m-1}^q$.
Iterating this process and using induction, we see that
$\xi_{n-1}\xi_{n-3}^q\cdots\xi_{1}^{q^{m-1}}$ appears with coefficient one in $\widetilde{u}_m\in R_{n-1}$ and $\beta$ appears with coefficient one in $\widetilde{u}_m\in S_m$. Therefore $\widetilde{u}_m=u_m$.
Applying $\PP^{e(i,m)}$ to this expression shows that
$u_md_{i,m}=\PP^{e(i,m)}(u_m)\in R_{n}$ and $u_md_{i,m}-\xi_{n}\PP^{e(i,m)-q^{n-2}}(u_{m-1}^q)\in R_{n-1}$.
Since $e(i,m)-q^{n-2}=qe(i-1,m-1)$, we have $u_md_{i,m}-\xi_{n}(u_{m-1}d_{i-1,m-1})^q\in R_{n-1}$.

As an element of $S_m$, $u_m$ is a product of pairwise relatively prime linear forms. Therefore $u_m$ divides $\PP^i(u_m)$ in $S_m$.
If $\PP^i(u_m)\in R_{n-1}$, then 
$$\PP^i(u_m)\in  \ker(\Phi_{n-1,m-1})=u_mR_{n-1}$$ and $u_m$ divides 
$\PP^i(u_m)$ in $R_{n-1}$(see Remark~\ref{um_div_rem}).
For $i<q^{n-2}$, $\PP^i(u_m)\in R_{n-1}$ and, therefore, $\PP^i(u_m)/u_m\in R_{n-1}$ .
By induction, $\PP^i(u_{m-1})=0$ for $0<i<q^{m-2}$. Therefore, for $0<i<q^{m-1}$, $\PP^i(u_{m-1}^q)=0$ and $\PP^i(u_m)\in R_{n-2}$.
Since $u_m$ is linear in $\xi_{n-1}$ and divides $\PP^i(u_m)\in R_{n-2}$, we conclude that $\PP^i(u_m)=0$ for $0<i<q^{m-1}$.

For $i<q^{n-1}$, write $i=\ell q^{n-2}+r$ with $\ell<q$ and $r<q^{n-2}$. 
We will prove by induction on $i$ that $u_m^{\ell+1}\PP^i(d_{j,m})\in R_{n}$ with $\xi_{n}$-degree at most $\ell+1$.
For $i=0$, we have $\ell=0$ and $u_m\PP^0(d_{j,m})=u_md_{j,m}\in R_{n}$ with $\xi_{n}$-degree $1$.
For $0<i<q^{m-1}$, we have $u_m\PP^i(d_{j,m})=\PP^i(u_m d_{j,m})\in R_{n-1}$, giving $\PP^i(d_{j,m})\in R_{n-1}$ (see Remark~\ref{um_div_rem}).
For $i\geq q^{m-1}$, using the Cartan identity gives
$$u_m\PP^i(d_{j,m})=\PP^i(u_m d_{j,m})-\sum_{k>0}\PP^k(u_m)\PP^{i-k}(d_{j,m}).$$
Since $u_m d_{j,m}\in R_{n}$ with $\xi_{n}$-degree $1$, we see that $\PP^i(u_m d_{j,m})\in R_{n}$ with $\xi_{n}$-degree at most $\ell+1$.
For $k>0$, write $i-k=\ell'q^{n-2}+r'$ with $\ell'\leq \ell$ and $r'<q^{n-2}$. Since $i-k<i$, the induction hypothesis gives
$u_m^{\ell'+1}\PP^{i-k}(d_{j,m})\in R_{n}$ with $\xi_{n}$-degree at most $\ell'+1$.
If $k<q^{n-2}$, then $\PP^k(u_m)/u_m\in R_{n-1}$ and 
$$u_m^{\ell}\PP^k(u_m)\PP^{i-k}(d_{j,m}) =(\PP^k(u_m)/u_m)u_m^{\ell+1}\PP^{i-k}(d_{j,m})\in R_{n}$$
with $\xi_{n}$-degree at most $\ell+1$, as required. Suppose $q^{n-2}\leq k$. Then $\ell'<\ell$ and 
$u_m^{\ell}\PP^{i-k}(d_{j,m})\in R_{n}$ with $\xi_{n}$-degree at most $\ell$.
Since $\PP^k(u_m)\in R_{n}$ with $\xi_{n}$-degree at most $1$,
$u_m^{\ell}\PP^k(u_m)\PP^{i-k}(d_{j,m}) \in R_{n}$ with $\xi_{n}$-degree at most $\ell+1$, as required.

Suppose $i=(q-1)q^{n-2}+r$ with $r<q^{n-2}$ and $i\not\equiv_{(q)}0$. 
We know that $u_m^q\PP^i(d_{j,m})\in R_{n}$ with $\xi_{n}$-degree at most $q$.
We will show that the $\xi_{n}$-degree at most $q-1$.
Using the Cartan identity
$$u_m^q\PP^i(d_{j,m})=\PP^i(u_m^qd_{j,m})-\sum_{k>0}\PP^k(u_m)^q\PP^{i-qk}(d_{j,m}).$$
Since $u_m^qd_{j,m}\in R_{n}$ with $\xi_{n}$-degree $1$, we see that $\PP^i(u_m^qd_{j,m})\in R_{n}$
with $\xi_{n}$-degree at most $q$. However, the coefficient of $\xi_{n}$ in $u_m^qd_{j,m}$
is given by $u_m^{q-1}(u_{m-1}d_{j-1,m-1})^q$, which means that the coefficient of $\xi_{n}\xi_{n-1}^{q-1}$ in  
 $u_m^qd_{j,m}$ is $(u_{m-1}^q d_{j-1,m-1})^q$. 
 Therefore we have that the coefficient of $\xi_{n}^q$ in $\PP^i(u_m^qd_{j,m})$
 is given by $\PP^r((u_{m-1}^q d_{j-1,m-1})^q)$. Since $r\not\equiv_{(q)} 0$, $\PP^r((u_{m-1}^q d_{j-1,m-1})^q)=0$
 and the $\xi_{n}$-degree of $\PP^i(u_m^qd_{j,m})$ is at most $q-1$. Since $i<q^{n-1}$ and $qk<i$, we have $k<q^{n-2}$.
 Thus $\PP^k(u_m)/u_m\in R_{n-1}$ and $$\PP^k(u_m)^q\PP^{i-qk}(d_{j,m})=(\PP^k(u_m)/u_m)^q u_m^q \PP^{i-qk}(d_{j,m}).$$
 Write $r=r_0+qr'$ with $0<r_0<q$. If $r'=0$, then $i-kq<(q-1)q^{n-2}$ and $u_m^q \PP^{i-qk}(d_{j,m})$ has $\xi_{n}$-degree at most $q-1$.
 If $r'>0$, then the result follows by induction on $r'$.

Using the induction hypothesis, in $S_{m-1}^{G_{m-1}}$, we have
 $$\xi_{n-2}=\left(\sum_{i=1}^{m-1}(-1)^{i+1}(\xi_{n-2-i}+c_{n-2-i,m-1})d_{i,m-1}\right)-c_{n-2,m-1}.$$
 Using Lemma \ref{sub_max}, 
 $$\PP^{q^{n-3}}(\xi_{n-2-i}d_{i,m-1})=\xi_{n-1-i}d_{i,m-1}^q+\xi_{n-2-i}^q\PP^{a_i}(d_{i,m-1})$$
and
 $$\PP^{q^{n-3}}(c_{n-2-i,m-1}d_{i,m-1})=\PP^{q^{n-3-i}}(c_{n-2-i,m-1})d_{i,m-1}^q+c_{n-2-i,m-1}^q\PP^{a_i}(d_{i,m-1})$$
 where $a_i=\deg(d_{i,m-1})-1$. Therefore, applying $\PP^{q^{n-3}}$ to the relation above gives
 \begin{eqnarray*}
 \xi_{n-1}&=&\left(\sum_{i=1}^{m-1}(-1)^{i+1}(\xi_{n-1-i}+\PP^{q^{n-3-i}}(c_{n-2-i,m-1}))d_{i,m-1}^q\right)\\
 &&+ \left(\sum_{i=1}^{m-1}(-1)^{i+1}(\xi_{n-2-i}+c_{n-2-i,m-1})^q \PP^{a_i}(d_{i,m-1})\right)-\PP^{q^{n-3}}(c_{n-2,m-1}).
 \end{eqnarray*}
Multiplying by $u_{m-1}^q$ gives
\begin{eqnarray*}
 u_{m-1}^q\xi_{n-1}&=&\left(\sum_{i=1}^{m-1}(-1)^{i+1}(\xi_{n-1-i}+\PP^{q^{n-3-i}}(c_{n-2-i,m-1}))(u_{m-1}d_{i,m-1})^q\right)\\
 &&+ \left(\sum_{i=1}^{m-1}(-1)^{i+1}(\xi_{n-2-i}+c_{n-2-i,m-1})^q u_{m-1}^q\PP^{a_i}(d_{i,m-1})\right)\\
 &&-u_{m-1}^q\PP^{q^{n-3}}(c_{n-2,m-1}).
 \end{eqnarray*}
By induction $u_{m-1}d_{i,m-1}\in R_{n-2}$ with $\xi_{n-2}$-degree $1$
and, since $a_i=q^{n-3}-q^{n-3-i}-1$, $u_{m-1}^q\PP^{a_i}(d_{i,m-1})\in R_{n-2}$ with $\xi_{n-2}$-degree at most $q-1$.
Therefore the right hand side of the relation is an element of $R_{n-2}$ and the relation lifts to give an element of
$\ker(\Phi_{n-1,m-1})=u_mR_{n-1}\subset S_m^{G_m}$. 
Comparing the coefficient of $\xi_{n-1}$ gives
\begin{eqnarray*}
 u_m&=&u_{m-1}^q\xi_{n-1}-\left(\sum_{i=1}^{m-1}(-1)^{i+1}(\xi_{n-1-i}+\PP^{q^{n-3-i}}(c_{n-2-i,m-1}))(u_{m-1}d_{i,m-1})^q\right)\\
 &&-\left(\sum_{i=1}^{m-1}(-1)^{i+1}(\xi_{n-2-i}+c_{n-2-i,m-1})^q u_{m-1}^q\PP^{a_i}(d_{i,m-1})\right)\\
 &&+u_{m-1}^q\PP^{q^{n-3}}(c_{n-2,m-1}).
 \end{eqnarray*}
Comparing this expression for $u_m$ with the expression coming from Equation~\ref{ut_eqn}
and using the fact that $u_{m-1}^q\PP^{a_i}(d_{i,m-1})$ 
has $\xi_{n-2}$-degree at most $q-1$ while 
$(u_{m-1}d_{i,m-1})^q$ has $\xi_{n-2}$-degree $q$, we conclude that
$$u_m=(\xi_{n-1}+c_{n-1,m})u_{m-1}^q+\sum_{i=1}^{m-1}(-1)^i(\xi_{n-1-i}+c_{n-1-i,m})(u_{m-1}d_{i,m-1})^q$$
with $$ c_{n-1,m}=\PP^{q^{n-3}}(c_{n-2,m-1})+\sum_{i=1}^{m-1}(-1)^{i}(\xi_{n-2-i}+c_{n-2-i,m-1})^q \PP^{a_i}(d_{i,m-1})\in R_{n-2}$$
and $c_{n-1-i,m}=\PP^{q^{n-3-i}}(c_{n-2-i,m-1})\in R_{n-2-i}$ for $0<i<m$. This completes the proof of parts (a), (b) and (c).   \qed

\subsection{Proof of Part (d)}
The proof is by induction on $m$. For $m=2$, the result follows from  Lemmas \ref{u2lem} and \ref{u2dlem}.

Suppose $m>2$.
By induction, $u_{m-1}=M(0,m-1)+\delta_{0,m-1}$ and $u_{m-1}d_{i,m-1}=M(i,m-1)+\delta_{i,m-1}$.
Substituting into the expression for $u_m$ from part (c) gives
\begin{eqnarray*}
u_m&=&(\xi_{n-1}+c_{n-1,m})(M(0,m-1)+\delta_{0,m-1})^q\\
     &&+\sum_{i=1}^{m-1}(-1)^i(\xi_{n-1-i}+c_{n-1-i,m})(M(i,m-1)+\delta_{i,m-1})^q\\
     &=&\sum_{i=0}^{m-1}(-1)^i\xi_{n-1-i}M(i,m-1)^q+\sum_{i=0}^{m-1}(-1)^i\xi_{n-1-i}\delta_{i,m-1}^q\\
     &&+\sum_{i=0}^{m-1}(-1)^ic_{n-1-i,m}(M(i,m-1)^q+\delta_{i,m-1}^q).
\end{eqnarray*}     
Observe that $M(0,m)=\sum_{i=0}^{m-1}(-1)^i\xi_{n-1-i}M(i,m-1)^q$ and define
$$\delta_{0,m}=\sum_{i=0}^{m-1}(-1)^i\xi_{n-1-i}\delta_{i,m-1}^q
     +\sum_{i=0}^{m-1}(-1)^ic_{n-1-i,m}(M(i,m-1)^q+\delta_{i,m-1}^q).$$
Then $u_m=M(0,m)+\delta_{0,m}$ with $\delta_{0,m}\in R_{n-1}$.  
Since $c_{j,m}\in R_{j-1}$ and $\wdeg(c_{j,m})=\wdeg(\xi_{j})=q^{j-1}+1$, 
we see that $\nu(c_{j,m})>1$.
Hence $\nu(M(i,m-1))=1+q+\cdots+q^{m-2}$ and $\nu(\delta_{i,m-1})>1+q+\cdots+q^{m-2}$,
we have $\nu(\delta_{0,m})>1+q+\cdots+q^{m-1}$.

Apply $\PP^{e(i,m)}$ to $u_m$ to get 
$u_md_{i,m}=\PP^{e(i,m)}(M(0,m))+\PP^{e(i,m)}(\delta_{0,m})$.
Define $\delta_{i,m}:=u_m d_{i,m}-M(i,m)$ so that $u_m d_{i,m}=M(i,m)+\delta_{i,m}$, and observe that $\delta_{i,m}\in R_{n}$.
Using Lemma \ref{nu_st_lem}, $$\nu(\PP^{e(i,m)}(\delta_{0,m}))\geq \nu(\delta_{0,m})>1+q+\cdots+q^{m-1}.$$
Using Lemma \ref{nu_min_lem}, $\nu(\PP^{e(i,m)}(M(0,m)-M(i,m))>1+q+\cdots+q^{m-1}$.
Therefore $\nu(\delta_{i,m})>1+q+\cdots +q^{m-1}$.

It is clear that using the weighted revlex order on $R_{n}$, 
$$\lt(M(0,m))=(-1)^{\lfloor m/2 \rfloor}\xi_{m}^{q^{m-1}+q^{m-2}+\cdots+1}$$ and 
$$\lt(M(i,m))=(-1)^{\lfloor m/2 \rfloor}\xi_{m}^{q^{m-1-i}+\cdots+1}\xi_{m+1}^{q^{m-1}+\cdots+q^{m-i}}.$$
From Lemma \ref{nu_ord_lem}, $\lt(\delta_{i,m})<\lt(M(i,m))$ for $0\leq i\leq m$. 
Therefore $\lt(u_m)=\lt(M(0,m))$ and $\lt(u_m d_{i m})=\lt(M(i,m))$, completing the proof part (d).  \qed

\subsection{Proof of Part (f)}
The proof is by induction on $m$. 
The base case, $m=2$, is given in Section~\ref{base_case}. 
Suppose $m>2$. By induction we have parts (e) and (f) of the lemma for 
$S_{m-1}^{G_{m-1}}$. 
Therefore, for $1\leq i \leq m-1$, we have the relation
$$ 0=u_{m-1}\left(\xi_{n-1-i}^{q^{i-1}}+\gamma_{n-1-i,m-1}^{(i-1)}-\sum_{j=1}^{m-1}(-1)^{j+1}(\xi_{n-1-i-j}^{q^{i-1}}+\gamma_{n-1-i-j,m-1}^{(i-1)})d_{j,m-1}\right)$$
in $R_{n-2}$. Define
$$\gamma^{(i)}:=\sum_{j=0}^{m-1}(-1)^{j}(\xi_{n-1-i-j}^{q^{i-1}}+\gamma_{n-1-i-j,m-1}^{(i-1)})^q u_md_{j+1,m}.$$
By part (c), $u_m d_{j,m}-(u_{m-1}d_{j-1,m-1})^q\xi_{n}\in R_{n-1}$ for $1\leq j\leq m$, using the convention $d_{0,m-1}=1$.
Therefore $\gamma^{(i)}\in R_{n}$. Furthermore $\gamma^{(i)}$ is linear in $\xi_{n}$ with coefficient
$$\left(-\sum_{j=0}^{m-1}(-1)^{j}(\xi_{n-1-i-j}^{q^{i-1}}+\gamma_{n-1-i-j,m-1}^{(i-1)})u_{m-1}d_{j,m-1}\right)^q=0.$$
Hence $\gamma^{(i)}\in R_{n-1}$. From part (a), $\ker(\Phi_{n-1,m-1})=u_mR_{n-1}$. 
Since $u_m$ divides $\gamma^{(i)}$ in $S_m^{G_m}$ and $\gamma^{(i)}$ lies in $R_{n-1}$, we see that $\gamma^{(i)}\in u_mR_{n-1}$ (see Remark~\ref{um_div_rem}). Thus $\gamma^{(i)}/u_m\in R_{n-1}$.

Define an $(m+1)\times(m+1)$ matrix over $R_{n}$ by
$$M_m^{(i)}:=\begin{pmatrix}
\xi_{n-i}^{q^i}&\cdots &\xi_{m-i}^{q^i}\\
&M_m&
\end{pmatrix}.$$
Since row $1$ and row $i+1$ of $M_m^{(i)}$ are equal, $\det(M_m^{(i)})=0$. Therefore, computing the determinant by expanding across the first row gives
$$\sum_{j=0}^m (-1)^j\xi_{n-i-j}^{q^i} M(j,m)=0.$$
Hence
$$\xi_{n-i}^{q^i}M(0,m)=\sum_{j=1}^m (-1)^{j+1}\xi_{n-i-j}^{q^i} M(j,m).$$
From part (d), $u_m d_{j,m}=M(j,m)+\delta_{j,m}$ with $\nu(\delta_{j,m})>1+q+\cdots +q^{m-1}$.
Substituting into the expression for $\gamma^{(i)}$ gives 
\begin{eqnarray*}
\gamma^{(i)}&=&\sum_{j=0}^{m-1}(-1)^{j}\xi_{n-1-i-j}^{q^{i}}M(j+1,m)
+\sum_{j=0}^{m-1}(-1)^{j}(\gamma_{n-1-i-j,m-1}^{(i-1)})^q M(j+1,m)\\
&&+\sum_{j=0}^{m-1}(-1)^{j}(\xi_{n-1-i-j}^{q^{i-1}}+\gamma_{n-1-i-j,m-1}^{(i-1)})^q \delta_{j+1,m}\\
&=&\xi_{n-i}^{q^i}M(0,m)+\sum_{j=0}^{m-1}(-1)^{j}(\gamma_{n-1-i-j,m-1}^{(i-1)})^q M(j+1,m)\\
&&+\sum_{j=0}^{m-1}(-1)^{j}(\xi_{n-1-i-j}^{q^{i-1}}+\gamma_{n-1-i-j,m-1}^{(i-1)})^q \delta_{j+1,m}.
 \end{eqnarray*}
 Therefore
\begin{eqnarray*}
\gamma^{(i)}-\xi_{n-i}^{q^i}u_m&=&-\xi_{n-i}^{q^i}\delta_{0,m}+\sum_{j=0}^{m-1}(-1)^{j}(\gamma_{n-1-i-j,m-1}^{(i-1)})^q M(j+1,m)\\
&&+\sum_{j=0}^{m-1}(-1)^{j}(\xi_{n-1-i-j}^{q^{i-1}}+\gamma_{n-1-i-j,m-1}^{(i-1)})^q \delta_{j+1,m}.
\end{eqnarray*}
By induction, $\nu(\gamma_{j+1,m-1}^{(i-1)})>q^{i-1}$. Therefore $$\nu(\gamma^{(i)}-\xi_{n-i}^{q^i}u_m)> q^i+1+q+\cdots +q^{m-1},$$
giving $\nu(\gamma^{(i)}/u_m-\xi_{n-i}^{q^i})>q^i$. Define $\gamma_{n-i,m}^{(i)}:=\gamma^{(i)}/u_m-\xi_{n-i}^{q^i}$ and, for $m-1-i\leq\ell<n-1-i$,
define $\gamma_{\ell+1,m}^{(i)}:=(\gamma_{\ell+1,m-1}^{(i-1)})^q$. Then
$$\gamma^{(i)}/u_m=\sum_{j=0}^{m-1}(-1)^{j}(\xi_{n-1-i-j}^{q^{i}}+\gamma_{n-1-i-j,m}^{(i)}) d_{j+1,m}$$
and
\begin{eqnarray*}
\xi_{n-i}^{q^i}&=&\gamma^{(i)}/u_m-\gamma_{n-i,m}^{(i)}\\
&=&\left(\sum_{j=0}^{m-1}(-1)^{j}(\xi_{n-1-i-j}^{q^{i}}+\gamma_{n-1-i-j,m}^{(i)}) d_{j+1,m}\right)-\gamma_{n-i,m}^{(i)}\\
&=&\left(\sum_{j=1}^{m}(-1)^{j+1}(\xi_{n-i-j}^{q^{i}}+\gamma_{n-i-j,m}^{(i)}) d_{j,m}\right)-\gamma_{n-i,m}^{(i)}
\end{eqnarray*}
with $\nu(\gamma_{k,m}^{(i)})>q^i$, as required.  \qed

\section{Generation Over The Steenrod Algebra}

Let $\A$ denote the algebra of operations on $S_m$ generated by the Steenrod operations with the product given by composition.
\begin{thm} \label{st_alg_gen}
    $S_m^{G_m}$ is generated as an $\A$-algebra by 
    the two elements $\xi_1$ and $d_{1,m}$.
\end{thm}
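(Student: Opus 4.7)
The plan is to check that every element of the minimal generating set of $\invring$ from Corollary~\ref{orthogonal minimality}, namely $\xi_0,\xi_1,\ldots,\xi_{n-2}$ together with $d_{1,m},d_{2,m},\ldots,d_{m,m}$, lies in the $\A$-subalgebra $A'$ of $\invring$ generated by $\xi_0$ and $d_{1,m}$.

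The $\xi_i$'s are handled immediately by Corollary~\ref{com_st}: extracting the coefficient of $t$ in $\PP(t)(\xi_0)$ gives $\PP^1(\xi_0)=\xi_1$, and for $i\geq 1$ the coefficient of $t^{q^i}$ in $\PP(t)(\xi_i)$ gives $\PP^{q^i}(\xi_i)=\xi_{i+1}$. Iterating these two identities produces every $\xi_i$ from $\xi_0$ alone, so $\xi_0,\xi_1,\ldots,\xi_{n-2}\in A'$.

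For the $d_{j,m}$ with $j\geq 2$ I would proceed by induction on $j$. For the inductive step, apply $\PP^{q^{n-1-j}}$ to both sides of $u_m d_{j-1,m}=\PP^{e(j-1,m)}(u_m)$. On one hand, the Cartan identity gives
\[
\PP^{q^{n-1-j}}(u_m d_{j-1,m}) \;=\; u_m\,\PP^{q^{n-1-j}}(d_{j-1,m}) + \sum_{k\geq 1}\PP^k(u_m)\,\PP^{q^{n-1-j}-k}(d_{j-1,m}).
\]
On the other hand, since $q^{n-1-j}<q\cdot e(j-1,m)$ the Adem relation applies to $\PP^{q^{n-1-j}}\PP^{e(j-1,m)}$; a short Lucas-theorem computation on $(q-1)e(j-1,m)-1=q^{n-1}-q^{n-j}-1$ shows that the $c=0$ coefficient is $\pm 1\pmod p$, giving
\[
\PP^{q^{n-1-j}}\PP^{e(j-1,m)}(u_m)\;=\;\pm u_m d_{j,m} + \sum_{c\geq 1}(\ast)\,\PP^{e(j,m)-c}\PP^{c}(u_m).
\]
Theorem~\ref{oplus_thm_v2}(a) tells us $\PP^k(u_m)=0$ for $0<k<q^{m-1}$ and $\PP^k(u_m)/u_m\in R_{n-2}$ for $q^{m-1}\leq k<q^{n-2}$; the same factorisation governs the $\PP^c(u_m)$ in the Adem correction. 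After dividing through by $u_m$ one obtains an identity in $\invring$ of the form
\[
\PP^{q^{n-1-j}}(d_{j-1,m}) \;=\; \pm d_{j,m} + Q,
\]
where $Q$ is a polynomial in $\xi_0,\ldots,\xi_{n-2}$ and in Steenrod operations applied to $d_{1,m},\ldots,d_{j-1,m}$. That no $d_{k,m}$ with $k\geq j$ appears in $Q$ follows from a degree count: both sides have degree $e(j,m)(q-1)$ and $\deg d_{k,m}=e(k,m)(q-1)$ is strictly increasing in $k$. By induction $d_{1,m},\ldots,d_{j-1,m}\in A'$, and $A'$ is closed under $\A$, so both $Q$ and $\PP^{q^{n-1-j}}(d_{j-1,m})$ lie in $A'$; solving for $d_{j,m}$ places it in $A'$ as well, closing the induction.

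The main obstacle will be controlling the Adem correction terms for $c\geq q^{m-1}$ and the Cartan terms for $k\geq q^{m-1}$: each must, after further Cartan expansion and division by $u_m$, be expressed as a polynomial in the $\xi_i$'s and the lower $d_{k,m}$'s without reintroducing $d_{j,m}$ or a higher $d$. The tool that makes this tractable is the freeness of $\invring$ over $\field_q[\HH]$ with the block basis $\B$ from Theorem~\ref{oplus_thm_v2}(f), which pins down the module decomposition of every invariant and lets the coefficient of $d_{j,m}$ on the right-hand side be identified.
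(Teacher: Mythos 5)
Your route is genuinely different from the paper's, and the obstacle you flag at the end is a real gap, not a cosmetic one.

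For the $\xi_i$'s you and the paper agree. For the $d_{j,m}$'s you both apply $\PP^{q^{n-1-j}}$ to $d_{j-1,m}$ (the paper writes $\PP^{q^{m-i-1}}$ but a degree count shows the exponent must be $q^{n-i-1}$, so this is a typo there), but the mechanism for identifying the $d_{j,m}$-component of the result diverges. The paper reduces modulo the ideal $I=\langle x_1,\dots,x_m\rangle$, where $d_{i-1,m}$ becomes $\pm d_{i-1}(W_1)^{q^{m-1}}$, invokes Wilkerson's formula $\PP^{q^{m-i}}(d_{i-1}(W_1))=-d_i(W_1)$ to get $\PP^{q^{n-1-i}}(d_{i-1,m})\equiv_I\pm d_i(W_1)^{q^{m-1}}$, and then a degree count on the minimal generating set forces $\PP^{q^{n-1-i}}(d_{i-1,m})=\pm d_{i,m}+f$ with $f\in\field_q[\xi_0,\dots,\xi_{m-1},d_{1,m},\dots,d_{i-1,m}]$. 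Your alternative is to apply an Adem relation to $\PP^{q^{n-1-j}}\PP^{e(j-1,m)}(u_m)$; the $k=0$ Adem coefficient is indeed $\pm1\pmod p$ by Lucas, so you obtain $u_m\bigl(\PP^{q^{n-1-j}}(d_{j-1,m})\mp d_{j,m}\bigr)$ equal to a difference of Adem and Cartan corrections.

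The gap is in the step ``after dividing through by $u_m$ one obtains $\dots Q$ a polynomial in $\xi_0,\ldots,\xi_{n-2}$ and in Steenrod operations applied to $d_{1,m},\ldots,d_{j-1,m}$.'' This is true for the Cartan corrections, since there $k\leq q^{n-1-j}<q^{n-2}$ so Theorem~\ref{oplus_thm_v2}(a) gives $\PP^k(u_m)/u_m\in R_{n-2}$. It is \emph{not} immediate for the Adem corrections: each is of the form $\PP^{e(j,m)-c}\bigl(\PP^c(u_m)\bigr)/u_m$, and after writing $\PP^c(u_m)=u_m r_c$ and Cartan-expanding, you encounter $\PP^l(u_m)/u_m$ for $l$ up to $e(j,m)-c$, which can exceed $q^{n-2}$; Theorem~\ref{oplus_thm_v2}(a) says nothing about those quotients, so they are not obviously in $R_{n-2}$, nor obviously in your $A'$. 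Your last paragraph gestures at the block basis to fix this, but that gesture is doing all the work: what is actually needed is the observation that $\deg\bigl(\PP^l(u_m)/u_m\bigr)=l(q-1)<\deg d_{j,m}$ for every $l\leq e(j,m)-c$ with $c\geq q^{m-1}$, which together with Corollary~\ref{orthogonal minimality} forces $\PP^l(u_m)/u_m$ to be a polynomial in $\xi_0,\dots,\xi_{n-2}$ and $d_{1,m},\dots,d_{j-1,m}$ (no $d_{j,m}$ can appear for degree reasons, and products of two or more $d$'s are out of degree range). With that degree argument supplied the Adem term is tamed and the total $d_{j,m}$-coefficient is $\pm1$; without it the identity $\PP^{q^{n-1-j}}(d_{j-1,m})=\pm d_{j,m}+Q$ is unjustified. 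Note that once you supply this degree argument, the Adem relation is no longer pulling its weight --- the same degree count applied directly to $\PP^{q^{n-1-j}}(d_{j-1,m})$ together with the mod-$I$ computation already pins down the $d_{j,m}$-coefficient, which is why the paper's proof dispenses with Adem relations entirely.
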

\begin{proof}
    Let $C$ denote the smallest algebra, closed under the action of the
    Steenrod algebra and containing $\{\xi_1,d_{1,m}\}$.
    Clearly $C \subseteq S_m^{G_m}$.  
    Since $\PP^{q^{i-1}}(\xi_{i})=\xi_{i+1}$ we see that $\xi_{i+1} \in C$ for all $i \geq 1$.
    It remains to show that $d_{i,m} \in C$ for $i=1,2,3,\dots,m$.
    We proceed by induction.  
     Suppose then that
    $d_{1,m},d_{2,m},\dots,d_{i-1,m} \in C$ and consider $F:=\PP^{q^{n-i-1}}(d_{i-1,m})$ of degree $q^{n-1}-q^{n-1-i}$.
    Recall, from the proof of Theorem~\ref{ortho_hsop_thm} that 
    $d_{i-1,m}  \equiv_I \pm d_{i-1}(W_m)^{q^{m-1}}$
    where $I$ is the ideal 
    $I=\langle x_1,x_2,\ldots,x_m\rangle$, $W_m={\rm Span}_{\field_q}\{y_m,\ldots,y_1\}$,
    and $d_{i-1}(W_m)$ is the $(i-1)^\text{th}$ Dickson invariant in the variables
    $y_m,\ldots,y_1$. 
    Since $\PP^{q^{m-i}}(d_{i-1}(W_m)) = -d_i(W_m)$ (see 
    \cite{Wilkerson-primDickinva:83}) it follows that 
    $F \equiv_I \PP^{q^{n-i-1}}(d_{i-1}(W_1)^{q^{m-1}}) = -d_i(W_1)^{q^{m-1}} \neq 0$.
    But the only invariants of degree $q^{n-1}-q^{n-1-i}$ which do not vanish on $W_m$
    are scalar multiples of $d_{i,m}$.  Hence $F = \pm d_{i,m} + f$ where 
    $f \equiv_I 0$ and 
    $f \in \field_q[\xi_1,\ldots,\xi_{m},d_{1,m},\ldots,d_{i-1,m}] \subseteq C$.  
    This shows that $d_{i,m} \in C$ as required.
\end{proof}

Here is an alternate proof of the theorem.

\begin{proof}
  As in the proof of Theorem \ref{ortho_hsop_thm}, let $I$ denote the ideal in $S_m$ generated by $\{x_1, \ldots, x_m\}$. 
  Let $J$ denote the ideal in $S_m^{G_m}$ generated by 
  $\{\xi_1, \ldots, \xi_{n-1}\}$. Since $J \subset I$, the inclusion of
  $S_m^{G_m}$ into $S_m$ induces a map from $S_m^{G_m}/J$ to $S_m/I$. Since both $I$ and $J$ are closed under the action of the Steenrod algebra, this is a map of $\A$-algebras. Let $D$ denote the subalgebra of $S_m$ generated by $\{d_{1,m},\ldots, d_{m,m}\}$, let 
  $\widetilde{D}$ denote the image of $D$ in $S_m^{G_m}/J$ and let $\overline{D}$ denote the image of $D$ in $S_m/I$. 
  It follows from Theorem~\ref{ortho_hsop_thm} and its proof that 
  $D\cap J\subset D\cap I=0$. Therefore,
  as algebras $D$, $\widetilde{D}$, and $\overline{D}$ are isomorphic. Hence the map from $S_m^{G_m}/J$ to $S_m/I$ restricts to an isomorphism from $\widetilde{D}$ to $\overline{D}$ as $\A$-algebras. 
  From the proof of Theorem~\ref{ortho_hsop_thm}, $\overline{D}$ is isomorphic to $(\field_q[W_m^*]^{\gl{m}{q}})^{q^{m-1}}$. Since $d_1(W_m)$ generates $\field_q[W_m^*]^{\gl{m}{q}}$ (see \cite{Wilkerson-primDickinva:83}), $\overline{d_1}$ generates $\overline{D}$ and $\tilde{d_1}$ generates  $\widetilde{D}$, as $\A$-algebras. 
  From this we see that $\{\xi_1, d_{1,m}\}$ generates $S_m^{G_m}$ as an $\A$-algebra.  
\end{proof}

\begin{cor} $\invring$ is generated as an $\A$-algebra by 
$\{\xi_1,\R(N(y_1)^{q-1})\}$.
\end{cor}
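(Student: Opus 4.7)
The plan is to reduce to Theorem~\ref{st_alg_gen}, which already says $\invring$ is generated over $\A$ by $\{\xi_0, d_{1,m}\}$. So it suffices to show that $d_{1,m}$ lies in the $\A$-subalgebra $C \subseteq \invring$ generated by $\xi_0$ and $\R(N(y_1)^{q-1})$. First, since $\PP^{q^i}(\xi_i) = \xi_{i+1}$ by Corollary~\ref{com_st}, iterating Steenrod operations starting from $\xi_0$ produces every $\xi_i$ for $i \geq 0$, so $\field_q[\xi_0,\ldots,\xi_{n-2}] \subset C$. The task therefore reduces to exhibiting $d_{1,m}$ as a scalar multiple of $\R(N(y_1)^{q-1})$ modulo $\field_q[\xi_0,\ldots,\xi_{n-2}]$.

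For this I would expand $\R(N(y_1)^{q-1})$ in the block basis from Theorem~\ref{oplus_thm_v2}(f): $\invring = \bigoplus_{\gamma \mid \Gamma} \field_q[\HH]\gamma$ with $\HH = \{\xi_0,\ldots,\xi_{m-1}, d_{1,m},\ldots,d_{m,m}\}$, and write $\R(N(y_1)^{q-1}) = \sum_\gamma p_\gamma \gamma$ with $p_\gamma \in \field_q[\HH]$. The element has weighted degree $(q-1)q^{n-2} = \deg(d_{1,m})$, whereas $\deg(d_{i,m}) = q^{n-1} - q^{n-i-1} > (q-1)q^{n-2}$ for every $i \geq 2$, and every nontrivial block $\gamma$ contributes at least $q^m + 1$ to the degree. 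A straightforward degree count then forces $p_\gamma \in \field_q[\xi_0,\ldots,\xi_{m-1}]$ for $\gamma \neq 1$, and $p_1 = c \cdot d_{1,m} + r$ with $c \in \field_q$ and $r \in \field_q[\xi_0,\ldots,\xi_{m-1}]$ (no $d_{i,m}$ for $i \geq 2$ fits, and no product $d_{1,m}^2$, $d_{1,m}\xi_i$, etc., fits either). Hence
$$\R(N(y_1)^{q-1}) = c \cdot d_{1,m} + Q, \qquad Q \in \field_q[\xi_0,\ldots,\xi_{n-2}].$$

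To show $c \neq 0$, I would compare lex lead terms. Theorem~\ref{ren_thm} gives $\lt(\R(N(y_1)^{q-1})) = y_1^{(q-1)q^{n-2}}$, and Lemma~\ref{lex_lt_lem} gives $\lt(d_{1,m}) = y_1^{(q-1)q^{n-2}}$. On the other hand, every nonzero polynomial in $\xi_0,\ldots,\xi_{n-2}$ has its lex lead term divisible by $x_1$, because $\lt(\xi_i) = y_1^{q^i} x_1$. If $c$ were zero, then $\lt(\R(N(y_1)^{q-1}))$ would be divisible by $x_1$, contradicting Theorem~\ref{ren_thm}. Therefore $c \neq 0$ and
$$d_{1,m} = c^{-1}\bigl(\R(N(y_1)^{q-1}) - Q\bigr) \in C,$$
so by Theorem~\ref{st_alg_gen} we conclude $C = \invring$.

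The one step that requires care rather than inspiration is the degree bookkeeping in the block-basis decomposition, where one must rule out not only $d_{i,m}$ with $i \geq 2$ but also higher powers of $d_{1,m}$ and mixed products $d_{1,m}\xi_i$; this is entirely mechanical once the formula $\deg(d_{i,m}) = q^{n-1} - q^{n-i-1}$ is in hand. The lead-term step to establish $c \neq 0$ is the only genuinely nontrivial idea, and it is already packaged by Theorem~\ref{ren_thm}.
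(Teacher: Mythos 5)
Your proof is correct and follows essentially the same route as the paper: use the degree data from Theorem~\ref{oplus_thm_v2} to write $\R(N(y_1)^{q-1}) = c\,d_{1,m} + Q$ with $Q \in \field_q[\xi_0,\ldots,\xi_{n-2}]$, then compare lex lead terms via Theorem~\ref{ren_thm} and Lemma~\ref{lex_lt_lem} to conclude $c \neq 0$ (the paper pins down $c=1$), and finish by Theorem~\ref{st_alg_gen}. One small imprecision in your $c\neq 0$ step: the assertion that every nonzero polynomial in $\xi_0,\ldots,\xi_{n-2}$ has lex lead term divisible by $x_1$ does not follow just from $\lt(\xi_i)=y_1^{q^i}x_1$, since distinct monomials $\xi^\alpha$ can share a lead term and cancel; the cleaner observation, which the paper uses by writing $\delta\in I$, is that every nonconstant polynomial in the $\xi_i$ lies in $I=\langle x_1,\ldots,x_m\rangle$, so none of its monomials is a pure power of $y_1$, and in particular $y_1^{(q-1)q^{n-2}}$ cannot occur in $Q$.
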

\begin{proof}
Since $\deg(\R(N(y_1)^{q-1}))=\deg(d_{1,m})<\deg(d_{i,m})$ for $i>1$,
it follows from Theorem \ref{oplus_thm_v2}, that 
$\R(N(y_1)^{q-1})=cd_{1,m}+\delta$ for some scalar $c$ and $\delta\in R_{n-1}$.
By Theorem~\ref{ren_thm} and Lemma~\ref{lex_lt_lem} 
$$\lt(\R(N(y_1)^{q-1}))=y_1^{(q-1)q^{n-2}}=\lt(d_{1,m})$$
using the lexicographic order on $S_m$. Since $\delta\in I$, we see that $c=1$,
and the result follows from Theorem \ref{st_alg_gen}.
\end{proof}

\section*{Acknowledgement}
  The computer algebra package Magma \cite{magma:97} was very useful in this work.  It was
used to test and confirm many hypotheses in low dimensions for small primes.
We are grateful to Karl Dilcher and Keith Taylor (Canadian Mathematical Society book editors) and Donna Chernyk (Springer) for encouraging us to work on this problem.
We thank the referee for a careful reading and constructive criticism.

\end{document}